\numberwithin{equation}{section}
\newtheorem{theorem}{Theorem}[section]
\newtheorem{lemma}[theorem]{Lemma}
\newtheorem{problem}[theorem]{Problem}
\newtheorem{corollary}[theorem]{Corollary}
\newtheorem{statement}[theorem]{Statement}
\newtheorem{fact}[theorem]{Fact}
\newtheorem*{ind}{Theorem \ref{t:counter}}
\newtheorem*{arb1}{Theorem \ref{t:arb1}}
\newtheorem*{arb2}{Theorem \ref{t:arb2}}
\newtheorem*{MDS2}{Theorem \ref{t:MDS2}}
\newtheorem*{cMDS}{Theorem \ref{t:cMDS}}
\newtheorem*{c:c}{Corollary \ref{c:c}}
\theoremstyle{definition}
\newtheorem{remark}[theorem]{Remark}
\newtheorem{definition}[theorem]{Definition}
\DeclareMathOperator{\Cov}{Cov}
\newcommand{\N}{\mathbb{N}}
\newcommand{\R}{\mathbb{R}}
\renewcommand{\P}{\mathop{\mathbb{P{}}}\nolimits}
\newcommand{\E}{\mathop{\mathbb{E{}}}\nolimits}
\newcommand{\eps}{\varepsilon}
\newcommand{\iH}{\mathcal{H}}
\newcommand{\iF}{\mathcal{F}}
\begin{document}

\title{Baum--Katz type theorems with exact threshold}

\author{Rich\'ard Balka}

\address{Department of Mathematics, University of British Columbia, and Pacific Institute for the Mathematical Sciences, Vancouver, BC V6T 1Z2, Canada}

\thanks{The first author was supported by the National Research, Development and Innovation Office--NKFIH, Grant 104178.}

\email{balka@math.ubc.ca}

\author{Tibor T\'om\'acs}

\address{Eszterh\'azy K\'aroly University of Applied Sciences, Institute of Mathematics and Informatics, Eger, H-3300 Hungary}

\email{tomacs.tibor@uni-eszterhazy.hu}

\subjclass[2010]{Primary: 60F15, 60G42, 60G50; Secondary: 60F10.}

\keywords{complete convergence, Marcinkiewicz--Zygmund strong law of large numbers, rate of convergence, independent random variables, martingale difference sequences}

\begin{abstract} Let $\{X_n\}_{n\geq 1}$ be either a sequence of arbitrary random variables, or a martingale difference sequence, or a centered sequence with a suitable level of negative dependence. We prove Baum--Katz type theorems by only assuming that the variables $X_n$ satisfy a uniform moment bound condition. We also prove that this condition is best possible even for sequences of centered, independent random variables. This leads to Marcinkiewicz--Zygmund type strong laws of large numbers with estimate for the rate of convergence.   
\end{abstract}

\maketitle 

 \section{Introduction} 
 
 \subsection{Motivation and related results} 
 Let $\{X_n\}_{n\geq 1}$ be a sequence of random variables, we always assume that they are defined on the same probability space. For all $n\in \N^+$ let $S_n=\sum_{i=1}^n X_i$ and $M_n=\max_{1\leq i\leq n} |S_i|$. For some positive parameters $p,r$ consider the statement 
 \begin{equation} \label{eq:main} \sum_{n=1}^{\infty} n^{p/r-2} \P(M_n>\eps n^{1/r})<\infty \quad \textrm{for all } \eps>0, \tag{M}
 \end{equation}
 and the weaker claim
 \begin{equation} \label{eq:main2} \sum_{n=1}^{\infty} n^{p/r-2} \P(|S_n|>\eps n^{1/r})<\infty \quad \textrm{for all } \eps>0. \tag{S}
 \end{equation}
 The main goal of the paper is to prove \eqref{eq:main} or \eqref{eq:main2} 
 under different conditions. We may assume that $0<r<2$ and $p\geq r$. Indeed, if $p<r$ then \eqref{eq:main} trivially holds. If $p\geq r\geq 2$ then by the central limit theorem the sum in \eqref{eq:main2} is divergent for all $\eps>0$ even if $\{X_n\}_{n\geq 1}$ is an i.i.d.\ sequence with mean zero and finite variance. 
 
 We cite the known results in this subsection, for the new ones see Subsection~\ref{ss:res}. First consider the classical results for i.i.d.\ random variables. Following Hsu and Robbins \cite{HR} we say that a sequence $\{X_n\}_{n\geq 1}$ \emph{converges completely} to $0$ if 
 \[\sum_{n=1}^{\infty} \P(|X_n|>\eps)<\infty \quad \textrm{for all } \eps>0.\]
 By the Borel--Cantelli Lemma this implies that $X_n\to 0$ almost surely, but the converse is not necessarily true. If $\{X_n\}_{n\geq 1}$ is a centered i.i.d.\ sequence of random variables then $S_n/n\to 0$ almost surely by the strong law of large numbers. Under what conditions does $S_n/n$ converge completely to $0$? Hsu and Robbins \cite{HR} showed that $\E(X_1^2)<\infty$ is sufficient, and Erd\H{o}s \cite{E,E2} proved that it is necessary.   
 
\begin{theorem}[Hsu--Robbins--Erd\H{o}s strong law of large numbers] Let $\{X_n\}_{n\geq 1}$ be a sequence of centered i.i.d.\ random variables. Then the following are equivalent:
\begin{enumerate}[(i)]
 \item \label{E1} $\E(X_1^2)<\infty$, 
 \item \label{E2} $\sum_{n=1}^{\infty} \P(|S_n|>\eps n)<\infty$ for all $\eps>0$. 
\end{enumerate} 
\end{theorem}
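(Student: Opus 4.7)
The plan is to dispatch the two implications by classical but distinct arguments: (i) $\Rightarrow$ (ii) by truncation combined with a fourth-moment Markov estimate, and (ii) $\Rightarrow$ (i) by symmetrisation followed by a L\'evy-type maximal inequality. Throughout, I would use the layer-cake identity $\E X_1^2 = 2\int_0^\infty u\,\P(|X_1|>u)\,du$, which makes $\E X_1^2<\infty$ equivalent to $\sum_{n\ge 1} n\,\P(|X_1|>n)<\infty$.

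For (i) $\Rightarrow$ (ii), I would truncate at level $n$: set $Y_i = X_i \mathbf{1}_{\{|X_i|\le n\}}$ and split
\[
\P(|S_n|>\eps n) \le n\,\P(|X_1|>n) + \P\Bigl(\Bigl|\sum_{i=1}^n Y_i\Bigr|>\eps n\Bigr).
\]
The first piece is summable since $\sum_n n\,\P(|X_1|>n)\le C\,\E X_1^2<\infty$. Because $\E X_i=0$, dominated convergence gives $\E Y_i\to 0$, so $|\sum_i \E Y_i|\le \eps n/2$ for large $n$ and the second piece is controlled by $\P(|\sum_i(Y_i-\E Y_i)|>\eps n/2)$. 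Applying Markov's inequality with the fourth moment and the standard i.i.d.\ expansion
\[
\E\Bigl(\sum_{i=1}^n (Y_i-\E Y_i)\Bigr)^4 \le C\bigl(n\,\E Y_1^4 + n^2(\E Y_1^2)^2\bigr),
\]
the $n^2$-piece yields $O(n^{-2})$ (summable since $\E Y_1^2\le \E X_1^2$), and the $n$-piece yields $Cn^{-3}\E Y_1^4$, which Fubini collapses to $C\,\E\bigl[X_1^4\sum_{n\ge |X_1|}n^{-3}\bigr]\le C\,\E X_1^2<\infty$.

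For the harder direction (ii) $\Rightarrow$ (i), I would symmetrise. Let $(X_i')$ be an independent copy of $(X_i)$ and set $X_i^s=X_i-X_i'$. The elementary bound $\P(|S_n^s|>2\eps n)\le 2\,\P(|S_n|>\eps n)$ shows that (ii) passes to the symmetrised sequence, and $\E X_1^2<\infty$ is equivalent to $\E(X_1^s)^2<\infty$, so I may assume the $X_i$ are symmetric. L\'evy's maximal inequality then gives $\P(\max_{k\le n}|S_k|>\eps n)\le 2\,\P(|S_n|>\eps n)$, and since $|X_i|\le 2\max_{k\le n}|S_k|$,
\[
\P\bigl(\max_{i\le n}|X_i|>2\eps n\bigr) \le 2\,\P(|S_n|>\eps n).
\]
Writing $p_n=\P(|X_1|>2\eps n)$, the i.i.d.\ assumption rewrites the left side as $1-(1-p_n)^n$, so assumption (ii) forces $\sum_n[1-(1-p_n)^n]<\infty$. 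In particular $np_n\to 0$, whence $1-(1-p_n)^n\ge \tfrac12 np_n$ for large $n$; hence $\sum_n n\,\P(|X_1|>2\eps n)<\infty$, i.e.\ $\E X_1^2<\infty$. The main obstacle is precisely this reverse direction: a tail estimate on $S_n$ does not directly encode moments of $X_1$, and the crucial conceptual step is using L\'evy's inequality to convert a tail bound on the \emph{sum} into a tail bound on the \emph{maximum} of the summands, where the i.i.d.\ structure cleanly reduces everything to the elementary sum $\sum_n np_n$.
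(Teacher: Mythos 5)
The paper does not actually prove this theorem: it is stated in Section 1.1 as a classical result and attributed to Hsu--Robbins for \eqref{E1}$\Rightarrow$\eqref{E2} and to Erd\H{o}s for the converse, with citations to \cite{HR}, \cite{E}, \cite{E2}. So there is no in-paper proof to compare against, and your proposal must be judged on its own merits.

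Your argument is correct and follows well-trodden ground. The direction \eqref{E1}$\Rightarrow$\eqref{E2} is essentially the original Hsu--Robbins strategy: truncate $X_i$ at level $n$, absorb the tail via $\sum_n n\,\P(|X_1|>n)\lesssim \E X_1^2$, recentre using $\E X_1=0$, and control the remaining sum by a fourth-moment Markov bound. The Fubini step for the diagonal term and the $n^{-2}$ bound for the cross term both check out, using $(\E Y_1^2)^2\le(\E X_1^2)^2$ and $\E Z_1^4\lesssim\E Y_1^4$. For \eqref{E2}$\Rightarrow$\eqref{E1} you use symmetrisation plus L\'evy's maximal inequality to pass from a tail bound on $S_n$ to one on $\max_{i\le n}|X_i|$, then the identity $\P(\max_i|X_i|>t)=1-(1-p_n)^n$ with $p_n=\P(|X_1^s|>t)$ and the elementary asymptotic $1-(1-p_n)^n\ge\tfrac12 np_n$ once $np_n\to 0$; together with the layer-cake identity this gives $\E(X_1^s)^2<\infty$, and the standard comparison of a variable with its symmetrisation (via a median) transfers this to $\E X_1^2<\infty$. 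This is a clean and more modern route than Erd\H{o}s's original combinatorial argument; it is the approach one finds in, e.g., Chung or Durrett. The one thing worth spelling out if you write this up fully is the last transfer step: from $\E(X_1^s)^2<\infty$ one first gets $\E(X_1-m)^2<\infty$ for a median $m$ via $\P(|X_1-m|>t)\le 2\P(|X_1^s|>t)$, and then $\E X_1^2<\infty$; ``equivalent'' glosses over this small but genuine argument.
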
 
 
A more general classical theorem is the following.
  
\begin{theorem}[Baum--Katz, Chow] \label{t:BK} 
Let $\{X_n\}_{n\geq 1}$ be a sequence of i.i.d.\ random variables. Let $0<r<2$ and let $p\geq r$. The following statements are equivalent:
\begin{align}
\label{al1} &\E(|X_1|^p)<\infty \textrm{ and if }  p\geq 1 \textrm{ then } \E(X_1)=0, \tag{1}\\
\label{al2} &\sum_{n=1}^{\infty} n^{p/r-2}
\P(|S_n|>\eps n^{1/r})<\infty \quad \textrm{for all } \eps>0, \tag{2} \\ 
\label{al3} &\sum_{n=1}^{\infty} n^{p/r-2}
\P(M_n>\eps n^{1/r})<\infty \quad \textrm{for all } \eps>0. \tag{3} 
\end{align}    
\end{theorem}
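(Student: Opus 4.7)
The implication \eqref{al3}$\Rightarrow$\eqref{al2} is immediate from $|S_n|\leq M_n$; the content is in \eqref{al1}$\Rightarrow$\eqref{al3} and \eqref{al2}$\Rightarrow$\eqref{al1}, where as noted we may assume $0<r<2$ and $p\geq r$.

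For \eqref{al1}$\Rightarrow$\eqref{al3}, I would truncate at the natural scale $n^{1/r}$. Set $Y_{n,i}=X_i\mathbf{1}_{\{|X_i|\leq n^{1/r}\}}$ and $\mu_n=\E Y_{n,1}$, and use the inclusion
\[
\{M_n>\eps n^{1/r}\}\subset\Bigl\{\max_{i\leq n}|X_i|>n^{1/r}\Bigr\}\cup\Bigl\{\max_{k\leq n}\Bigl|\sum_{i=1}^k(Y_{n,i}-\mu_n)\Bigr|>\tfrac{\eps}{2}n^{1/r}\Bigr\},
\]
which is valid as soon as $n|\mu_n|\leq (\eps/2)n^{1/r}$. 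The first event, via a union bound, contributes $\sum_n n^{p/r-1}\P(|X_1|>n^{1/r})$, finite by the layer-cake identity and $\E|X_1|^p<\infty$. The bias satisfies $n|\mu_n|=o(n^{1/r})$: for $p<1$ this is direct from $\E|X_1|^p<\infty$, and for $p\geq 1$ one rewrites $\mu_n=-\E(X_1\mathbf{1}_{\{|X_1|>n^{1/r}\}})$ (this is where $\E X_1=0$ enters) and uses the same moment bound. The fluctuation term is handled by Kolmogorov's maximal inequality, reducing the estimate to $n\E Y_{n,1}^2\leq n\int_0^{n^{1/r}}2t\,\P(|X_1|>t)\,dt$; weighting by $n^{p/r-2}$, Fubini and the layer cake yield a convergent series for $p<2$, while for $p\geq 2$ the second-moment bound must be replaced by a Rosenthal- or Marcinkiewicz--Zygmund-type estimate on $\E|\sum(Y_{n,i}-\mu_n)|^p$.

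For \eqref{al2}$\Rightarrow$\eqref{al1}, I would symmetrize. Let $\{X_n'\}$ be an independent copy and set $\widetilde X_n=X_n-X_n'$, $\widetilde S_n=\sum_{i=1}^n\widetilde X_i$; the symmetric sequence $\{\widetilde X_n\}$ inherits \eqref{al2}. L\'evy's inequality for symmetric sums gives
\[
\P\Bigl(\max_{i\leq n}|\widetilde X_i|>2\eps n^{1/r}\Bigr)\leq 2\P\bigl(|\widetilde S_n|>\eps n^{1/r}\bigr),
\]
and the i.i.d.\ identity $\P(\max_{i\leq n}|\widetilde X_i|>t)=1-(1-\P(|\widetilde X_1|>t))^n$, comparable to $\min\{n\P(|\widetilde X_1|>t),1\}$, converts the hypothesis into $\sum_n n^{p/r-1}\P(|\widetilde X_1|>\eps n^{1/r})<\infty$, i.e.\ $\E|\widetilde X_1|^p<\infty$ by another layer-cake. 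The weak symmetrization inequality then lifts this to $\E|X_1-m|^p<\infty$ for any median $m$ of $X_1$, hence $\E|X_1|^p<\infty$. When $p\geq 1$, hypothesis \eqref{al2} implies $S_n/n^{1/r}\to 0$ in probability, which combined with the Kolmogorov strong law pins down $\E X_1=0$.

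The delicate step is \eqref{al1}$\Rightarrow$\eqref{al3}: the three contributions above must all be summable against the common weight $n^{p/r-2}$, which forces the truncation scale to be exactly $n^{1/r}$, the centering step to split at $p=1$ (where $\E X_1=0$ enters), and the maximal-inequality step to split at $p=2$ (simple variance vs.\ Rosenthal). Any coarser choice causes one of the three pieces to diverge.
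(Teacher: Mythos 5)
The paper does not prove Theorem~\ref{t:BK}; it only records it and cites Baum--Katz~\cite{BK} and Chow~\cite{C}, so there is no in-paper argument to compare against. Your outline is the standard one from the literature: truncate at level $n^{1/r}$, separate the large-value event $\{\max_i|X_i|>n^{1/r}\}$ from the centered fluctuation term, and bound the latter by Kolmogorov's maximal inequality for $p<2$ and by a Rosenthal-type $q$-th moment estimate (with $q>\max\{p,\,2(p-r)/(2-r)\}$) for $p\geq 2$; for the converse, symmetrize and apply L\'evy's inequality. The pieces fit together as you say, and the passage from $\sum_n n^{p/r-2}\min\{n\P(|\widetilde X_1|>\eps n^{1/r}),1\}<\infty$ to $\sum_n n^{p/r-1}\P(|\widetilde X_1|>\eps n^{1/r})<\infty$, which you elide, needs only a short block argument using that the tail is non-increasing in $n$; that is routine.

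There is, however, a genuine gap in the final step of your \eqref{al2}$\Rightarrow$\eqref{al1} argument when $r<1\leq p$. In that range $n^{1-1/r}\to 0$, so once $\E|X_1|<\infty$ is known, $S_n/n^{1/r}=(S_n/n)\,n^{1-1/r}\to 0$ almost surely \emph{whatever} $\E X_1$ is, and the strong law cannot pin down the mean. In fact nothing can: for $0<r<1\leq p$ the constant sequence $X_n\equiv 1$ satisfies \eqref{al2} and \eqref{al3} (the event $\{n>\eps n^{1/r}\}$ occurs only for finitely many $n$ when $r<1$) yet violates the centering clause in \eqref{al1}. The paper itself uses exactly this example, just before the statement of Theorem~\ref{t:arb1}, to show that \eqref{eq:main} fails for $X_n\equiv 1$ precisely when $r\geq 1$. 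So the equivalence as written only holds for $1\leq r<2$; for $0<r<1\leq p$ the clause ``$\E(X_1)=0$'' should be dropped from \eqref{al1}, after which your proof (with the $p\geq 1$ centering recovery omitted) is complete. For $1\leq r<2$ your centering step is sound, since there $n^{1-1/r}$ is bounded below, and $\sum_n n^{p/r-2}\P(|S_n|>\eps n^{1/r})<\infty$ with $p/r-2\geq -1$ forces $\liminf_n\P(|S_n|>\eps n^{1/r})=0$, which is incompatible with $S_n/n\to\mu\neq 0$ almost surely.
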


The equivalence of \eqref{al1} and \eqref{al2} in the case $r=p=1$ is due to Spitzer \cite{S}, the case $r=1$, $p=2$ is the Hsu--Robbins--Erd\H{o}s strong law, while 
the general case is due to Baum and Katz \cite{BK}. For the equivalence of \eqref{al1} and \eqref{al3} see Chow \cite{C}. 

The next theorem is the Marcinkiewicz--Zygmund strong law of large numbers, see \cite{MZ}. Note that the case $p=1$ dates back to Kolmogorov~\cite{Ko} and includes the classical strong law of large numbers. 

\begin{theorem}[Marcinkiewicz--Zygmund strong law of large numbers] \label{t:MZ} For an i.i.d.\ sequence of random variables $\{X_n\}_{n\geq 1}$ and $0<p<2$ the following are equivalent:
\begin{enumerate}[(i)]
\item $\E(|X_1|^p)<\infty$ and if $p\geq 1$ then $\E(X_1)=0$,
\item $\lim_{n\to \infty} n^{-1/p}S_n=0$ almost surely.
\end{enumerate}
\end{theorem}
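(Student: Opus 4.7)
The plan is to deduce Theorem~\ref{t:MZ} from the Baum--Katz--Chow theorem (Theorem~\ref{t:BK}) by specializing $r=p$, which is allowed since $0<p<2$ implies $0<r<2$ and trivially $p\geq r$.

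\textbf{Direction (i) $\Rightarrow$ (ii).} Assuming $\E(|X_1|^p)<\infty$ (and $\E(X_1)=0$ when $p\geq 1$), Theorem~\ref{t:BK} gives
\[
\sum_{n=1}^{\infty} n^{-1}\P\bigl(M_n>\eps n^{1/p}\bigr)<\infty \quad \text{for all } \eps>0.
\]
To extract almost sure convergence, I would pass to the dyadic subsequence $n_k=2^k$. For $2^k\leq n<2^{k+1}$ we have $M_n\geq M_{2^k}$ and $n^{1/p}\leq 2^{(k+1)/p}$, so
\[
\P\bigl(M_{2^k}>\eps\, 2^{(k+1)/p}\bigr)\leq \P\bigl(M_n>\eps n^{1/p}\bigr),
\]
and summing $n^{-1}$ over $n\in[2^k,2^{k+1})$ picks up a factor of order $\log 2$, so the finiteness of the Baum--Katz sum forces $\sum_k \P(M_{2^k}>\eps\, 2^{k/p})<\infty$ for every $\eps>0$ (after absorbing the constant $2^{1/p}$ into $\eps$). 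The Borel--Cantelli lemma then yields $M_{2^k}/2^{k/p}\to 0$ almost surely. For arbitrary $n$ with $2^k\leq n<2^{k+1}$, monotonicity of $M_n$ gives $M_n/n^{1/p}\leq 2^{1/p}\cdot M_{2^{k+1}}/2^{(k+1)/p}\to 0$ a.s., so $|S_n|/n^{1/p}\leq M_n/n^{1/p}\to 0$ a.s.

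\textbf{Direction (ii) $\Rightarrow$ (i).} Writing $X_n=S_n-S_{n-1}$, the identity
\[
\frac{X_n}{n^{1/p}}=\frac{S_n}{n^{1/p}}-\left(\frac{n-1}{n}\right)^{1/p}\frac{S_{n-1}}{(n-1)^{1/p}}
\]
together with $(n-1)^{1/p}/n^{1/p}\to 1$ shows $X_n/n^{1/p}\to 0$ a.s. Since the events $\{|X_n|>\eps n^{1/p}\}$ are independent, the second Borel--Cantelli lemma forces $\sum_n\P(|X_n|>\eps n^{1/p})<\infty$ for all $\eps>0$; using that the $X_n$ are identically distributed, this is equivalent (by the standard layer-cake/integral comparison) to $\E(|X_1|^p)<\infty$. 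When $p\geq 1$, this moment bound allows us to invoke the classical strong law of large numbers, so $S_n/n\to\E(X_1)$ a.s.; but (ii) combined with $n^{1/p}/n\leq 1$ shows $S_n/n\to 0$ a.s., forcing $\E(X_1)=0$ (the case $p=1$ being immediate).

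The main obstacle is the first direction: one has to convert the complete--convergence statement of Baum--Katz into genuine almost sure convergence, and the natural route is the dyadic--subsequence plus maximal--inequality trick described above. The reverse direction is more routine, the only subtlety being the use of independence to invoke the converse Borel--Cantelli.
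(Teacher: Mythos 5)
The paper does not prove Theorem~\ref{t:MZ} itself; it cites it as a classical result from~\cite{MZ} and then immediately presents Statement~\ref{st:1} to explain the connection between Theorem~\ref{t:BK} and almost sure convergence. Your forward direction is exactly the mechanism of Statement~\ref{st:1}~\eqref{eq:1st}: specialize to $r=p$, extract the dyadic subsum $\sum_k \P(M_{2^k}>\eps 2^{k/p})<\infty$ from $\sum_n n^{-1}\P(M_n>\eps n^{1/p})<\infty$, apply Borel--Cantelli, and then use monotonicity of $M_n$ to fill in the gaps between dyadic indices. Your converse direction (telescoping $X_n/n^{1/p}\to 0$, converse Borel--Cantelli with independence, layer-cake to get $\E(|X_1|^p)<\infty$, then Kolmogorov's SLLN to force $\E(X_1)=0$ when $p\geq 1$) is the standard argument. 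Both directions are correct. One small remark worth keeping in mind: deriving the Marcinkiewicz--Zygmund SLLN from Baum--Katz--Chow is not circular because the known proofs of Theorem~\ref{t:BK} proceed via truncation and moment estimates rather than through the SLLN, but this dependence on the (deeper and historically later) Baum--Katz--Chow machinery is worth flagging whenever this route is chosen.
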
 

The following statement explains the connection between 
Theorem~\ref{t:BK} and the Marcinkiewicz--Zygmund strong law of large numbers and its rate of convergence. 
It is formulated for arbitrary sequences of random variables, see \cite[Remarks~1~and~2]{DM} and see also \cite[Lemma~4]{L} for the proof of part \eqref{eq:2nd}.

\begin{statement} \label{st:1} Let $\{X_n\}_{n\geq 1}$ be an arbitrary sequence of random variables, let $0<r<2$ and let $p\geq r$. Assume that \eqref{eq:main} holds. 
\begin{enumerate}[(i)]
\item \label{eq:1st} 
If $p=r$ then for all $\eps>0$ we have \[\sum_{n=1}^{\infty} \P(M_{2^n}>\eps 2^{n/p})<\infty,\] 
which implies that $\lim_{n\to \infty} n^{-1/p} S_n=0$ almost surely.  
\item \label{eq:2nd} If $p>r$ then for all $\eps>0$ we have
\[\sum_{n=1}^{\infty} n^{p/r-2} \P\left(\sup_{k\geq n} k^{-1/r}|S_k|>\eps\right)<\infty.\]
Since the above probabilities are non-increasing, we obtain that
\[\P\left(\sup_{k\geq n} k^{-1/r}|S_k|>\eps\right)=o(n^{1-p/r}) \quad \textrm{ as } n\to \infty.\] 
\end{enumerate}
\end{statement}

In contrast to Theorems~\ref{t:BK} and~\ref{t:MZ}, we will not assume independence or identical distributions in the following. Now we summarize the known results in this direction, which requires some technical definitions. 

The following theorem partly generalizes the Marcinkiewicz--Zygmund strong law of large numbers, see Stout~\cite[Theorem~3.3.9]{Stout} and \cite[Corollary 3.3.5]{Stout}. It is based on Chung~\cite{Ch}
in the case of independent variables and is implicitly contained in 
Lo\`eve~\cite{Lo}. Its proof uses a conditional three series theorem \cite[Theorem~2.8.8]{Stout}. 

\begin{theorem}[Stout] \label{t:Stout}
	Let $f\colon [0,\infty)\to \R^+$ be a non-decreasing function with \[\sum_{n=1}^{\infty} \frac{1}{f(2^n)}<\infty.\] 
	Let $0<p<2$. Let $\{X_n\}_{n\geq 1}$ be  
	\begin{enumerate}[(i)]
		\item an arbitrary sequence of random variables if $0<p<1$ and suppose that $x\mapsto x^{p-1}f(x)$ is non-increasing,
		\item a martingale difference sequence if $1\leq p<2$ and assume that $x\mapsto x^{p-2}f(x)$ is non-increasing.
	\end{enumerate} 
	If $\sup_{n\geq 1} \E(|X_n|^p f(|X_n|))<\infty$ then $\lim_{n\to \infty} n^{-1/p}S_n=0$ almost surely. 
\end{theorem}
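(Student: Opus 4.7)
\emph{Plan.} I would follow the classical Marcinkiewicz--Zygmund truncation strategy, calibrated to the weight $f$. Set $\iF_n=\sigma(X_1,\ldots,X_n)$ (with $\iF_0$ trivial), $Y_n=X_n\mathbf{1}_{\{|X_n|\leq n^{1/p}\}}$, and $U_n=X_n-Y_n$. First I would discard the tails: by Markov's inequality together with the moment assumption,
\[
\P(|X_n|>n^{1/p})\leq \frac{\E(|X_n|^p f(|X_n|))}{n\,f(n^{1/p})}\leq \frac{C}{n\,f(n^{1/p})},
\]
and a dyadic regrouping over $n\in[2^k,2^{k+1})$ bounds $\sum_n 1/(n\,f(n^{1/p}))$ by a constant multiple of $\sum_k 1/f(2^{k/p})$, which in turn is comparable to $\sum_k 1/f(2^k)<\infty$ since $p\in(0,2)$ contributes only a bounded multiplicity in the reindexing. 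Borel--Cantelli then yields $X_n=Y_n$ eventually a.s., so it suffices to prove $n^{-1/p}\sum_{k=1}^n Y_k\to 0$ a.s.

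\emph{Case $0<p<1$.} No centering is needed. The monotonicity of $x\mapsto x^{p-1}f(x)$ implies that on $\{|X_n|\leq n^{1/p}\}$ one has $|X_n|^{1-p}\leq n^{(1-p)/p}f(|X_n|)/f(n^{1/p})$. Multiplying by $|X_n|^p$ and taking expectations yields $\E|Y_n|\leq C n^{(1-p)/p}/f(n^{1/p})$, hence $\sum \E|Y_n|/n^{1/p}\leq C\sum 1/(n\,f(n^{1/p}))<\infty$. Therefore $\sum Y_n/n^{1/p}$ converges absolutely a.s., and Kronecker's lemma delivers $n^{-1/p}\sum_{k=1}^n Y_k\to 0$, which coincides with $n^{-1/p}S_n$ up to a finite correction from the tails.

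\emph{Case $1\leq p<2$.} Split $Y_n=Z_n+\E(Y_n\mid\iF_{n-1})$ with $Z_n:=Y_n-\E(Y_n\mid\iF_{n-1})$, an $\iF_n$-martingale difference sequence. The monotonicity of $x\mapsto x^{p-2}f(x)$ applied analogously gives $\E(Z_n^2)\leq \E(Y_n^2)\leq C n^{(2-p)/p}/f(n^{1/p})$, so $\sum \E(Z_n^2)/n^{2/p}<\infty$; Doob's $L^2$ martingale inequality then ensures a.s.\ convergence of $\sum Z_n/n^{1/p}$, and Kronecker handles the martingale part. The main obstacle is the conditional centering: since $\E(X_n\mid\iF_{n-1})=0$, one has $\E(Y_n\mid\iF_{n-1})=-\E(X_n\mathbf{1}_{\{|X_n|>n^{1/p}\}}\mid\iF_{n-1})$, and on $\{|X_n|>n^{1/p}\}$ the hypothesis $p\geq 1$ gives $|X_n|\leq n^{-(p-1)/p}|X_n|^p$, while $f(|X_n|)\geq f(n^{1/p})$ by monotonicity of $f$; combining these produces
\[
\E\bigl|\E(Y_n\mid\iF_{n-1})\bigr|\leq \frac{C\,n^{-(p-1)/p}}{f(n^{1/p})}.
\]
Dividing by $n^{1/p}$ yields once more the bound $C/(n\,f(n^{1/p}))$, so $\sum \E(Y_n\mid\iF_{n-1})/n^{1/p}$ converges absolutely a.s.\ by Fubini, and a final Kronecker step completes the proof. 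This centering analysis is essentially the content of the conditional three-series theorem cited in the statement.
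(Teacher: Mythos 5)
Your proof is correct, and the paper does not prove this result itself but rather cites Stout \cite[Theorem~3.3.9, Corollary~3.3.5]{Stout}, noting that the cited proof rests on a conditional three-series theorem. Your argument — truncation at $n^{1/p}$, Borel--Cantelli for the tail (using Fact~\ref{f:1} to relate $\sum 1/(nf(n^{1/p}))$ to $\sum 1/f(2^n)$), $L^2$-bounded martingale convergence for the centered truncated part, absolute convergence for the conditional means, and Kronecker's lemma throughout — is precisely that strategy, with the two monotonicity hypotheses on $x^{p-1}f(x)$ and $x^{p-2}f(x)$ entering in exactly the places where the classical Marcinkiewicz--Zygmund argument uses the corresponding moment tail estimates.
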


\begin{definition}
We say that the sequence $\{X_n\}_{n\geq 1}$ is 
\emph{weakly dominated} by a random variable $X$ if there is a constant $C\in \R^+$ such that for all $n\in \N^+$ and $x>0$ we have
\begin{equation} \label{WD} 
\P(|X_n|>x)\leq C\P(|X|>x), \tag{WD}
\end{equation}
and \emph{weakly mean dominated} if  
\begin{equation} \label{WMD} 
\frac{1}{n} \sum_{k=1}^{n} \P(|X_k|>x)\leq C \P(|X|>x) \tag{WMD}
\end{equation}
for some $C\in \R^+$ and for all $n\in \N^+$ and $x>0$. 
\end{definition}
The following definition was introduced by Alam and Saxena \cite{AS} and Joag-Dev and Proschan~\cite{JP}. 

\begin{definition} 
	A finite family of random variables 
	$\{X_i: 1\leq i\leq n\}$ is called \emph{negatively associated} (NA) if for every pair of disjoint subsets $A_1$, $A_2\subset \{1,\dots, n\}$ we have 
	\[\Cov(f_1(X_i: i\in A_1),\, f_2(X_j: j\in A_2))\leq 0\]
	for all coordinatewise non-decreasing functions $f_1$ and $f_2$ for which the covariance exists. An infinite family of random variables is NA if every finite subfamily is NA. 
\end{definition}

The following definition is due to Lehmann~\cite{Le}. 

\begin{definition}
Two random variables $X$ and $Y$ are called \emph{negatively quadrant dependent} (NQD) if for all $x,y\in \R$ we have 
\[ \P(X\leq x,\, Y\leq y)\leq \P(X\leq x)\P(Y\leq y).\]
\end{definition}

Every independent sequence is NA, and each pairwise independent sequence is pairwise NQD. It is proved in \cite{JP} that every NA sequence is pairwise NQD. 
For the next theorem see Kuczmaszewska \cite[Theorem~2.1]{K} and its proof.

\begin{theorem}[Kuczmaszewska] \label{t:K}
	Let $0<r<2$ and $p>r$. Assume that the sequence 
	$\{X_n\}_{n\geq 1}$ is weakly mean dominated by a random variable $X$ satisfying $\E(|X|^p)<\infty$. If $p>1$ assume that $\{X_n\}_{n\geq 1}$ is centered and NA. Then \eqref{eq:main} holds. 
\end{theorem}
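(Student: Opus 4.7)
The plan is to follow the standard Baum--Katz truncation strategy. Fix $\eps>0$ and for each $n\in\N^+$ set $Y_{n,k}=X_k\mathbf{1}_{\{|X_k|\le n^{1/r}\}}$ for $1\le k\le n$. The union bound gives
\[
\P(M_n>\eps n^{1/r})\le\sum_{k=1}^n\P(|X_k|>n^{1/r})+\P\Bigl(\max_{1\le j\le n}\Bigl|\sum_{k=1}^j Y_{n,k}\Bigr|>\eps n^{1/r}\Bigr),
\]
and the weighted series coming from the first term is immediately controlled by \eqref{WMD} together with a Fubini interchange:
\[
\sum_{n=1}^\infty n^{p/r-2}\sum_{k=1}^n\P(|X_k|>n^{1/r})\le C\sum_{n=1}^\infty n^{p/r-1}\P(|X|>n^{1/r})\asymp\E|X|^p<\infty.
\]

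For the truncated partial sums, I would first show that the deterministic drift $\sum_{k=1}^n\E Y_{n,k}$ is $o(n^{1/r})$. If $p\le1$, one uses the bound $\E(|X|\mathbf{1}_{\{|X|\le a\}})\le a^{1-p}\E|X|^p$ together with \eqref{WMD}; if $p>1$, one uses $\E X_k=0$ to rewrite $\E Y_{n,k}=-\E(X_k\mathbf{1}_{\{|X_k|>n^{1/r}\}})$ and estimates analogously. Either way one obtains $|\sum_k\E Y_{n,k}|\le Cn^{1+(1-p)/r}\E|X|^p$, which is $o(n^{1/r})$ since $p>r$. Hence, after replacing $\eps$ by $\eps/2$, it suffices to bound $\P(\max_j|\sum_{k\le j}\widetilde Y_{n,k}|>\eps n^{1/r}/2)$, where $\widetilde Y_{n,k}=Y_{n,k}-\E Y_{n,k}$.

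When $p>1$ the NA property is preserved under the monotone truncation, so $\{\widetilde Y_{n,k}\}_{k=1}^n$ is NA and a Shao/Rosenthal-type maximal inequality gives, for every $q\ge2$,
\[
\E\max_{1\le j\le n}\Bigl|\sum_{k\le j}\widetilde Y_{n,k}\Bigr|^q\le C_q\Bigl(\sum_{k=1}^n\E|\widetilde Y_{n,k}|^q+\Bigl(\sum_{k=1}^n\E\widetilde Y_{n,k}^2\Bigr)^{q/2}\Bigr).
\]
Choosing $q$ sufficiently large, concretely with $q>p$ and $q>\max\{2,\,2(p-r)/(2-r)\}$, Markov with exponent $q$ followed by a Fubini interchange shows that the ``sum'' contribution reduces to a constant multiple of $\E|X|^p$, while the ``variance'' contribution has exponent $p/r-2+q(r-p)/(2r)<-1$ and is therefore summable against the weight $n^{p/r-2}$. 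When $p\le1$ no Rosenthal-type inequality is needed: from $\max_j|\sum_{k\le j}Y_{n,k}|\le\sum_k|Y_{n,k}|$, Markov with an exponent $q\in(p,1]$ combined with the subadditivity $(\sum a_k)^q\le\sum a_k^q$ and one more Fubini interchange yields a bound of order $\E|X|^p<\infty$. The main technical obstacle is the case $p>1$, where the balance between the Rosenthal ``sum'' and ``variance'' contributions under \eqref{WMD}, together with the exact range of admissible exponents $q$, constitutes the delicate heart of the proof.
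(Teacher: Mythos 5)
The paper does not actually prove Theorem~\ref{t:K}; it is cited verbatim from Kuczmaszewska \cite[Theorem~2.1]{K}, and the only in-house comment is the remark immediately after, which notes how to extend Kuczmaszewska's argument to $p=1$. So your sketch is being assessed on its own, not against a proof the paper supplies. Your overall plan — truncate at $n^{1/r}$, control the tail sum via \eqref{WMD} and a Fubini interchange to reduce to $\E|X|^p$, show the drift is $o(n^{1/r})$ using $p>r$, and then hit the truncated partial sums with Markov at a carefully chosen exponent $q$ (with a Rosenthal/Shao maximal inequality when $p>1$) — is indeed the standard Baum--Katz route and the exponent bookkeeping is essentially right. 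But there are two genuine gaps.

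First, the truncation you chose does not preserve negative association. You set $Y_{n,k}=X_k\mathbf 1_{\{|X_k|\le n^{1/r}\}}$ and assert that ``the NA property is preserved under the monotone truncation.'' But $x\mapsto x\,\mathbf 1_{\{|x|\le a\}}$ is not monotone: it jumps from $a$ down to $0$ at $x=a$ and from $0$ down to $-a$ at $x=-a$. NA is only stable under coordinatewise monotone maps, so you cannot apply a Shao/Rosenthal maximal inequality to $\{\widetilde Y_{n,k}\}$ as the argument stands. The fix is to use the clipped truncation $Y_{n,k}=(-n^{1/r})\vee(X_k\wedge n^{1/r})$, which is non-decreasing in $X_k$ (so NA passes to $\{Y_{n,k}\}_k$ and hence to the centered $\{\widetilde Y_{n,k}\}_k$), still satisfies $|Y_{n,k}|\le\min(|X_k|,n^{1/r})$ so all your moment estimates go through unchanged, and is still compatible with the initial union bound because on $\{\max_{k\le n}|X_k|\le n^{1/r}\}$ one still has $Y_{n,k}=X_k$.

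Second, the borderline case $p=1$ is simply not covered. You propose to use Markov with an exponent $q\in(p,1]$; when $p=1$ this interval is empty, and the computation with $q=1$ gives $\sum_n n^{-1}\E(|X|\wedge n^{1/r})$, which diverges as soon as $\E|X|>0$. This is not an oversight that your framework handles automatically: the paper itself flags $p=1$ as a separate case and indicates (in the remark after the theorem statement) that one must instead apply Markov at some $q>1$ at a specific step of Kuczmaszewska's argument. You need either to exclude $p=1$ explicitly or to supply a separate argument there. A smaller point: the closed-form exponent you display for the ``variance'' contribution, $p/r-2+q(r-p)/(2r)$, is the one obtained from the bound $\E(\min(|X|,a)^2)\lesssim a^{2-p}\E|X|^p$, which is only valid for $p\le 2$; for $p>2$ one has $\E Y_{n,k}^2=O(1)$ and the exponent is $p/r-2+q(1/2-1/r)$ instead. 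Your stated requirement $q>\max\{2,2(p-r)/(2-r)\}$ happens to be exactly what the corrected exponent demands, so the conclusion is unaffected, but the displayed formula is only correct on part of the range.
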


\begin{remark} If $p=1$ we obtain the above theorem by applying the Markov inequality $\P(Z>t)\leq \E(Z^q)/t^q$ at the beginning of \cite[(2.4)]{K} for some $q>1$. 
\end{remark}

For the following theorem see Tan, Wang, and Zhang~\cite[Theorems~1.1,~1.2]{TWZ}, and see also Gan and Chen \cite[Theorem~2.2]{GC} for the second part of the statement. Note that it is strongly based on the pioneering work of Wu~\cite{W}.   

\begin{theorem}[Wu,~Tan--Wang--Zhang,~Gan--Chen] \label{t:W} Let $0<r\leq p$ and $1\leq p<2$. Let $\{X_n\}_{n\geq 1}$ be a centered, pairwise NQD sequence which is weakly dominated by a random variable $X$ with $\E(|X|^p)<\infty$. Then \eqref{eq:main2} holds. If $r\neq p$ then \eqref{eq:main} holds.
\end{theorem}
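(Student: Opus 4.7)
My plan is the classical truncation-plus-second-moment method, exploiting that for pairwise NQD variables the cross-covariances of monotone functions are non-positive. I take the monotone (two-sided) truncation $Y_{n,i}=(-n^{1/r})\vee X_i\wedge n^{1/r}$ and set $Z_{n,i}=X_i-Y_{n,i}$; since $x\mapsto (-n^{1/r})\vee x\wedge n^{1/r}$ is non-decreasing, $\{Y_{n,i}\}_{i=1}^n$ remains pairwise NQD, so $\Cov(Y_{n,i},Y_{n,j})\le 0$ for $i\neq j$. The centering identity $\sum_{i=1}^n\E Y_{n,i}=-\sum_{i=1}^n\E Z_{n,i}$ follows from $\E X_i=0$; using $\E(|X|\mathbf{1}_{\{|X|>n^{1/r}\}})\le n^{-(p-1)/r}\E(|X|^p\mathbf{1}_{\{|X|>n^{1/r}\}})$ together with \eqref{WD}, one obtains $|\sum_{i=1}^n \E Y_{n,i}|=o(n^{1/r})$ in both regimes $p=r$ and $p>r$. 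Setting $T_n=\sum_i(Y_{n,i}-\E Y_{n,i})$, for $n$ large enough
\[
\P(|S_n|>\eps n^{1/r})\le \P\bigl(|T_n|>\tfrac{\eps}{3}n^{1/r}\bigr)+\sum_{i=1}^n\P(|X_i|>n^{1/r}).
\]

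The tail term contributes $\sum_n n^{p/r-2}\cdot n\,\P(|X|>n^{1/r})\le C\E(|X|^p)<\infty$ after using \eqref{WD} and the standard equivalence between the resulting series and the $p$-th moment. For $\P(|T_n|>cn^{1/r})$ I apply Chebyshev and drop the non-positive cross-covariances, giving $\P(|T_n|>cn^{1/r})\le Cn^{-2/r}\sum_i\Var Y_{n,i}$. The estimate $\Var Y_{n,i}\le C\E(X^2\mathbf{1}_{\{|X|\le n^{1/r}\}})+Cn^{2/r}\P(|X|>n^{1/r})$ from \eqref{WD}, followed by a Fubini rearrangement along the shells $\{(k-1)^{1/r}<|X|\le k^{1/r}\}$ and the inequality $\sum_{n\ge k}n^{(p-2)/r-1}\le Ck^{(p-2)/r}$ (valid because $p<2$), majorizes the weighted sum by $C\E(|X|^{p-2}X^2)=C\E(|X|^p)<\infty$. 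This proves \eqref{eq:main2}.

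For \eqref{eq:main} under the additional assumption $p>r$, the plan is to upgrade Chebyshev to a H\'ajek--R\'enyi-type maximal inequality for pairwise NQD summands of the form $\E\bigl(\max_{1\le k\le n}|T_k^{(n)}|^2\bigr)\le C\sum_{i=1}^n\Var Y_{n,i}$ with $T_k^{(n)}=\sum_{i\le k}(Y_{n,i}-\E Y_{n,i})$; such an inequality is available in the pairwise NQD literature. Coupled with the previous variance and tail bounds, it reduces the maximal statement to exactly the same calculation as above.

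The main obstacles I anticipate are technical. First, the truncation must genuinely preserve pairwise NQD, which is why I use the two-sided monotone cut $(-n^{1/r})\vee X\wedge n^{1/r}$ rather than the more convenient but non-monotone indicator truncation $X\mathbf{1}_{\{|X|\le n^{1/r}\}}$. Second, locating a sufficiently strong maximal inequality for pairwise NQD partial sums for the second assertion: the apparent need for the strict inequality $p>r$ there seems to be an artifact of how one passes from $|S_n|$ to $M_n$ in the NQD setting, and is consistent with the dyadic-only conclusion in part (i) of Statement~\ref{st:1}.
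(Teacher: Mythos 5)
The paper does not actually prove Theorem~\ref{t:W}; it is quoted from Tan--Wang--Zhang~\cite{TWZ} and Gan--Chen~\cite{GC}, and even the closely analogous Theorem~\ref{t:pind} is deferred to those references with the remark that its proof is ``similar.'' So there is no in-paper proof to compare with, and I am assessing your argument on its own terms.

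Your treatment of \eqref{eq:main2} is sound and standard: the two-sided monotone cut does keep the truncated sequence pairwise NQD, hence the cross-covariances of $Y_{n,i}-\E Y_{n,i}$ are non-positive; the mean-shift bound $\bigl|\sum_i\E Y_{n,i}\bigr|=o(n^{1/r})$ follows for $r\le p$ from $\E(|X|\mathbf 1_{\{|X|>a\}})\le a^{1-p}\,\E(|X|^p\mathbf 1_{\{|X|>a\}})$ together with \eqref{WD}; and the tail and variance Fubini calculations give $\E(|X|^p)$ up to constants. (One small point worth making explicit: \eqref{WD} controls tails only, so $\E(\min(|X_i|,a)^2)\le C\,\E(\min(|X|,a)^2)$ should be obtained via $\E(\min(|X_i|,a)^2)=\int_0^{a^2}\P(|X_i|>\sqrt t)\,\mathrm dt$; you are implicitly doing this.)

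The gap is in the \eqref{eq:main} half. The maximal inequality you invoke, $\E\bigl(\max_{1\le k\le n}|T_k^{(n)}|^2\bigr)\le C\sum_{i=1}^n\Var Y_{n,i}$ with $C$ independent of $n$, is not available for pairwise NQD sums (nor even for pairwise uncorrelated sums). The strongest general tool at this level of dependence is a Rademacher--Menshov type inequality with a $(\log n)^2$ loss, $\E\bigl(\max_{1\le k\le n}|T_k^{(n)}|^2\bigr)\lesssim(\log n)^2\sum_{i=1}^n\Var Y_{n,i}$, and this is what Wu~\cite{W} and his successors use. If you substitute that into your calculation, the Fubini step produces $\E\bigl(|X|^p(\log^+|X|)^2\bigr)$, which is not finite under the hypothesis $\E(|X|^p)<\infty$. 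Thus the passage from $|S_n|$ to $M_n$ is exactly the hard part here, not an artifact: the references handle the $(\log n)^2$ loss by a more careful block decomposition that genuinely uses the slack $p>r$. A quick sanity check that your tool is too strong: if the constant-free maximal inequality held, your argument would prove \eqref{eq:main} for $r=p$ as well, whereas the theorem (and its proof in \cite{TWZ}, \cite{GC}) deliberately restricts the maximal conclusion to $r\ne p$.
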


Now we state the last two results of this subsection, which consider martingale difference sequences (MDS). Miao, Yang, and Stoica \cite[Theorems~2.1\,(1),~2.3]{MYS} proved the following theorem about the case $1\leq p<2$. 

\begin{theorem}[Miao--Yang--Stoica] \label{t:KMYS} 
Assume that $0<r\leq p<2$. Let $\{X_n\}_{n\geq 1}$ be a MDS which is weakly mean dominated by $X$. Property \eqref{eq:main} holds if
\begin{enumerate}[(i)]
\item \label{K2} $r=p=1$ and $\E(|X| \log^+|X|)<\infty$,
\item \label{K3} $1<p<2$ and $\E(|X|^p)<\infty$. 
\end{enumerate}
\end{theorem}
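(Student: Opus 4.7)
The plan is to prove \eqref{eq:main} via a truncation argument tailored to the MDS structure. For each $n\in \N^+$ and $k\leq n$, set $Y_{nk} = X_k \mathbf{1}(|X_k| \leq n^{1/r})$ and $Z_{nk} = X_k - Y_{nk}$. Since $\{X_k\}$ is a MDS, $\E(Y_{nk}\mid \iF_{k-1}) = -\E(Z_{nk}\mid \iF_{k-1})$, so we may write
\[X_k = \widetilde Y_{nk} - \E(Z_{nk}\mid \iF_{k-1}) + Z_{nk},\]
where $\widetilde Y_{nk} = Y_{nk} - \E(Y_{nk}\mid \iF_{k-1})$ is itself a MDS. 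Denoting by $\widetilde T_i$, $\beta_i$, $\zeta_i$ the partial sums up to index $i$ of the three terms on the right (suppressing the dependence on $n$), we obtain
\[\P(M_n > 3\eps n^{1/r}) \leq \P(\max_i|\widetilde T_i| > \eps n^{1/r}) + \P(\max_i|\beta_i| > \eps n^{1/r}) + \P(\max_i|\zeta_i| > \eps n^{1/r}).\]

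I would bound each piece separately. The tail is immediate: $\{\max_i|\zeta_i| > 0\} \subset \bigcup_{k\leq n} \{|X_k| > n^{1/r}\}$, which by \eqref{WMD} has probability at most $Cn\P(|X| > n^{1/r})$. The bias is controlled by Markov's inequality and conditional Jensen: $\P(\max_i|\beta_i| > \eps n^{1/r}) \leq \eps^{-1}n^{-1/r}\sum_k \E(|X_k|\mathbf{1}(|X_k|>n^{1/r})) \leq C\eps^{-1} n^{1-1/r}\E(|X|\mathbf{1}(|X|>n^{1/r}))$, where the last step uses a layer-cake argument with \eqref{WMD}. Finally, because $|\widetilde Y_{nk}| \leq 2n^{1/r}$ is bounded, Doob's $L^2$ maximal inequality combined with martingale orthogonality yields $\P(\max_i|\widetilde T_i| > \eps n^{1/r}) \leq \eps^{-2}n^{-2/r}\sum_k \E \widetilde Y_{nk}^2 \leq C\eps^{-2} n^{1-2/r}\E(X^2\mathbf{1}(|X|\leq n^{1/r}))$, once more via \eqref{WMD}.

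Multiplying each bound by $n^{p/r-2}$ and summing, Fubini's theorem finishes the job. The tail piece yields $\sum_n n^{p/r-1}\P(|X|>n^{1/r}) \leq C\E|X|^p$. The martingale piece yields $\sum_n n^{(p-2)/r - 1}\E(X^2\mathbf{1}(|X|\leq n^{1/r}))$; since the exponent is $<-1$ whenever $p<2$, swapping sum and expectation gives $C\E(X^2|X|^{p-2}) = C\E|X|^p$ for $1<p<2$, and $C\E|X|$ for $p=r=1$. The bias piece yields $\sum_n n^{(p-1)/r - 1}\E(|X|\mathbf{1}(|X|>n^{1/r}))$; for $1<p<2$ the exponent exceeds $-1$ and Fubini produces $C\E|X|^p$, while for the critical case $p=r=1$ the exponent equals $-1$, so the inner sum $\sum_{n<|X|}1/n$ is of order $\log|X|$, and the bound becomes $C\E(|X|\log^+|X|)$.

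The main obstacle is precisely the bias term in the boundary case $p=r=1$: it is at the critical exponent where the logarithm appears, and this is what forces the strengthened $L\log L$ assumption in case (i) rather than mere integrability. This reflects a genuine gap between MDS and i.i.d.\ behaviour at $p=r=1$ --- Spitzer's theorem needs only $\E|X|<\infty$ --- due to the quantitative weakness of martingale maximal inequalities compared to their independent counterparts.
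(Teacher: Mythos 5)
Your decomposition and estimates are essentially sound, and they form a correct proof of the theorem. One technical point you should patch: the \eqref{WMD} step for the $L^2$ piece does not directly give $\frac{1}{n}\sum_k\E(X_k^2\mathbf{1}(|X_k|\leq a))\leq C\E(X^2\mathbf{1}(|X|\leq a))$. Writing $\E(X_k^2\mathbf{1}(|X_k|\leq a))=\int_0^{a}2x\P(|X_k|>x)\,\mathrm{d}x-a^2\P(|X_k|>a)$ and summing over $k$, what \eqref{WMD} yields is
\[
\frac{1}{n}\sum_{k=1}^n\E\bigl(X_k^2\mathbf{1}(|X_k|\leq a)\bigr)\leq C\E\bigl(X^2\mathbf{1}(|X|\leq a)\bigr)+Ca^2\P(|X|>a),
\]
with an extra term $Ca^2\P(|X|>a)$. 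With $a=n^{1/r}$ this extra term contributes $\sum_n n^{p/r-1}\P(|X|>n^{1/r})\lesssim\E|X|^p<\infty$, so it is absorbed by the same calculation as your pure tail piece; the theorem is unaffected, but the inequality as written is not quite true.

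A note on the comparison: the paper does not prove this theorem itself --- it is quoted from Miao, Yang, and Stoica --- but the paper's own closely related Theorem~\ref{t:MDS} uses the same truncation idea with a slightly different organization. There, the decomposition is two-term: $X_k=Y_{k,n}+Z_{k,n}$, where \emph{both} $Y_{k,n}=X_k\mathbf{1}(|X_k|\leq n^{1/r})-\E(X_k\mathbf{1}(|X_k|\leq n^{1/r})\mid\iF_{k-1})$ and $Z_{k,n}$ are martingale difference arrays, and the tail block is handled in one stroke by Doob's $L^1$ maximal inequality plus the triangle inequality, producing the bound $n^{-1/r}\sum_k\E(|X_k|\mathbf{1}(|X_k|>n^{1/r}))$. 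Your three-term split peels $Z_{k,n}$ into the raw tail and its compensator, bounding the first by a union bound on the event $\{\max_i|\zeta_i|>0\}\subset\bigcup_k\{|X_k|>n^{1/r}\}$ and the second by Markov plus conditional Jensen. The two routes land on equivalent final estimates (indeed $n\P(|X|>n^{1/r})\leq n^{1-1/r}\E(|X|\mathbf{1}(|X|>n^{1/r}))$ for $r\leq 1$), but yours is more elementary in that it avoids the $L^1$ maximal inequality, while the paper's two-term form is marginally more compact and is what naturally accommodates the extra factor $f(|X_n|)$ in the refined moment condition that Theorem~\ref{t:MDS} carries. Your closing remark correctly identifies why the $L\log L$ condition is forced in the boundary case $p=r=1$: it is the compensator term whose critical-exponent Fubini computation generates the logarithm, and this is exactly where the martingale setting is genuinely more demanding than Spitzer's i.i.d.\ theorem.
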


\begin{remark} Note that \eqref{K2} is optimal: Elton \cite{El} proved that if $X$ is a centered random variable with $\E(|X|\log^+ |X|)=\infty$ then there is a martingale difference sequence $\{X_n\}_{n\geq 1}$ such that $X_n$ and $X$ have the same distribution for all $n$ and $S_n/n\to \infty$ as $n\to \infty$ almost surely. Thus \eqref{eq:main} cannot hold by  Statement~\ref{st:1}~\eqref{eq:1st}.
\end{remark}

The above theorem generalizes a result of Dedecker and Merlev\'ede \cite[Theorem~5]{DM} for real valued random variables. In the case $p\geq 2$ a new phenomenon emerges. For the following theorem see the proofs of \cite[Theorems~2.2,~2.4~(3)]{MYS}. 
\begin{theorem}[Miao--Yang--Stoica] \label{t:MYS}
Assume that $0<r<2\leq p$ and $q(r,p)=2(p-r)/(2-r)$. 
Let $\{X_n\}_{n\geq 1}$ be a MDS such that $\sup_{n\geq 1} \E(|X_n|^q)<\infty$. 
\begin{enumerate}[(i)]
\item If $q>q(r,p)$ then \eqref{eq:main} holds. 
\item If $q=q(p,r)$ then there is a MDS $\{X_n\}_{n\geq 1}$ which is weakly dominated by a random variable $X$ satisfying $\E(|X|^q)<\infty$ such that \eqref{eq:main2} does not hold. 	 
\end{enumerate}
\end{theorem}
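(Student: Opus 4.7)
The plan for part (i) is a truncation argument coupled with a martingale $L^{q}$ inequality. Fix $n$, set $y_n = n^{1/r}$, and split each $X_i$ as $X_i = Y_i + Z_i$ with $Y_i = X_i\mathbf{1}_{\{|X_i|\leq y_n\}}$. Since $\{X_i\}$ is a MDS, the centered pieces $\tilde Y_i := Y_i - \E(Y_i\mid \mathcal{F}_{i-1})$ and $\tilde Z_i := Z_i - \E(Z_i\mid \mathcal{F}_{i-1})$ are themselves two MDSs whose sum is $X_i$, so $M_n \leq M_n^{(Y)} + M_n^{(Z)}$ where $M_n^{(Y)}$ and $M_n^{(Z)}$ are the corresponding maxima. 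Since $|\tilde Y_i|\leq 2 y_n$, Doob's inequality followed by the MDS Rosenthal/Burkholder inequality at the exponent~$q$, using conditional Jensen to dominate the predictable quadratic variation term by $n^{q/2-1}\sum_i \E|\tilde Y_i|^q$, yields
\[
\E(M_n^{(Y)})^q \leq C\Bigl(n^{q/2-1}\sum_{i\leq n}\E|\tilde Y_i|^q + \sum_{i\leq n}\E|\tilde Y_i|^q\Bigr) \leq C' n^{q/2},
\]
because $\sup_i \E|X_i|^q \leq K$ and $q\geq 2$. Markov then gives $\P(M_n^{(Y)}>\eps n^{1/r}) \leq C n^{q/2 - q/r}$, and a routine calculation shows that $\sum n^{p/r-2+q/2-q/r}$ converges precisely when $q>q(r,p)$.

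For the jump part, the crude bound $M_n^{(Z)} \leq \sum_{i\leq n}|Z_i| + \sum_{i\leq n}\E(|Z_i|\mid \mathcal{F}_{i-1})$, combined with the Markov-type estimate $\E(|X_i|\mathbf{1}_{\{|X_i|>y_n\}}) \leq K y_n^{1-q}$, gives $\P(M_n^{(Z)} > \eps n^{1/r}) \leq C n^{1 - q/r}$; the associated weighted series converges whenever $q>p$. Since $p\geq 2$ forces $q(r,p) \geq p$, the hypothesis $q>q(r,p)$ controls both contributions, completing (i).

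For part (ii) I plan to construct an independent (hence MDS), symmetric counterexample. The natural template is $\P(X_n = \pm a_n) = p_n/2$, $\P(X_n = 0) = 1-p_n$, with $a_n$ of order $n^{1/r}$ and $p_n$ calibrated so that $\sup_n a_n^q p_n < \infty$; a logarithmic slack in either factor will allow a common tail envelope $X$ with $\E|X|^q<\infty$ to weakly dominate the sequence. The key point is the matching lower bound $\P(|S_n|>\eps n^{1/r}) \gtrsim n^{1-p/r}$ on indices of positive density: noting the identity $q/2-q/r = 1-p/r$ at $q=q(r,p)$, this turns $n^{p/r-2}\P(|S_n|>\eps n^{1/r})$ into at least $1/n$, forcing divergence of the series in \eqref{eq:main2}.

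The main obstacle will be part (ii): the parameters $a_n,p_n$ must simultaneously fit under an $L^q$ envelope and still leave enough mass near the scale $n^{1/r}$ for the partial sums to overshoot $\eps n^{1/r}$ with the sharp frequency. I expect a Paley--Zygmund second-moment argument applied to blocks of indices, or equivalently a direct estimate on the probability that a single uncancelled large jump occurs in $[n/2,n]$, to deliver the required lower bound.
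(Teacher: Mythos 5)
Your part (i) is essentially sound, though the truncation does no work: since $q>q(r,p)\ge p\ge 2$, you can apply the Rosenthal--Burkholder martingale inequality and Doob directly to the original MDS (without splitting into $Y_i,Z_i$) and get $\E M_n^q\lesssim n^{q/2}$, hence $\P(M_n>\eps n^{1/r})\lesssim n^{q/2-q/r}$, and the weighted series converges iff $q>q(r,p)$. Your jump estimate would then be superfluous. The paper's own proof of the sharpened version (Theorem~\ref{t:MDS2}) also avoids truncation; it applies the Burkholder--Davis--Gundy inequality with a carefully engineered convex $g_q(x)\approx x^q f(x)$, because with the optimal $q=q(r,p)$ and only a slowly varying correction $f$ in the moment, the crude $n^{q/2}$ bound is not summable and one must keep track of the $f$-factor. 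For the cited statement with $q>q(r,p)$ your cruder route is perfectly fine.

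Part (ii), however, has a structural gap that no choice of parameters can repair: \textbf{an independent construction cannot work when $p>2$.} This is not a matter of calibration; it is ruled out by the paper's own Theorem~\ref{t:indep} (and is the whole point of Table~\ref{table}). If $\{X_n\}$ is independent, centered, and $\sup_n\E|X_n|^q<\infty$ with $q=q(r,p)>p$ (strict when $p>2$), then $\sup_n\E\bigl(|X_n|^p f(|X_n|)\bigr)<\infty$ for $f(x)=x^{q-p}$, which satisfies $\sum_n 1/f(2^n)<\infty$; by Theorem~\ref{t:indep} statement~\eqref{eq:main}, and hence \eqref{eq:main2}, then holds. So your proposed example would contradict a theorem you are implicitly allowed to use. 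Concretely, in your template $\P(X_n=\pm a_n)=p_n/2$: a single large jump of size $a_n\sim n^{1/r}$ with $a_n^qp_n\lesssim 1$ gives probability $\sim np_n\lesssim n^{1-q/r}$, and $q>p$ makes $\sum n^{p/r-2}\,n^{1-q/r}$ converge; while a CLT/Paley--Zygmund lower bound with many small independent jumps needs $na_n^2p_n\gtrsim n^{2/r}/\log n$, which together with $a_n^qp_n\lesssim 1$ is incompatible with $np_n\to\infty$ for $r<1$ and in general cannot beat the positive result above. The counterexample must exploit genuine martingale dependence: as in the paper's Theorem~\ref{t:cMDS}, set $X_n=Y_nZ_k$ for $4^{k-1}\le n<4^k$, where $Y_n$ are independent Rademachers and $Z_k$ is a single random jump size of order $4^{k(1/r-1/2)}$ shared by the entire dyadic block and taking its large value with probability $\sim 4^{k(1-p/r)}$. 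Conditioning on the rare event $\{Z_k\neq 0\}$, the partial sum is $Z_k$ times a Rademacher walk of length $\sim n$, which reaches $\sim\sqrt n\,Z_k\sim n^{1/r}$ with constant probability; this is precisely what produces the lower bound $\P(|S_n|>n^{1/r})\gtrsim n^{1-p/r}$ you correctly identified as the target. Finally, be careful with the ``logarithmic slack'' idea: inserting a factor $(\log n)^{-(1+\delta)}$ in $p_n$ to force $\E|X|^q<\infty$ for a dominating $X$ simultaneously multiplies your lower bound by $(\log n)^{-(1+\delta)}$, turning the harmonic series into a convergent one; making the weak-domination requirement compatible with divergence is a genuinely delicate point and cannot be waved away as ``routine.''
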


\subsection{The results of the paper}   \label{ss:res}

The goal of the paper is to investigate statements \eqref{eq:main} and \eqref{eq:main2} for arbitrary random variables,  martingale difference sequences, and centered sequences with a certain level of negative dependence. We will deduce \eqref{eq:main} by assuming only a uniform moment condition $\sup_{n\geq 1} \E(|X_n|^q f(|X_n|))<\infty$, so in contrast to Theorems~\ref{t:K},~\ref{t:W}, and \ref{t:KMYS} properties \eqref{WD} and \eqref{WMD} will not be assumed. We will find the smallest possible suitable constant $q=q(p,r)$ which we call the \emph{critical exponent}. In particular, we generalize Theorems~\ref{t:MYS} and \ref{t:Stout}. We will be also able to determine the precise smaller order term $f$. Similarly to Theorem~\ref{t:Stout} the function $f\colon [0,\infty)\to \R^+$ might be any non-decreasing function satisfying $\sum_{n=1}^{\infty} 1/f(2^n)<\infty$, and the finiteness of the sum is really necessary even for \eqref{eq:main2}. By Corollary~\ref{c:f} we may assume that $f(n)=n^{o(1)}$ as $n\to \infty$, see also Remark~\ref{r:f} for the least possible order of magnitude of $f$. Stoica claimed similar theorems for martingale difference sequences in \cite{St1} and \cite{St2}, but those results are incorrect. He stated in \cite{St1} that if $0<r<2<p$ and $\{X_n\}_{n\geq 1}$ is a MDS with $\sup_{n\geq 1} \E(|X_n|^p)<\infty$ then \eqref{eq:main2} holds. This was disproved in \cite{S}, see Theorem~\ref{t:MYS}. Theorems~1 and~2 in \cite{St2} state that if $1\leq r\leq p<2$ and $\{X_n\}_{n\geq 1}$ is a MDS with $\sup_{n\geq 1} \E(|X_n|^p \log^+ |X_n|)<\infty$ then \eqref{eq:main2} holds. Theorem~\ref{t:counter} below witnesses that this is not true even for independent, centered sequences of random variables. 

\bigskip

The following theorem is one of the most important results in the paper. 

 \begin{theorem} \label{t:main} Let $f\colon [0,\infty)\to \R^+$ be a non-decreasing function such that \[\sum_{n=1}^{\infty} \frac{1}{f(2^n)}<\infty.\]
 Let $0<r<2$ and let $p\geq r$. Let $\{X_n\}_{n\geq 1}$ be a  
 \begin{enumerate}[(i)]
\item \label{b1} sequence of arbitrary random variables if $0<r\leq p\leq 1$ and $r<1$,
\item \label{b2} martingale difference sequence if $1<p\leq 2$ or $r=p=1$, 
\item \label{b3} centered, negatively associated sequence of random variables if $p\geq 2$. 
\end{enumerate} 	
If $\sup_{n\geq 1} \E(|X_n|^p f(|X_n|))<\infty$ then for all $\eps>0$ we have \[\sum_{n=1}^{\infty} n^{p/r-2} \P(M_n>\eps n^{1/r})<\infty.\]
 \end{theorem}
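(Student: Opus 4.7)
The plan is to combine a dyadic reduction with truncation and to apply a moment/maximal inequality tailored to each of the three cases.

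First, using $M_n \le M_{2^{k+1}}$ for $n \in [2^k, 2^{k+1})$ together with $\sum_{n \in [2^k,2^{k+1})} n^{p/r-2} \asymp 2^{k(p/r-1)}$, it suffices to prove that
\[\sum_{k=0}^{\infty} 2^{k(p/r-1)} \P(M_{2^k} > \eps \cdot 2^{k/r}) < \infty\]
for every $\eps > 0$. At each dyadic level $k$ I truncate by setting $Y_i = X_i \mathbf{1}(|X_i| \le 2^{k/r})$ and $Z_i = X_i - Y_i$, so that
\[\P(M_{2^k} > \eps \cdot 2^{k/r}) \le \P\!\left(M_{2^k}^Y > \tfrac{\eps}{2}\cdot 2^{k/r}\right) + \sum_{i \le 2^k} \P(|X_i| > 2^{k/r}).\]
Markov's inequality applied to the $f$-moment yields $\P(|X_i| > 2^{k/r}) \le C\cdot 2^{-kp/r}/f(2^{k/r})$, and after summation the contribution of the tail term is dominated by $C\sum_k 1/f(2^{k/r}) < \infty$, a routine monotonicity/change-of-variable argument reducing this to $\sum 1/f(2^k)$.

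The core of the argument is the estimation of $\P(M_{2^k}^Y > (\eps/2)\cdot 2^{k/r})$, where the three cases diverge. In case (ii) I first pass to the conditionally centered increments $\widetilde Y_i = Y_i - \E(Y_i\,|\,\iF_{i-1})$, which remain a martingale difference sequence; the centering correction $\sum_i |\E(Z_i|\iF_{i-1})|$ is controlled by Markov's inequality and the $f$-moment bound, while the main term is handled by Doob's maximal inequality together with the Burkholder--Davis--Gundy (or von Bahr--Esseen) inequality at exponent $p\in(1,2]$, giving $\E M_n^{\widetilde Y, p} \le C\sum_i \E|Y_i|^p$. In case (iii) the analogous step uses the Rosenthal-type maximal inequality for NA sequences, which additionally produces a term $\bigl(\sum_i \E Y_i^2\bigr)^{p/2}$. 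In case (i) no maximal inequality is needed: the pathwise subadditivity $(M_n^Y)^p \le \sum_i |Y_i|^p$, valid from $x\mapsto x^p$ being subadditive for $p\le 1$, combined with Markov's inequality at exponent $p$, already suffices. In every case the moments $\E|Y_i|^p$ and $\E Y_i^2$ are then estimated by decomposing $\{|X_i|\le 2^{k/r}\}$ dyadically into $\{2^{j-1} < |X_i| \le 2^j\}$ and using $\P(|X_i|>2^{j-1}) \le C/(2^{jp}f(2^j))$; a Fubini-type interchange of the summations over $k$ and $j$ then causes the polynomial weights to telescope, and the final series reduces to a constant multiple of $\sum_j 1/f(2^j) < \infty$.

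I expect the main obstacle to be the bookkeeping in case (iii), where the second-moment contribution $(\sum_i \E Y_i^2)^{p/2}$ from the NA Rosenthal inequality for $p\ge 2$ carries a factor whose interaction with the polynomial weight $2^{k(p/r-1)}$ has to be tracked precisely, so that the extra cost is ultimately paid for by the $1/f(2^{k/r})$ factor rather than absorbing it. A secondary subtlety is the centering step in case (ii), which must produce an error of order $o(2^{k/r})$ uniformly and relies on the MDS structure at the conditional level rather than on independence, and a related borderline matching of exponents in case (i) at $p=1$ where the Fubini rearrangement is critical.
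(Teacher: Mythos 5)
Your plan has a recurring exponent problem that breaks the truncated-part estimate in every case: you always apply the moment/maximal inequality at exponent $p$, but the truncated $p$th moment carries no $1/f$ factor. Concretely, $\E\bigl(|X_i|^p\mathbf{1}(|X_i|\le T)\bigr)\le\E|X_i|^p$, which is bounded uniformly in the truncation level $T$ (use $f\ge f(0)>0$ and Fact~\ref{f:2}); so in case~(i) Markov at exponent $p$ gives $\P(M_{2^k}^Y>(\eps/2)2^{k/r})\lesssim 2^{k(1-p/r)}$ with \emph{no} decaying factor $1/f(2^{k/r})$, and after weighting by $2^{k(p/r-1)}$ the series over $k$ is $\sum_k O(1)=\infty$. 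The Fubini you describe cannot save this, because already the constant-order contribution from $\{|X_i|\le1\}$ produces a divergent $\sum_k 1$. The same happens in case~(ii) with von Bahr--Esseen at exponent $p$: $\sum_i\E|Y_i|^p\lesssim n$ with no $1/f$, and the weighted sum diverges. In case~(iii) the offending object is the second-moment Rosenthal term $\bigl(\sum_i\E Y_i^2\bigr)^{p/2}\lesssim n^{p/2}$, which itself has no $1/f$; here $n^{p/r-2}\cdot n^{-p/r}\cdot n^{p/2}=n^{p/2-2}$ is not summable for $p\ge2$, and the $1/f$ factor you hoped would absorb it lives in the \emph{other} Rosenthal term, not this one. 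What \emph{does} produce a $1/f$ factor from the truncated part is a moment strictly greater than $p$: for $m>p$, $\E\bigl(|X_i|^m\mathbf{1}(|X_i|\le T)\bigr)=O(T^{m-p}/f(T))$ — of exactly the same character as your (correct) tail estimate $\P(|X_i|>T)\lesssim T^{-p}/f(T)$.

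This exponent mismatch is precisely what the paper's three proofs are designed to avoid, in different ways. For case~(i) with $p<1$ (Theorem~\ref{t:arb1}) there is no truncation at all: after regularising $f$ (Corollary~\ref{c:1}) the map $g_p(x)=x^pf(x)$ is concave, hence subadditive, and Markov applied directly to $g_p(|S_n|)$ yields the denominator $g_p(\eps n^{1/r})=\eps^pn^{p/r}f(\eps n^{1/r})$, with the $1/f$ coming from the \emph{argument} of Markov's inequality rather than from truncated moments; for $p=1$ the convex $g_1(x)=xf(x)$ with Jensen on $|S_n|/n$ does the analogous job. For case~(ii) with $1<p<2$ (Theorem~\ref{t:MDS}) the paper truncates but then applies Doob at the \emph{second} moment ($2>p$) and uses a Fubini/monotonicity argument with $x\mapsto x^{p-2}f(x)$ non-increasing; the borderline $p=2$ cannot be reached this way and is handled separately in Theorem~\ref{t:MDS2} via Burkholder--Davis--Gundy with the convex $g_q(x)=x^qf(x)$, $q=2(p-r)/(2-r)$, and Jensen for $f_{q/2}(x)=x^{q/2}f(\sqrt{x})$. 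For case~(iii) (Theorem~\ref{t:indep}) the paper avoids Rosenthal entirely and uses Shao's exponential inequality, so no $\bigl(\sum\E Y_i^2\bigr)^{p/2}$ term ever appears; a Rosenthal route could work, but only at an exponent $m>2(p-r)/(2-r)>p$, which becomes legitimate after truncation since $|Y_i|\le 2^{k/r}$ gives all moments. In short, the skeleton you propose (dyadic block, truncation, maximal inequality, Fubini) is the right shape, but the exponent choices you specified — $p$ throughout — defeat the argument in each case.
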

 
We will prove the above theorem in several steps. Theorem~\ref{t:arb1} implies \eqref{b1}, Theorems~\ref{t:MDS} and~\ref{t:MDS2} yield \eqref{b2}, and \eqref{b3} is stated as Theorem~\ref{t:indep}. 

The next theorem is analogous to Theorem~\ref{t:W} with a similar proof. Thus, instead of proving it, we suggest the reader to follow \cite[Theorems~1.1,~1.2]{TWZ}. 

\begin{theorem}\label{t:pind}
	Let $f\colon [0,\infty)\to \R^+$ be a non-decreasing function such that \[\sum_{n=1}^{\infty} \frac{1}{f(2^n)}<\infty.\]
	Let $0<r\leq p$ and $1\leq p<2$. If $\{X_n\}_{n\geq 1}$ is a centered, pairwise NQD sequence with $\sup_{n\geq 1} \E(|X_n|^pf(|X_n|))<\infty$ then \eqref{eq:main2} holds. 
	If $r\neq p$ then \eqref{eq:main} holds, too. 
\end{theorem}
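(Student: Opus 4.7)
My plan is to follow the strategy of Tan--Wang--Zhang \cite[Theorems~1.1,~1.2]{TWZ} verbatim, replacing every appeal to the weak-domination assumption \eqref{WD} by a direct application of Markov's inequality with the $f$-factor, using $K:=\sup_n \E(|X_n|^p f(|X_n|))<\infty$. For a pairwise NQD sequence, monotone (in particular, clipping) functions preserve the NQD property, so all truncation manipulations from \cite{TWZ} go through unchanged.

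Concretely, I would clip $X_i$ at level $n^{1/r}$ via $X_i^{(n)} := (X_i \wedge n^{1/r}) \vee (-n^{1/r})$ and decompose
\[\P(|S_n|>\eps n^{1/r}) \leq \sum_{i=1}^n \P(|X_i|>n^{1/r}) + \P\!\left(\Bigl|\sum_{i=1}^n X_i^{(n)}\Bigr|>\eps n^{1/r}\right).\]
Monotonicity of $f$ gives $\P(|X_i|>n^{1/r}) \leq K/(n^{p/r} f(n^{1/r}))$, so after multiplying by $n^{p/r-2}$ and performing a dyadic regrouping the tail contribution is bounded by $C\sum_k 1/f(2^{k/r})$, which is finite because $\sum_k 1/f(2^k)<\infty$ and $f$ is non-decreasing. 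For the truncated sum, since $\E X_i = 0$ and $p\geq 1$, a routine tail estimate yields $|\sum_{i=1}^n \E X_i^{(n)}| \leq C n^{1+(1-p)/r}/f(n^{1/r}) = o(n^{1/r})$, using either $p>r$ or, when $p=r$, the fact that $f\to\infty$. The remaining centered piece is handled by Chebyshev together with the pairwise-NQD variance bound $\Var(\sum X_i^{(n)}) \leq \sum \E (X_i^{(n)})^2$: one expands each $\E (X_i^{(n)})^2$ dyadically on the shells $\{2^{k-1}<|X_i|\leq 2^k\}$, uses the lower bound $|X_i|^p f(|X_i|) \geq 2^{(k-1)p} f(2^{k-1})$ there, and interchanges the order of summation; the resulting double sum collapses to a constant multiple of $\sum_k 1/f(2^{k-1})$, hence converges. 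This establishes \eqref{eq:main2}.

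For the upgrade to \eqref{eq:main} under the strict inequality $p>r$, I would use dyadic blocking: for $2^k\leq n<2^{k+1}$, $\{M_n>\eps n^{1/r}\}\subseteq\{M_{2^{k+1}}>\eps 2^{k/r}\}$, so it suffices to show $\sum_k 2^{k(p/r-1)}\P(M_{2^{k+1}}>\eps 2^{k/r})<\infty$. This is obtained by combining a M\'oricz-type second-moment maximal inequality valid for pairwise NQD partial sums with the truncation and centering estimates above; the strict inequality $p>r$ makes the extra factor $2^{k(p/r-1)}$ affordable. The main obstacle, in my view, is the second-moment bookkeeping in the $|S_n|$ step: without the $f$-factor the Fubini interchange produces the harmonic divergence $\sum 1/n$, and it is precisely the $f$-weighted gain that lets the argument close at the borderline exponent $p$.
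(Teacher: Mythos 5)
Your proposal is, in essence, exactly what the paper prescribes: the paper declines to prove Theorem~\ref{t:pind} and simply directs the reader to reproduce the argument of Tan--Wang--Zhang \cite{TWZ}, substituting the weak-domination hypothesis by the uniform bound $\sup_n\E(|X_n|^p f(|X_n|))<\infty$. Your sketch of the $|S_n|$ step is correct: clipping at level $n^{1/r}$, Markov with the $f$-weight on the tail ($\P(|X_i|>n^{1/r})\leq K/(n^{p/r}f(n^{1/r}))$), the $o(n^{1/r})$ centering estimate (using $p>r$, or $f\to\infty$ when $p=r$), and Chebyshev on the truncated centered sum via the pairwise-NQD covariance bound $\Cov(X_i^{(n)},X_j^{(n)})\leq 0$. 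The dyadic-shell/Fubini computation collapsing the double sum to a multiple of $\sum_k 1/f(2^{k-1})$ is also right, although you should note that dominating the shell sum by its top term tacitly requires the normalization $f(2^{k+1})/f(2^k)\to 1$, which is available after replacing $f$ via Corollary~\ref{c:f}.

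The one place I would press you is the $M_n$ upgrade. You invoke ``a M\'oricz-type second-moment maximal inequality valid for pairwise NQD partial sums,'' but the classical Rademacher--Menshov/M\'oricz inequality, applicable to sequences that are merely pairwise negatively correlated, carries a $(\log n)^2$ factor. Tracking this factor through your blocking computation yields a bound of the form $\sum_k k^2/f(2^{k/r})$, and this can diverge even when $\sum_k 1/f(2^k)<\infty$; for instance $f(x)=\log x\cdot(\log\log x)^2$ gives $\sum_k 1/f(2^k)\asymp\sum_k 1/(k(\log k)^2)<\infty$ while $\sum_k k^2/f(2^{k/r})\asymp\sum_k k/(\log k)^2=\infty$. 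The condition $p>r$ alone does not absorb this loss, because the truncation level is pinned to $n^{1/r}$ by the tail estimate and cannot be shifted without destroying either the tail term or the variance term. The resolution in \cite{TWZ} and in Wu's original work is to reach $M_n$ through a device with no logarithmic cost (an Ottaviani/L\'evy-type inequality for pairwise NQD sums, or an argument along geometric subsequences relating $M_n$ to $|S_n|$ directly), not through a M\'oricz-type bound. So the overall plan is sound, but you should replace the appeal to a ``M\'oricz-type'' inequality by the specific log-free maximal tool actually used in \cite{TWZ}; otherwise the convergence fails exactly at the borderline $f$'s the theorem is designed to capture.
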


The following theorem shows that the moment conditions above are sharp even for independent, centered random variables, even if $r=p$ and we want to obtain only $\lim_{n\to \infty} n^{-1/p} S_n= 0$ almost surely, recall Statement~\ref{st:1}~\eqref{eq:1st}.
 
 \begin{ind} Let $f\colon [0,\infty)\to \R^+$ be a non-decreasing function such that \[\sum_{n=1}^{\infty} \frac{1}{f(2^n)}=\infty.\]
 Let $0<r<2$ and let $p\geq r$. Then there exists a sequence of independent, centered random variables $\{X_n\}_{n\geq 1}$
 such that $\sup_{n\geq 1} \E(|X_n|^p f(|X_n|))<\infty$ and
 \[\sum_{n=1}^{\infty} n^{p/r-2} \P(|S_n|> n^{1/r})=\infty.\]
 Moreover, if $r=p$ then $\limsup_{n\to \infty} n^{-1/p}S_n\geq 1$ almost surely. 
 \end{ind}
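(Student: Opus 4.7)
The plan is to construct an explicit sequence $\{X_n\}_{n \geq 1}$ of independent symmetric random variables. I would first partition the positive integers into the dyadic blocks $B_k = \{n \in \N^+ : 2^k < n^{1/r} \leq 2^{k+1}\}$ (so $|B_k| \asymp 2^{rk}$). For each $n \in B_k$ I would let $X_n$ take the values $\pm a_k$ each with probability $q_k/2$ and the value $0$ with probability $1 - q_k$, where $a_k = 2^{k+3}$ and $q_k = 1/(a_k^p f(a_k))$. With this choice $\E|X_n|^p f(|X_n|) = 1$ for every $n$, so the moment hypothesis is satisfied uniformly.

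To get the divergence $\sum_n n^{p/r-2} \P(|S_n| > n^{1/r}) = \infty$ I would use the L\'evy-type maximal inequality $\P(|S_n| > n^{1/r}) \geq \tfrac12 \P(\max_{j \leq n}|X_j| > 2n^{1/r})$, valid for independent symmetric variables, combined with $\P(\max_{j \leq n} |X_j| > 2n^{1/r}) \gtrsim \min(1, \sum_{j \leq n}\P(|X_j| > 2n^{1/r}))$. For $n \in B_k$ the condition $|X_j| > 2n^{1/r}$ can be satisfied only when $j$ lies in one of the two blocks $B_{k-1}, B_k$, and a direct computation then yields $\sum_{j \leq n} \P(|X_j| > 2n^{1/r}) \asymp 2^{k(r-p)}/f(2^k)$. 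Combining with $\sum_{n \in B_k} n^{p/r - 2} \asymp 2^{k(p - r)}$, the contribution of the block $B_k$ to the series is $\asymp 1/f(2^k)$, and summing over $k$ gives $\sum_k 1/f(2^k) = \infty$ by hypothesis.

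For the moreover statement ($r = p$) I would argue via the second Borel--Cantelli lemma. Let $T_k = \sum_{j \in B_k} X_j$; these are independent across $k$. Setting $E_k = \{T_k > 2^{k+2}\}$, a single-spike estimate gives $\P(E_k) \gtrsim |B_k| q_k \asymp 1/f(2^k)$, so $\sum_k \P(E_k) = \infty$ and hence $E_k$ holds for infinitely many $k$ almost surely. Writing $n_k = \max B_k$ (so $n_k^{1/p} \leq 2^{k+1}$) and $S_{n_k} = S_{\min B_k - 1} + T_k$, I would split into cases: on $E_k \cap \{|S_{\min B_k - 1}| < 2^{k+1}\}$ one has $|S_{n_k}| \geq T_k - |S_{\min B_k - 1}| > 2^{k+1} \geq n_k^{1/p}$; on the complement $\{|S_{\min B_k - 1}| \geq 2^{k+1}\}$ one already has $|S_{\min B_k - 1}|/(\min B_k - 1)^{1/p} \geq 2$. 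Either way, for each $k$ on which $E_k$ holds there is some $n \in [\min B_k - 1, \max B_k]$ with $|S_n|/n^{1/p} > 1$, and since there are almost surely infinitely many such $k$ one obtains $\limsup_n n^{-1/p} S_n \geq 1$ almost surely.

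The hard part will be obtaining a sharp enough lower bound for $\P(|S_n| > n^{1/r})$. The naive estimate $\P(|S_n| > n^{1/r}) \geq q_k/2$, coming from conditioning on $X_n = \pm a_k$, only yields the convergent series $\sum_n n^{p/r-2} q_n \asymp \sum_k 2^{-rk}/f(2^k) < \infty$. It is the L\'evy maximal inequality that supplies the extra multiplicative factor $|B_k| \asymp 2^{rk}$ in the probability estimate, which is exactly what converts the convergent $\sum_k q_k \cdot 2^{k(p-r)}$ into the divergent $\sum_k |B_k| q_k \cdot 2^{k(p-r)} \asymp \sum_k 1/f(2^k)$ provided by the hypothesis.
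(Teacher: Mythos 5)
Your construction is essentially the paper's (symmetric two-point spikes on blocks that are dyadic in $n^{1/r}$, with the success probability tuned so that $\E(|X_n|^p f(|X_n|))$ is constant), but your analysis takes a genuinely different route in both halves. For the divergence, the paper conditions on the partial sum just before the current block and counts single-spike configurations directly, with the auxiliary bound $(1-2p_k)^{4^k}\geq c$; you instead invoke the L\'evy-type maximal inequality $\P(|S_n|>t)\geq\tfrac12\P(\max_{j\leq n}|X_j|>2t)$ for independent symmetric variables together with $\P\bigl(\bigcup_j A_j\bigr)\gtrsim\min\bigl(1,\sum_j\P(A_j)\bigr)$ for independent events. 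That is a correct and slightly slicker route. One imprecision to fix: $\sum_{j\leq n}\P(|X_j|>2n^{1/r})\asymp 2^{k(r-p)}/f(2^k)$ holds only for $n$ near the top of $B_k$ (when $n$ is near $\min B_k$ there are too few $j\in B_k$ with $j\leq n$), so you should restrict the outer sum to, say, the top half of each block --- exactly as the paper restricts to $2\cdot4^{k-1}\leq n<4^k$; the order of magnitude of $\sum_n n^{p/r-2}$ over that half is unchanged. For the ``moreover'' part your argument via the block sums $T_k$ is a genuine improvement in cleanliness: because the $T_k$ involve disjoint index sets, the events $E_k$ are independent and the ordinary second Borel--Cantelli lemma applies, whereas the paper's events $A_k=\{S_{4^k-1}\geq 4^{k/p}\}$ are dependent and force the use of Nash's conditional Borel--Cantelli lemma.

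There is, however, a genuine gap at the end of your ``moreover'' argument. In the case $\{|S_{\min B_k-1}|\geq 2^{k+1}\}$ you only control $|S_{\min B_k-1}|$, not its sign, so your case split actually establishes $\limsup_n n^{-1/p}|S_n|\geq 1$ almost surely, not the one-sided $\limsup_n n^{-1/p}S_n\geq 1$ the theorem asserts; your concluding sentence silently replaces $|S_n|$ by $S_n$. This is repairable precisely because your $X_n$ are symmetric: the event $\{\limsup_n n^{-1/p}S_n\geq 1\}$ is a tail event, so by the Kolmogorov zero--one law its probability is $0$ or $1$; by symmetry it equals $\P(\limsup_n n^{-1/p}(-S_n)\geq 1)$; and since $\limsup_n n^{-1/p}|S_n|=\max\bigl(\limsup_n n^{-1/p}S_n,\,\limsup_n n^{-1/p}(-S_n)\bigr)\geq 1$ a.s., the common probability cannot be $0$. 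You need to spell this out --- without it the proposal does not establish the stated conclusion.
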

 
First consider Theorem~\ref{t:main}~\eqref{b1}. 
In the case of arbitrary random variables 
we need to suppose that $r<1$. Indeed, for $1\leq r\leq p$ let $X_n\equiv 1$ for all $n$. 
Then $\sup_{n\geq 1} \E(|X_n|^q)=1$ for all $q>0$ but
\[\sum_{n=1}^{\infty} n^{p/r-2} \P(|S_n|>(1/2)n^{1/r})=\sum_{n=1}^{\infty} n^{p/r-2}\geq \sum_{n=1}^{\infty} n^{-1}=\infty.\] 
Theorem~\ref{t:main}~\eqref{b1} easily follows from the following, more general theorem. 

\begin{arb1}  Let $f\colon [0,\infty)\to \R^+$ be a non-decreasing function such that
	\[\sum_{n=1}^{\infty} \frac{1}{f(2^n)}<\infty.\]
	Let $0<r<1$ and let $p\geq r$, and define $q=q(r,p)=\max\{p, (p-r)/(1-r)\}$. 
	Assume that $\{X_n\}_{n\geq 1}$ is a sequence of random variables with $\sup_{n\geq 1} \E(|X_n|^q f(|X_n|))<\infty$. Then for all $\eps>0$ we have
	\[\sum_{n=1}^{\infty} n^{p/r-2} \P(M_n>\eps n^{1/r})<\infty.\]
\end{arb1}

We prove that the above theorem is sharp. By Theorem~\ref{t:counter} it is enough to consider the case $p\geq 1$. 

\begin{arb2} Let $f\colon [0,\infty)\to \R^+$ be a non-decreasing function such that
	\[\sum_{n=1}^{\infty} \frac{1}{f(2^n)}=\infty.\]
	Let $0<r<1\leq p$ and let $q=q(r,p)=(p-r)/(1-r)$. Then there is a sequence of random variables $\{X_n\}_{n\geq 1}$ such that $\sup_{n\geq 1} \E(|X_n|^q f(|X_n|))<\infty$ and
	\[\sum_{n=1}^{\infty} n^{p/r-2} \P(|S_n|>n^{1/r})=\infty.\]
\end{arb2}

We prove Theorem~\ref{t:main}~\eqref{b2} for $p<2$ as Theorem~\ref{t:MDS}. We follow the strategy of the proof of \cite[Appendix~A.1]{DM}. Theorem~\ref{t:arb2} yields that Theorem~\ref{t:main}~\eqref{b2} does not remain true for arbitrary random variables. 
 
The following theorems handle martingale difference sequences in the case $p\geq 2$. In particular, the next theorem proves Theorem~\ref{t:main}~\eqref{b2} for $p=2$. 

\begin{MDS2} Let $f\colon [0,\infty)\to \R^+$ be a non-decreasing function such that
	\[\sum_{n=1}^{\infty} \frac{1}{f(2^n)}<\infty.\]
	Let $0<r<2\leq p$ and let $q=q(r,p)=2(p-r)/(2-r)$. Let $\{X_n\}_{n\geq 1}$ be a martingale difference sequence such that $\sup_{n\geq 1} \E(|X_n|^q f(|X_n|))<\infty$. Then for all $\eps>0$ we have \[\sum_{n=1}^{\infty} n^{p/r-2} \P(M_n>\eps n^{1/r})<\infty.\]
\end{MDS2}

The following theorem witnesses that the above result is best possible.

\begin{cMDS} Let $f\colon [0,\infty)\to \R^+$ be a non-decreasing function such that
	\[\sum_{n=1}^{\infty} \frac{1}{f(2^n)}=\infty.\]
	Let $0<r<2\leq p$ and let $q=q(r,p)=2(p-r)/(2-r)$. Then there is a martingale difference sequence $\{X_n\}_{n\geq 1}$ such that 
	$\sup_{n\geq 1} \E(|X_n|^q f(|X_n|))<\infty$ and 
	\[\sum_{n=1}^{\infty} n^{p/r-2} \P(|S_n|>n^{1/r})=\infty.\]
\end{cMDS}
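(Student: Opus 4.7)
The plan is to construct the required martingale difference sequence from blocks of independent Rademacher signs activated by a rare, block-wise shared Bernoulli mask, with parameters tuned exactly to the critical exponent $q=2(p-r)/(2-r)$. Let $N_k=2^k$ and $B_k=\{2^k,\dots,2^{k+1}-1\}$, and let $\{\xi_k\}_{k\geq 1}$ and $\{\sigma_n\}_{n\geq 1}$ be mutually independent collections of independent Bernoulli$(p_k)$ and Rademacher random variables, respectively; set
\[
b_k=K\cdot 2^{(k+1)/r-k/2}, \qquad p_k=\frac{C_0}{b_k^q\,f(b_k)},
\]
for a large constant $K$ and a small constant $C_0$, and for $n\in B_k$ define $X_n=b_k\,\xi_k\,\sigma_n$. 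Independence of $\sigma_n$ from $\mathcal{F}_{n-1}$ makes the MDS property immediate, and the choice of $p_k$ gives $\E(|X_n|^q f(|X_n|))=p_k\,b_k^q\,f(b_k)=C_0$ for every $n$. A direct computation using $q=2(p-r)/(2-r)$ yields $b_k^q\asymp 2^{k(p/r-1)}$, and since $p\geq 2>r$ we have $p/r-1>0$; this makes $\sum_k p_k<\infty$ and so $c_1:=\inf_k\prod_{j<k}(1-p_j)>0$.

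Next I would lower bound $\P(|S_m|>m^{1/r})$ for $m$ in the second half of $B_k$, that is, for $m\in[2^k+N_k/2,\,2^{k+1})$. Introduce the three mutually independent events $E_1=\{\xi_j=0\ \text{for all}\ j<k\}$, $E_2=\{\xi_k=1\}$, and $E_3(m)=\{|T_m|\geq c_2\sqrt{m-2^k+1}\}$, where $T_m=\sum_{n=2^k}^{m}\sigma_n$ and $c_2,c_3>0$ are the Paley--Zygmund constants giving $\P(E_3(m))\geq c_3$. By independence, $\P(E_1\cap E_2\cap E_3(m))\geq c_1 c_3\,p_k$, and on this intersection $S_m$ reduces to $b_k T_m$, so
\[
|S_m|\geq c_2\,b_k\sqrt{N_k/2}=\tfrac{c_2}{\sqrt{2}}\,K\cdot 2^{(k+1)/r}\geq m^{1/r}
\]
provided $K$ is taken large enough. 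Hence $\P(|S_m|>m^{1/r})\geq c_4\,p_k$ for every such $m$.

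Finally, using $m^{p/r-2}\asymp 2^{k(p/r-2)}$ throughout $B_k$ we obtain
\[
\sum_{m=1}^{\infty}m^{p/r-2}\,\P(|S_m|>m^{1/r})\geq c_5\sum_k p_k\cdot |B_k|\cdot 2^{k(p/r-2)}\asymp\sum_k p_k\cdot 2^{k(p/r-1)}\asymp\sum_k\frac{1}{f(b_k)}.
\]
Since $b_k=K\cdot 2^{\alpha k+1/r}$ with $\alpha=(2-r)/(2r)>0$ and $f$ is non-decreasing, a standard integral comparison shows that $\sum_k 1/f(b_k)$ and $\sum_n 1/f(2^n)$ are finite or infinite together (both are comparable to $\int_1^\infty dx/f(2^x)$), so the hypothesis forces divergence.

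The main obstacle is isolating the correct critical scaling $b_k\sqrt{N_k}\asymp n^{1/r}$ with block-shared activation: it is precisely this $\sqrt{N_k}$-gain of the activated Rademacher walk, balanced against the moment constraint $p_k b_k^q f(b_k)\leq C_0$, that produces the exponent $q=2(p-r)/(2-r)$. Once this calibration is in place, transferring the divergence hypothesis $\sum_n 1/f(2^n)=\infty$ through the affine reparametrisation $n\mapsto \alpha k+1/r$ is routine.
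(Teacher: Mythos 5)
Your construction is essentially the same as the paper's: block-shared Bernoulli activation of a Rademacher walk, with magnitude $\asymp 2^{k(1/r-1/2)}$ and activation probability $p_k$ calibrated so $\E(|X_n|^qf(|X_n|))$ is constant, then a CLT/Paley--Zygmund anticoncentration bound on the walk over the second half of each block, and finally Fact~\ref{f:1} to transfer divergence of $\sum 1/f(2^n)$ to the reparametrised blocks. The only minor implementation difference is that you condition on no earlier activation (the event $E_1$, valid since $\sum_k p_k<\infty$) to reduce $S_m$ to the current block, whereas the paper conditions on the value of the pre-block partial sum $S_{4^{k-1}-1}=x$ and uses the $\pm$-symmetry of the signs to conclude that one of the two tails must overshoot $n^{1/r}$; both work, and the paper's variant does not need summability of $p_k$.
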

 
Miao, Yang, and Stoica proved that the threshold in Theorems~\ref{t:MDS2} and~\ref{t:cMDS} is at $q(r,p)=2(p-r)/(2-r)$, recall Theorem~\ref{t:MYS}. We will improve their methods in order to find the precise smaller order term.
 
Theorem~\ref{t:main}~\eqref{b3} is stated as Theorem~\ref{t:indep}, which will simply follow from an inequality of Shao~\cite{Sh}. If $0<r<1$ then we can remove the assumption that $\{X_n\}_{n\geq 1}$ is centered from Theorems~\ref{t:pind} and~\ref{t:indep}.
 
 \begin{c:c} 
 Let $f\colon [0,\infty)\to \R^+$ be a non-decreasing function such that 
 \[\sum_{n=1}^{\infty} \frac{1}{f(2^n)}<\infty.\]
 Let $0<r<1\leq p$, and let $\{X_n\}_{n\geq 1}$ be a sequence of 
 \begin{enumerate}
 \item pairwise NQD random variables if $1\leq p<2$,
 \item negatively associated random variables if $p\geq 2$.
 \end{enumerate} 
 Assume that $\sup_{n\geq 1} \E(|X_n|^pf(|X_n|))<\infty$. Then for all $\eps>0$ we have \[\sum_{n=1}^{\infty} n^{p/r-2} \P(M_n>\eps n^{1/r})<\infty.\]
 \end{c:c}
 
The above theorems witness that 
if $0<r<2<p$ then the critical exponents for independent centered sequences and martingale difference sequences are different, since $p<2(p-r)/(2-r)$. See Table~\ref{table} for the values of the critical exponents.

\begin{table}[!h]
\caption{The critical exponents for different intervals of $p$ and types of sequences. ICS and MDS denote independent, centered sequences, and martingale difference sequences, respectively.}
\label{table} 
\setlength{\extrarowheight}{2pt}
\centering
\begin{tabular}{|c|c|c|c|} 
\hline
\mbox{} & ICS & MDS & Arbitrary Sequences \\
\cline{1-4} 
$p\leq 1$ & $p$ & $p$ & $p$ if $r<1$\\
\cline{1-4}
$1<p\leq 2$ & $p$ & $p$ & $(p-r)/(1-r)$ if $r<1$ \\
\cline{1-4}
$p>2$ & $p$ & $2(p-r)/(2-r)$ &  $(p-r)/(1-r)$ if $r<1$  \\
\hline
\end{tabular}
\end{table}

We do not know much about pairwise independent random variables if $p\geq 2$.  

\begin{problem} 
Let $0<r<2\leq  p$ and let $k\geq 2$ be an integer. Let $\{X_n\}_{n\geq 1}$ be a sequence of $k$-wise independent, centered random variables. Do there exist results similar to Theorem~\ref{t:MDS2} (replace $M_n$ by $|S_n|$ if necessary) and Theorem~\ref{t:cMDS} with some $q=q(r,p,k)$? If yes, is it true that $q(r,p,k)=p$ for all $r,p,k$?   
\end{problem}

In fact, for some values of $r,p,k$ we can show that $q(r,p,k)=p$ is the critical exponent for \eqref{eq:main2}.  
The following remark is about the least possible order of magnitude of $f$ in the above theorems.

 \begin{remark} \label{r:f} Let $\log^+(x)=\max\{1,\log x\}$ for $x>0$ and $\log^+(0)=1$. For $k\in \N^+$ let $\log^+_{k}(x)$ denote the $k$th iteration of $\log^+ (x)$. For  $m\in \N^{+}$ and $\eps> 0$ define the functions $f_{m},f_{m,\eps}\colon [0,\infty) \to \R^+$ as
 	\begin{align*} 
 	f_{m}(x)&=\prod_{k=1}^{m} \log^+_k (x), \\
 	f_{m,\eps}(x)&=f_{m}(x) \left(\log_{m}^+(x)\right)^{\eps}.
 	\end{align*}
 	It is easy to see that for all $m\in \N^+$ and $\eps>0$ we have
 	\[\sum_{n=1}^{\infty}  \frac{1}{f_{m}(2^n)}=\infty \quad  \textrm{and} \quad \sum_{n=1}^{\infty} \frac{1}{f_{m,\eps}(2^n)}<\infty.\]
 \end{remark}
 
 In Section~\ref{s:pr} we recall some definitions and easy facts. In Section~\ref{s:tl} we prove a number of technical lemmas. Section~\ref{s:arb} is devoted to arbitrary random variables. In Section~\ref{s:MDS} we prove our theorems about martingale difference sequences. Finally, in Section~\ref{s:ind} we verify Theorems~\ref{t:indep} and~\ref{t:counter}, and Corollary~\ref{c:c}. Note that the proofs after Section~\ref{s:tl} can be read independently of each other.

 \section{Preliminaries} \label{s:pr} 
 
Let $\{X_n\}_{n\geq 1}$ be a sequence of random variables defined on the probability space $(\Omega,\iF, \P)$. It is a \emph{martingale difference sequence} if there is a filtration $\{\iF_n\}_{n\geq 0}$ such that $\iF_{0}=\{\emptyset, \Omega\}$, $X_n$ is measurable with respect to $\iF_n$, and $\E(X_n \, |\, \iF_{n-1})=0$ for all $n\in \N^+$. We may assume without loss of generality that $\iF_n=\sigma(X_1,\dots,X_n)$ is the $\sigma$-algebra generated by $X_1,\dots,X_n$ for all $n\in \N^+$. A random variable is called \emph{centered} if $\E(X)=0$. 

 Let $E\subset \R$ and let $f\colon E\to \R$. We say that $f$ is \emph{non-decreasing} (or \emph{increasing}) if for all $x,y\in E$, $x<y$ we have $f(x)\leq f(y)$ (or $f(x)<f(y)$). We can similarly define the notions \emph{non-increasing} and \emph{decreasing}, and if $E=\N^+$ then our definitions extend to sequences as well. 
Let $I(A)$ denote the indicator function of an event $A$. We use the notation $a\lesssim b$ if $a\leq cb$ with some $c\in \R^+$, where $c$ depends only on earlier fixed constants. The notation $a_n=o(b_n)$ as $n\to \infty$ means that $\lim_{n\to \infty} a_n/b_n=0$. We need the following facts.
   
\begin{fact} \label{f:1}
Let $f\colon [0,\infty)\to \R^+$ be a non-decreasing function. Then the following statements are equivalent:
\begin{enumerate} [(i)]
\item  \label{e:f1} $\sum_{n=1}^{\infty} 1/f(2^{cn})<\infty$ for some $c>0$,
\item \label{e:f2} $\sum_{n=1}^{\infty} 1/f(\eps 2^{cn})<\infty$ for all $\eps,c>0$,
\item \label{e:f3} $\sum_{n=1}^{\infty} 1/(nf(n^c))<\infty$ for some $c>0$,
\item \label{e:f4} $\sum_{n=1}^{\infty} 1/(nf(\eps n^c))<\infty$ for all $\eps, c>0$.
\end{enumerate}
\end{fact}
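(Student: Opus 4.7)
The plan is to establish the cycle $(\mathrm{ii})\Rightarrow(\mathrm{i})\Rightarrow(\mathrm{iii})\Rightarrow(\mathrm{iv})\Rightarrow(\mathrm{ii})$, reducing everything to two non-trivial steps: the Cauchy condensation test and a dyadic block comparison that absorbs the parameters $\eps,c$ via monotonicity of $f$. The implications $(\mathrm{ii})\Rightarrow(\mathrm{i})$ and $(\mathrm{iv})\Rightarrow(\mathrm{iii})$ are immediate by specializing $\eps=1$ and choosing $c$.

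For $(\mathrm{i})\Rightarrow(\mathrm{ii})$, suppose $\sum_{n\geq 1} 1/f(2^{c_0 n})<\infty$ for some $c_0>0$, and fix arbitrary $\eps,c>0$. I would partition the indices $n\in\N^+$ according to the unique $k=k(n)\in\N$ with
\[
2^{c_0(k-1)}\leq \eps 2^{cn}<2^{c_0 k}.
\]
Monotonicity of $f$ gives $1/f(\eps 2^{cn})\leq 1/f(2^{c_0(k-1)})$. On the other hand, the inequality $\eps 2^{cn}<2^{c_0k}$ forces $n>(c_0 k-\log_2\eps)/c-1$, and $\eps 2^{cn}\geq 2^{c_0(k-1)}$ forces $n\leq (c_0k-c_0-\log_2\eps)/c$, so the number of $n$ with $k(n)=k$ is bounded by a constant $N$ depending only on $c_0/c$. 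Hence
\[
\sum_{n=1}^{\infty}\frac{1}{f(\eps 2^{cn})}\leq N\sum_{k=k_0}^{\infty}\frac{1}{f(2^{c_0(k-1)})}<\infty,
\]
for a suitable starting index $k_0$. The same argument (with $n^c$ in place of $2^{cn}$ and the extra factor $1/n$ handled by a harmless comparison within each block) gives $(\mathrm{iii})\Rightarrow(\mathrm{iv})$.

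The heart of the argument is $(\mathrm{i})\Leftrightarrow(\mathrm{iii})$, which I would deduce from Cauchy condensation. Consider the sequence $a_n=1/(nf(n))$: since $n\mapsto n f(n)$ is non-decreasing, $\{a_n\}$ is non-increasing and positive, so Cauchy's condensation test applies and yields
\[
\sum_{n=1}^{\infty}\frac{1}{nf(n)}<\infty \quad\Longleftrightarrow\quad \sum_{n=1}^{\infty}\frac{2^n}{2^n f(2^n)}=\sum_{n=1}^{\infty}\frac{1}{f(2^n)}<\infty,
\]
i.e.\ $(\mathrm{iii})$ with $c=1$ is equivalent to $(\mathrm{i})$ with $c=1$. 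Combined with the block arguments above, which absorb arbitrary $c>0$ and $\eps>0$ in both the exponential and polynomial scales, all four statements become equivalent. The only mildly fiddly point is the bookkeeping in the polynomial-scale block argument for $(\mathrm{iii})\Leftrightarrow(\mathrm{iv})$, since one has to compare $\sum 1/(nf(\eps n^c))$ to $\sum 1/(nf(n^{c_0}))$ and absorb the factor $1/n$ along with the block weights; this just amounts to noting that within a single block the ratio of the largest to smallest $n$ is bounded, so $1/n$ changes by a bounded factor and can be pulled out.
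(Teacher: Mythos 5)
Your proof is correct and follows essentially the same route as the paper: Cauchy condensation (equiconvergence of $\sum a_n$ and $\sum 2^n a_{2^n}$ applied to $a_n = 1/(nf(n))$) for the core equivalence between the exponential and polynomial scales, and dyadic block comparison to absorb the parameters $\eps$ and $c$ — the paper dismisses the latter as ``easy comparison'' while you spell it out. One small slip: in your block argument for $(\mathrm{i})\Rightarrow(\mathrm{ii})$ the two inequalities on $n$ are swapped (the constraint $\eps 2^{cn} < 2^{c_0 k}$ gives an upper bound $n < (c_0 k - \log_2\eps)/c$, and $\eps 2^{cn} \geq 2^{c_0(k-1)}$ gives the lower bound), but the conclusion that the block has $O(c_0/c)$ elements is unaffected, so this is only a typo.
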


The equivalence $\eqref{e:f1} \Leftrightarrow \eqref{e:f3}$ above follows from the equiconvergence of the series $\sum_{n=1}^{\infty} a_n $ and  $\sum_{n=1}^{\infty} 2^n a_{2^n}$ for any non-increasing, positive  sequence $\{a_n\}_{n\geq 1}$. Easy comparison implies the equivalences $\eqref{e:f1} \Leftrightarrow \eqref{e:f2}$ and $\eqref{e:f3} \Leftrightarrow \eqref{e:f4}$.
 
\begin{fact} \label{f:2} 	Let $\{X_n\}_{n\geq 1}$ be a sequence of random variables and let $g,h\colon [0,\infty)\to \R^+$ be non-decreasing functions such that
 \[\limsup_{x\to \infty} \frac{h(x)}{g(x)}<\infty.\]
Then $\sup_{n\geq 1}\E(g(|X_n|))<\infty$ implies that $\sup_{n\geq 1}\E(h(|X_n|))<\infty$.
\end{fact}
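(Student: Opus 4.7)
The plan is to promote the asymptotic ratio bound $\limsup_{x\to\infty} h(x)/g(x) < \infty$ into a genuine pointwise inequality of the form $h(x) \leq C g(x) + D$ valid for every $x \in [0,\infty)$, and then simply integrate against the distribution of $|X_n|$.

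First I would fix any constant $C$ strictly greater than $\limsup_{x\to\infty} h(x)/g(x)$. By definition of limit superior, there is some threshold $x_0 > 0$ such that $h(x) \leq C g(x)$ for every $x \geq x_0$. To deal with the remaining range $0 \leq x < x_0$, I would invoke the monotonicity of $h$: on this range $h(x) \leq h(x_0)$, which is a fixed finite constant $D := h(x_0)$. Since $g$ is positive-valued, one obtains the global inequality
\[
h(x) \leq C g(x) + D \qquad \text{for all } x \in [0,\infty).
\]

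Substituting $x = |X_n|$ and taking expectations yields $\E(h(|X_n|)) \leq C\,\E(g(|X_n|)) + D$ for every $n \in \N^+$. Taking the supremum over $n$ and using the hypothesis $\sup_{n\geq 1} \E(g(|X_n|)) < \infty$ gives $\sup_{n\geq 1}\E(h(|X_n|)) < \infty$, as required.

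The argument is essentially an exercise, so there is no real obstacle; the only point requiring slight care is bridging from the asymptotic bound on $h/g$ to a bound valid at every $x \geq 0$, which is handled cleanly by the monotonicity of $h$ at the cost of an additive constant. Note that monotonicity of $g$ is not used, and positivity of $g$ ensures the ratio $h/g$ is well defined so that the $\limsup$ hypothesis is meaningful.
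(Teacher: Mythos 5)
Your proof is correct, and since the paper states Fact \ref{f:2} without any proof (treating it as elementary), there is nothing in the paper to compare against; your argument is exactly the natural one the authors had in mind. The only minor remark: one should note that $h(x_0)<\infty$ is automatic because $h$ maps into $\R^+$, and the additive constant $D$ is harmless since the final supremum bound $C\sup_n \E g(|X_n|)+D$ is still finite.
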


The concept of martingale and the following inequality are due to J.~L.~Doob, see e.g.\ \cite[Theorem~5.4.2]{D}.

\begin{theorem}[Doob's inequality]
Let $\{X_i: 1\leq i\leq n\}$ be a finite martingale difference sequence and let $p\geq 1$. Then for all $t>0$ we have 
\begin{equation*}
\P(M_n\geq t)\leq \frac{\E(|S_n|^p)}{t^p}.
\end{equation*}  
\end{theorem}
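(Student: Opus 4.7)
The plan is to reduce the stated inequality to the $L^1$ form of Doob's maximal inequality applied to the nonnegative submartingale $Y_i := |S_i|^p$. First I would note that since $\{X_i\}_{1\le i\le n}$ is a martingale difference sequence with respect to a filtration $\{\iF_i\}$, the partial sums $\{S_i\}$ form a martingale. Because $x\mapsto |x|^p$ is convex for $p\ge 1$, conditional Jensen's inequality gives
\[
\E(|S_{i+1}|^p \mid \iF_i) \ge |\E(S_{i+1}\mid \iF_i)|^p = |S_i|^p,
\]
so $\{Y_i\}$ is indeed a nonnegative submartingale with respect to $\{\iF_i\}$.

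Next I would run the classical stopping-time argument on $\{Y_i\}$. Set $A := \{\max_{1\le i\le n} Y_i \ge t^p\}$ and define the stopping time $\tau := \min\{i\le n : Y_i \ge t^p\}$, with the convention $\tau = n$ on $A^c$. The events $\{\tau = i\}$ lie in $\iF_i$, so the submartingale property yields $\E(Y_n I(\tau=i)) \ge \E(Y_i I(\tau=i))$ for every $i$. Summing over $i=1,\dots,n$ gives $\E(Y_n) \ge \E(Y_\tau)$. Since $Y_\tau \ge t^p$ on $A$, we obtain
\[
\E(|S_n|^p) = \E(Y_n) \ge \E(Y_\tau I(A)) \ge t^p \P(A).
\]
Finally, because $t\mapsto t^p$ is increasing on $[0,\infty)$, we have $\{M_n\ge t\} = \{\max_{1\le i\le n} |S_i|^p \ge t^p\} = A$, and rearranging the displayed inequality yields the claimed bound.

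I do not expect any real obstacle: this is the standard proof of Doob's maximal inequality, and the only step that requires some care is the measurability bookkeeping around the stopping time together with the verification that the submartingale defining inequality $\E(Y_n I(B)) \ge \E(Y_i I(B))$ for $B\in\iF_i$ is available in the discrete, finite-index setting we are in. Both of these are routine and require no new ideas beyond the convexity observation that turns the martingale $\{S_i\}$ into the submartingale $\{|S_i|^p\}$.
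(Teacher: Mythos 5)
Your proof is correct: it is the standard stopping-time argument for Doob's maximal inequality, applied to the nonnegative submartingale $\{|S_i|^p\}_{1\le i\le n}$ obtained from the martingale $\{S_i\}$ via conditional Jensen's inequality. The paper does not prove this statement itself but cites it directly from Durrett \cite[Theorem~5.4.2]{D}, so there is no alternative proof in the paper to compare against; your argument is precisely the textbook one referenced there.
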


 \section{Technical lemmas} \label{s:tl} 
  
  \begin{lemma} \label{l:an} Let $\{a_n\}_{n\geq 1}$ be a positive, non-increasing sequence such that
  	\[\sum_{n=1}^{\infty} a_n<\infty.\]
  	Then there is a non-increasing sequence $\{b_n\}_{n\geq 1}$ such that
  	\begin{enumerate}[(1)]
  		\item \label{eq:l1} $b_n\geq a_n$ for all $n\geq 1$,
  		\item \label{eq:l2} $\lim_{n\to \infty} b_{n}/b_{n-1} =1$,
  		\item \label{eq:l3} $\sum_{n=1}^{\infty} b_n<\infty$.
  	\end{enumerate}
  \end{lemma}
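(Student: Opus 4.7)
My plan is to let $b_n$ be constant on a sequence of blocks determined by $\{a_n\}$, with the constants drawn from a fixed slowly varying template such as $\beta_k = C/k^{2}$ with $C = \max(a_1,1)$. Concretely, I would set $N_0 = 0$ and define $N_k$ recursively by
\[
N_k = \min\{n > N_{k-1} : a_{n+1} \leq \beta_{k+1}\};
\]
this is well defined because $a_n \to 0$, and it yields a strictly increasing sequence. Then I would declare $b_n = \beta_k$ for $n$ in the block $I_k = (N_{k-1}, N_k]$.

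With this definition properties (\ref{eq:l1}) and (\ref{eq:l2}) come out essentially for free. Dominance $b_n \geq a_n$ follows because the minimality in the definition of $N_{k-1}$ forces $a_{N_{k-1}+1} \leq \beta_k$ and $\{a_n\}$ is non-increasing, so every value on $I_k$ is bounded by $\beta_k$. Monotonicity holds since $b$ is constant on each block and the template $\beta_k$ strictly decreases. The ratio $b_n/b_{n-1}$ takes only the values $1$ (in the interior of a block) and $\beta_k/\beta_{k-1} = (1-1/k)^2$ (at a block boundary $n = N_{k-1}+1$), and because $N_k \to \infty$ one has $k \to \infty$ as $n \to \infty$, which forces the ratio to tend to $1$.

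The main obstacle I expect is summability (\ref{eq:l3}): a priori the block lengths $L_k = N_k - N_{k-1}$ could be large enough that $\sum L_k \beta_k$ diverges. The key is to squeeze a lower bound on $\sum_{n\in I_k}a_n$ out of the minimality in the definition of $N_k$. For every $n \in I_k$ other than $N_{k-1}+1$, minimality forces $a_n > \beta_{k+1}$, which gives
\[
\sum_{n\in I_k} a_n \;\geq\; (L_k-1)\,\beta_{k+1}.
\]
Summing over $k$ and using $\sum_n a_n < \infty$ shows $\sum_k (L_k-1)/k^2 < \infty$; combined with $\sum_k 1/k^2 < \infty$ this yields $\sum_k L_k/k^2 < \infty$, i.e. $\sum b_n < \infty$. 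The template $\beta_k = C/k^2$ is essentially canonical here: it must decay fast enough to remain summable after multiplication by the worst-case block lengths, yet slowly enough to satisfy $\beta_k/\beta_{k-1} \to 1$, and $k^{-2}$ is the simplest choice meeting both constraints.
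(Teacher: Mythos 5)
Your construction is correct, and it is genuinely different from the paper's. The paper proceeds recursively: it fixes a sequence $c_k \nearrow 1$, picks block endpoints $n_k$ satisfying both a tail-sum bound and a geometric smallness bound, and then sets $b_n = \max\{a_n, c_k b_{n-1}\}$ on each block; summability then requires a careful bookkeeping of the contributions of each geometric run emanating from a ``reset'' point where $b$ touches $a$. You instead make $b$ piecewise constant, pinned to the explicit template $\beta_k = C/k^2$, and let the data $\{a_n\}$ choose the block lengths $L_k$ via a stopping rule. Properties (1) and (2) are immediate in both constructions. For property (3) your argument is shorter and cleaner: the stopping rule gives the lower bound $\sum_{n\in I_k} a_n \geq (L_k - 1)\beta_{k+1}$, from which $\sum_k L_k/k^2 < \infty$ drops out directly, with no need to track geometric tails. (One small wording issue: the bound $a_{N_{k-1}+1} \leq \beta_k$ is not a consequence of the \emph{minimality} of $N_{k-1}$ but simply of its membership in the defining set $\{n > N_{k-2} : a_{n+1} \leq \beta_k\}$; minimality is what you correctly invoke for the \emph{opposite} inequality $a_n > \beta_{k+1}$ on the rest of the block. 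And the $k=1$ base case of dominance comes from the choice $C = \max(a_1,1)$ rather than from any stopping argument.) The trade-off is that the paper's $b_n$ hugs $a_n$ from above (it equals $a_n$ infinitely often), which may matter if one wants a near-optimal majorant, whereas your $b_n$ is coarser; but for the three stated properties your construction is simpler and arguably more transparent.
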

  
  \begin{proof} Fix a positive sequence $\{c_k\}_{k\geq 1}$ such that $c_k \nearrow 1$. We can choose an increasing sequence of
  	positive integers $\{n_k\}_{k\geq 1}$
  	such that for all $k\in \N^+$ we have
  	\begin{equation} \label{ck2} \sum_{n=n_k}^{\infty} a_n<2^{-k}(1-c_{k+1})
  	\end{equation}
  	and
  	\begin{equation} \label{ck1} c_k^{n_{k+1}-n_{k}}\leq 2^{-k}(1-c_{k+1}).
  	\end{equation}
  	We will construct $b_n$ recursively. Let $b_n=a_n$ if $n\leq n_1$. For every $k\in \N^+$ and
  	$n_k< n\leq n_{k+1}$ define
  	\[b_{n}=\max\{a_{n}, c_k b_{n-1}\}.\]
  	Then clearly \eqref{eq:l1} holds and $b_{n}\leq b_{n-1}$ for all $n\leq n_1$.
  	Assume $n_k<n\leq n_{k+1}$ for some $k$. Then $a_n\leq a_{n-1}\leq b_{n-1}$ and our definition imply that
  	\[b_{n}=\max\{a_{n}, c_k b_{n-1}\}\leq \max\{b_{n-1}, c_k b_{n-1}\}=b_{n-1},\]
  	so $b_n$ is non-increasing.
  	
  	Now we show \eqref{eq:l2}. Assume that $n>n_k$. Let $n_m<n\leq n_{m+1}$ for some $m\geq k$. As the sequence $c_k$ is monotone increasing,
  	we have
  	\[c_k b_{n-1}\leq c_m b_{n-1} \leq b_n\leq b_{n-1},\]
  	so $c_k\leq b_n/b_{n-1}\leq 1$. Then $c_k \nearrow 1$ yields \eqref{eq:l2}.
  	
  	Finally, we prove \eqref{eq:l3}. For all $n\geq n_1$ define $d_{n,n}=a_n$ and for $i\geq n+1$ recursively define
  	\[d_{n,i}=c_k d_{n,i-1} \quad \textrm{if} \quad n_k< i\leq n_{k+1}.\]
  	Let $\ell \geq n_1$ be fixed. Let $n=n(\ell)$ be the largest integer such that $n_1\leq n \leq \ell$ and $b_n=a_n$. As $b_{n_1}=a_{n_1}$, we obtain that
  	$n$ exists. Then $b_{\ell}=d_{n,\ell}$ by our definitions. As the map $\ell \mapsto d_{n(\ell), \ell}$ is clearly one-to-one, we have
  	\begin{equation} \label{eq:dni} \sum_{n\geq n_1} b_n \leq \sum_{n\geq n_1} \sum_{i=n}^{\infty} d_{n,i}.
  	\end{equation}
  	Fix $k,n\in \N^+$ such that $n_k<n\leq n_{k+1}$. By definition
  	\begin{align} \label{eq:nk1}
  	\begin{split}
  	\sum_{i=n}^{n_{k+2}} d_{n,i}&=a_n\left(\sum_{i=0}^{n_{k+1}-n} c_k^i+c_{k}^{n_{k+1}-n} \sum_{i=1}^{n_{k+2}-n_{k+1}} c_{k+1}^i\right) \\
  	&\leq a_n\left(\frac{1}{1-c_{k}}+\frac{1}{1-c_{k+1}}\right)\leq \frac{2a_n}{1-c_{k+1}}.
  	\end{split}
  	\end{align}
  	For each $j\geq 2$ the definition of $d_{n,i}$ and \eqref{ck1} imply that
  	\begin{align} \label{eq:nk2}
  	\begin{split}
  	\sum_{i=n_{k+j}+1}^{n_{k+j+1}} d_{n,i}&=a_n c_{k}^{n_{k+1}-n} \left(\prod_{i=1}^{j-1} c_{k+i}^{n_{k+i+1}-n_{k+i}}\right)
  	\sum_{\ell=1}^{n_{k+j+1}-n_{k+j}} c_{k+j}^{\ell} \\
  	&\leq a_n c_{k+j-1}^{n_{k+j}-n_{k+j-1}}(1-c_{k+j})^{-1} \leq a_n 2^{-k-j+1}.
  	\end{split}
  	\end{align}
  	
  	By \eqref{eq:nk1} and \eqref{eq:nk2} for all $n\geq n_1$ we obtain
  	
  	\begin{equation} \label{eq:sumd} \sum_{i=n}^{\infty} d_{n,i}\leq \frac{2a_n}{1-c_{k+1}}+a_n \sum_{j=2}^{\infty} 2^{-k-j+1}\leq \frac{3a_n}{1-c_{k+1}}.
  	\end{equation}
  	Therefore \eqref{eq:dni}, \eqref{eq:sumd}, and \eqref{ck2} imply that
  	\[\sum_{n\geq n_1} b_n \leq \sum_{k=1}^{\infty} \sum_{n=n_k+1}^{n_{k+1}}  \sum_{i=n}^{\infty} d_{n,i}\leq \sum_{k=1}^{\infty} \sum_{n=n_k+1}^{n_{k+1}} \frac{3a_n}{1-c_{k+1}} \leq 3\sum_{k=1}^{\infty} 2^{-k}<\infty.\]
  	Thus \eqref{eq:l3} holds, and the proof is complete.
  \end{proof}
  
  \begin{corollary} \label{c:f} Let $g\colon [0,\infty)\to \R^+$ be a non-decreasing function such that
  	\[\sum_{n=1}^{\infty} \frac{1}{g(2^n)}<\infty.\]
  	Then there is a non-decreasing function $f\colon [0,\infty)\to \R^+$ such that
  	\begin{enumerate}[(i)]
  		\item \label{eq:c1} $\sum_{n=1}^{\infty} 1/f(2^n)<\infty$,
  		\item \label{eq:c2}  $\lim_{n\to \infty} f(2^{n+1})/f(2^n)=1$,  
  		\item \label{eq:c3} $\limsup_{x\to \infty} f(x)/g(x)\leq 1$.
  	\end{enumerate}
  \end{corollary}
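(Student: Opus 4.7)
The plan is to apply Lemma~\ref{l:an} to the sequence $a_n = 1/g(2^n)$. Since $g$ is non-decreasing, the sequence $\{a_n\}_{n \geq 1}$ is positive and non-increasing, and by hypothesis $\sum a_n < \infty$. Lemma~\ref{l:an} then yields a non-increasing sequence $\{b_n\}_{n\geq 1}$ of positive reals satisfying $b_n \geq a_n$, $\lim_{n\to\infty} b_n/b_{n-1} = 1$, and $\sum b_n < \infty$.

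Next, I would define $f$ at the dyadic points by $f(2^n) = 1/b_n$, and extend it to $[0,\infty)$ as a right-continuous step function by setting $f(x) = f(2^n)$ for $2^n \leq x < 2^{n+1}$ and $n \geq 1$, together with $f(x) = f(2)$ on $[0,2)$. Because $\{b_n\}$ is non-increasing, the values $f(2^n) = 1/b_n$ are non-decreasing in $n$, so $f$ is non-decreasing on $[0,\infty)$. Properties \eqref{eq:c1} and \eqref{eq:c2} follow at once: $\sum_{n\geq 1} 1/f(2^n) = \sum_{n \geq 1} b_n < \infty$, and $f(2^{n+1})/f(2^n) = b_n/b_{n+1} \to 1$.

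For \eqref{eq:c3}, I would observe that $f(2^n) = 1/b_n \leq 1/a_n = g(2^n)$. Given $x \geq 2$, choose $n \geq 1$ with $2^n \leq x < 2^{n+1}$; using that $g$ is non-decreasing,
\[
\frac{f(x)}{g(x)} = \frac{f(2^n)}{g(x)} \leq \frac{g(2^n)}{g(x)} \leq 1,
\]
so $\limsup_{x\to\infty} f(x)/g(x) \leq 1$, as required.

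The heavy lifting has already been done in Lemma~\ref{l:an}; the only thing that needs some care here is choosing the extension from the dyadic values to all of $[0,\infty)$ so as to preserve both monotonicity and the pointwise upper bound by $g$, which is precisely why the right-continuous step function on dyadic intervals is the natural choice.
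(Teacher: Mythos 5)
Your proposal is correct and follows essentially the same route as the paper: apply Lemma~\ref{l:an} to $a_n = 1/g(2^n)$, set $f(2^n) = 1/b_n$, and extend to all of $[0,\infty)$. The only cosmetic difference is that you commit to the right-continuous step-function extension, which makes \eqref{eq:c3} immediate (indeed $f(x)/g(x) \le 1$ for all $x \ge 2$), whereas the paper takes an arbitrary non-decreasing extension and derives \eqref{eq:c3} from \eqref{eq:c2} via the bound $f(x)/g(x) \le f(2^{n+1})/f(2^n)$.
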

  
  \begin{proof}  First we define $f(2^n)$ for all $n\in \N^+$. We apply Lemma~\ref{l:an} for the sequence
  	$a_n=1/g(2^n)$, let $\{b_n\}_{n\geq 1}$ be a sequence satisfying properties \eqref{eq:l1}--\eqref{eq:l3} in Lemma~\ref{l:an}.
  	Define $f(2^n)=1/b_n$ for all $n\in \N^+$. Then \eqref{eq:c1} holds by 
  	\eqref{eq:l3}, and by \eqref{eq:l2} for all $n$ we have
  	\begin{equation} \label{eq:clim} \lim_{n\to  \infty} \frac{f(2^{n+1})}{f(2^n)} =\lim_{n\to \infty} \frac{b_{n}}{b_{n+1}}=1,
  	\end{equation}
  	so \eqref{eq:c2} is satisfied. Since $b_n$ is non-increasing, the sequence $\{f(2^n)\}_{n\geq 1}$ is non-decreasing. 
  	Let $f\colon [0,\infty)\to \R^+$ be any non-decreasing function extending the sequence $\{f(2^n)\}_{n\geq 1}$. 
  	Clearly for all $n\in \N^+$ we have
  	\begin{equation} \label{eq:fbn} f(2^n)=\frac{1}{b_n}\leq \frac{1}{a_n}=g(2^n).
  	\end{equation}
  	Thus monotonicity, \eqref{eq:fbn}, and \eqref{eq:clim} imply that for all $n\in \N^+$ and $2^n\leq x\leq 2^{n+1}$ we have 	
  	\[\frac{f(x)}{g(x)}\leq \frac{f(2^{n+1})}{g(2^n)} \leq \frac{f(2^{n+1})}{f(2^n)} \to 1\]
  	as $n\to \infty$. The proof is complete. 
  \end{proof}

  \begin{lemma} \label{l:conc} Let $g\colon [0,\infty)\to \R^+$ be a non-decreasing function such that
  	\[\sum_{n=1}^{\infty} \frac{1}{g(2^n)}<\infty.\]
  	Then there is a non-decreasing function $f\colon [0,\infty)\to \R^+$ such that
  	\begin{enumerate}[(i)]
  		\item \label{eq:L1} $\sum_{n=1}^{\infty} 1/f(2^n)<\infty$,
  		\item \label{eq:L1.5} $\lim_{n\to \infty} f(2^{n+1})/f(2^n)=1$, 
  		\item \label{eq:L2} $\limsup_{x\to \infty} f(x)/g(x)\leq 1$,
  		\item \label{eq:L3} $f$ has continuous second derivative on $(0,\infty)$,
  		\item \label{eq:L4} $f'(x)/f(x)=o(1/x)$ as $x\to \infty$,
  		\item \label{eq:L5} $f''(x)/f(x)=o(1/x^2)$ as $x\to \infty$.
  	\end{enumerate}
  \end{lemma}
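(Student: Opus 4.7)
My plan is to apply Corollary~\ref{c:f} to $g$ to obtain a non-decreasing $f_0\colon[0,\infty)\to\R^+$ satisfying the three conclusions of that corollary, and then to smooth $f_0$ on a logarithmic scale so as to gain the analytic regularity \eqref{eq:L3}--\eqref{eq:L5} without destroying its asymptotic behavior. Let $\phi\colon[1,\infty)\to\R$ be the continuous piecewise linear function with $\phi(n)=\log f_0(2^n)$ at integers, and let $s_n=\phi(n+1)-\phi(n)=\log\bigl(f_0(2^{n+1})/f_0(2^n)\bigr)$ denote its slope on $[n,n+1]$. Then $s_n\geq 0$ and $s_n\to 0$ by Corollary~\ref{c:f}\eqref{eq:c2}. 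Fix a nonnegative $C^\infty$ bump $\rho$ supported in $[-1/4,1/4]$ with $\int\rho=1$, set $\tilde\phi=\phi\ast\rho$ on $[2,\infty)$, and define $f(x)=\exp\bigl(\tilde\phi(\log_2 x)\bigr)$ for $x\geq 4$, extended on $[0,4]$ to any non-decreasing positive $C^2$ function matching smoothly at $x=4$.

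The crucial step is the derivative estimates on $\tilde\phi$. The weak derivative $\phi'$ equals $s_n$ on $(n,n+1)$, so $\tilde\phi'(t)=(\phi'\ast\rho)(t)$ is a convex combination of at most two consecutive slopes, whence $\tilde\phi'(t)\to 0$. For $\tilde\phi''(t)=(\phi'\ast\rho')(t)$: if $[t-1/4,t+1/4]$ contains no integer, $\phi'$ is constant there and $\int\rho'=0$ forces $\tilde\phi''(t)=0$; if the window contains an integer $n$, a direct computation yields $\tilde\phi''(t)=(s_n-s_{n-1})\rho(t-n)$, which tends to $0$ because $|s_n-s_{n-1}|\leq s_n+s_{n-1}\to 0$.

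Next I verify \eqref{eq:L1}--\eqref{eq:L5}. Property \eqref{eq:L3} is immediate from the construction. Since $|\tilde\phi(t)-\phi(t)|\leq\sup_{|s|\leq 1/4}|\phi(t-s)-\phi(t)|=o(1)$, one has $f(2^n)=(1+o(1))f_0(2^n)$, which gives \eqref{eq:L1} from Corollary~\ref{c:f}\eqref{eq:c1}; combined with $\tilde\phi(n+1)-\tilde\phi(n)\to 0$ (a consequence of $\tilde\phi'\to 0$), this yields \eqref{eq:L1.5}. For \eqref{eq:L2}, monotonicity of $\phi$ gives $\tilde\phi(\log_2 x)\leq\phi(n+2)=\log f_0(2^{n+2})$ whenever $x\in[2^n,2^{n+1}]$, and the ratio $f_0(2^{n+2})/f_0(2^n)\to 1$ forces $f(x)\leq(1+o(1))f_0(x)$; Corollary~\ref{c:f}\eqref{eq:c3} then finishes. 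Finally, the chain rule expresses $f'/f$ and $f''/f$ as rational combinations of $\tilde\phi'(\log_2 x)$ and $\tilde\phi''(\log_2 x)$ divided by $x$ and $x^2$ respectively, so $\tilde\phi',\tilde\phi''\to 0$ immediately gives \eqref{eq:L4} and \eqref{eq:L5}.

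I expect the second-derivative decay to be the main obstacle: bounding $\tilde\phi''$ requires controlling successive differences of slopes rather than the slopes themselves, which is handled here by the trivial implication $s_n\to 0\Rightarrow s_n-s_{n-1}\to 0$ together with the cancellation $\int\rho'=0$ that kills the constant part of $\phi'$ between integers. The choice of piecewise-linear (rather than stepwise) interpolation is essential so that $\phi'$ is bounded, and the compactness of $\supp\rho$ guarantees that the convolution window meets at most two consecutive slope values at a time.
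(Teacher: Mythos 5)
Your proof is correct and gives a genuinely different construction from the paper's. The paper applies Corollary~\ref{c:f} to normalize $g$, sets $f(2^n)=g(2^n)$, and interpolates on each dyadic block $[2^n,2^{n+1}]$ by an explicit integral of $r_n(x)=q_n(1-\cos(2^{1-n}\pi x))$; the vanishing of $r_n$ and $r_n'$ at the block endpoints yields $C^2$ regularity directly, and the derivative bounds \eqref{eq:L4}--\eqref{eq:L5} are read off from $r_n(x)\le 2q_n$ and $|r_n'(x)|\le 2^{1-2n}\pi(f(2^{n+1})-f(2^n))$ together with $f(2^{n+1})/f(2^n)\to 1$. Your route works multiplicatively on the logarithmic scale: piecewise-linear interpolation of $\log f_0(2^n)$, mollification by a compactly supported bump $\rho$, and then exponentiation back. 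The key computations check out: $\tilde\phi'$ is a convex combination of the two adjacent slopes $s_{n-1},s_n$, so $\tilde\phi'\to 0$; and your formula $\tilde\phi''(t)=(s_n-s_{n-1})\rho(t-n)$ when the window contains the integer $n$ is correct (it follows from $\rho(\pm 1/4)=0$), giving $\tilde\phi''\to 0$. Passing to $f=\exp\circ\tilde\phi\circ\log_2$ then converts $\tilde\phi',\tilde\phi''\to 0$ into \eqref{eq:L4}--\eqref{eq:L5} by the chain rule, and the comparison $f(2^n)=(1+o(1))f_0(2^n)$ plus the block estimate $f(x)\le f_0(2^{n+2})\le(1+o(1))f_0(x)$ yield \eqref{eq:L1}, \eqref{eq:L1.5}, and \eqref{eq:L2} from Corollary~\ref{c:f}. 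What your approach buys is systematicity: the mollification trick would just as easily give $C^k$ for any $k$, at the cost of slightly more bookkeeping than the paper's closed-form interpolant, which is hand-tuned to vanish to second order at the dyadic points. One small point worth spelling out: when extending $f$ to $[0,4]$ you need a $C^2$ increasing positive function matching $f(4),f'(4),f''(4)$ at the junction; this always exists since $f(4)>0$ and $f'(4)\ge 0$, but the paper sidesteps even this by designing $f$ so that $f'(2^+)=f''(2^+)=0$, letting it be constant near the origin with no matching to check.
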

  
  \begin{proof} By Corollary~\ref{c:f} we may assume that 
  	\begin{equation} \label{eq:g} \lim_{n\to \infty} \frac{g(2^{n+1})}{g(2^n)}=1.
  	\end{equation} 
  	Let $f(2^n)=g(2^n)$ for all $n\in \N^+$, then clearly \eqref{eq:L1} and \eqref{eq:L1.5} hold. 
  	Each non-decreasing function $f\colon [0,\infty)\to \R^+$ extending the sequence $\{f(2^n)\}_{n\geq 1}$
  	satisfies \eqref{eq:L2}. Indeed, monotonicity, $f(2^{n+1})=g(2^{n+1})$, and \eqref{eq:g} imply that for every $n\in \N^+$ and 
  	$2^n\leq x\leq 2^{n+1}$ we have
  	\[\frac{f(x)}{g(x)}\leq \frac{f(2^{n+1})}{g(2^n)}=\frac{g(2^{n+1})}{g(2^n)} \to 1\]
  	as $n\to \infty$.
  	
  	Let $f(x)=f(2)$ for $0\leq x\leq 2$, and let $n\in \N^+$ be fixed.
  	For $0\leq x \leq 2^n$ define
  	\[r_n (x)=q_n (1-\cos(2^{1-n}\pi x)),\]
  	and let us define
  	\[f(2^n+x)=f(2^n)+\int_{0}^{x} r_n(t) \, \mathrm{d} t.\]
  	Then clearly
  	\[f(2^{n+1})-f(2^n)=\int_{0}^{2^{n}} r_n(t) \, \mathrm{d} t=2^n q_n ,\]
  	so
  	\begin{equation} \label{rn} q_n=2^{-n} (f(2^{n+1})-f(2^n)).
  	\end{equation}
  	Since $r_{n}(x)\geq 0$ for all $n\in \N^+$ and $0\leq x\leq 2^n$, the function $f$ is non-decreasing.
  	As $r_n(0)=r_n(2^n)=0$, we obtain that $f$ is continuously differentiable such that
  	$f'(x)=0$ if $0<x\leq 2$ and $f'(2^n+x)=r_n(x)$ for all $n\in \N^+$.
  	It is easy to see that $f''(2^n)=0$ for all $n\in \N^+$, so the formula of $f'(x)$ implies that $f'$ is continuously differentiable,
  	thus \eqref{eq:L3} holds. Let $n\in \N^+$ and $0\leq x \leq 2^n$. Then $r_n(x)\leq 2q_n$, \eqref{rn}, and \eqref{eq:L1.5} yield that
  	\begin{align*} \frac{f'(2^n+x)}{f(2^n+x)}&=\frac{r_{n}(x)}{f(2^n+x)} \leq 2^{1-n} \frac{f(2^{n+1})-f(2^n)}{f(2^n+x)} \\
  	&\leq \frac{4}{2^n+x} \frac{f(2^{n+1})-f(2^n)}{f(2^n)}=o\left(\frac{1}{2^n+x}\right)
  	\end{align*}
  	as $n\to \infty$, hence \eqref{eq:L4} is satisfied.
  	Let $n\in \N^+$ and $0\leq x \leq 2^n$. Clearly
  	\[|r_n'(x)|\leq 2^{1-2n} \pi (f(2^{n+1})-f(2^n)),\]
  	so \eqref{eq:L1.5} implies that
  	\begin{align*} \frac{|f''(2^n+x)|}{f(2^n+x)}&=\frac{|r'_{n}(x)|}{f(2^n+x)} \leq 2^{1-2n} \pi \frac{f(2^{n+1})-f(2^n)}{f(2^n+x)}
  	\\
  	&\leq \frac{8\pi}{(2^n+x)^2} \frac{f(2^{n+1})-f(2^n)}{f(2^n)}=o\left(\frac{1}{(2^n+x)^2}\right)
  	\end{align*}
  	as $n\to \infty$, so \eqref{eq:L5} holds. The proof is complete.
  \end{proof}
  
  \begin{corollary} \label{c:1}  Let $g\colon [0,\infty)\to \R^+$ be a non-decreasing function such that
  	\[\sum_{n=1}^{\infty} \frac{1}{g(2^n)}<\infty.\]
  	Then there is a non-decreasing function $f\colon [0,\infty)\to \R^+$ such that
  	\begin{enumerate}
  		\item \label{eq:h0} $\sum_{n=1}^{\infty} 1/f(2^n)<\infty$,
  		\item \label{eq:h0.5} $\lim_{n\to \infty} f(2^{n+1})/f(2^n)=1$, 
  		\item \label{eq:h1} $\limsup_{x\to \infty} f(x)/g(x)\leq 1$,
  		\item \label{eq:hc} for all $c>0$ there is an $R_c>0$ such that the function $h_c(x)=x^{-c} f(x)$ is decreasing for $x\geq R_c$,
  		\item \label{eq:gp} for all $0<p<1$ there is a 
  		concave increasing function $g_p\colon [0,\infty)\to \R^+$ and $N_p>0$ such that $g_p(x)=x^p f(x)$ for all $x\geq N_p$,
  		\item \label{eq:gq} for all $q>1$ there is a convex increasing function $g_q \colon [0,\infty)\to \R^+$ and $N_q>0$ such that $g_q$ is affine on $[0,N_q]$ and 
  		$g_q(x)=x^q f(x)$ for all $x\geq N_q$.
  	\end{enumerate}
  \end{corollary}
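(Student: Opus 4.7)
The plan is to apply Lemma~\ref{l:conc} to $g$, producing a non-decreasing $f\colon [0,\infty)\to\R^+$ that already satisfies items \eqref{eq:h0}, \eqref{eq:h0.5}, \eqref{eq:h1} of the corollary (these are just \eqref{eq:L1}, \eqref{eq:L1.5}, \eqref{eq:L2} from the lemma) and in addition enjoys the smoothness estimates $f\in C^2(0,\infty)$, $f'(x)/f(x)=o(1/x)$, and $f''(x)/f(x)=o(1/x^2)$. The remaining items \eqref{eq:hc}, \eqref{eq:gp}, \eqref{eq:gq} will be derived by elementary calculus from these asymptotics.

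For \eqref{eq:hc}, I would differentiate $h_c(x)=x^{-c}f(x)$ to obtain $h_c'(x)=x^{-c-1}\bigl(xf'(x)-c f(x)\bigr)$; since $xf'(x)/f(x)\to 0$ by \eqref{eq:L4}, this is negative once $x\geq R_c$ for a suitable $R_c$, so $h_c$ is decreasing there.

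For \eqref{eq:gp} and \eqref{eq:gq}, the key computation is
\[\frac{d^2}{dx^2}\bigl(x^s f(x)\bigr)=x^{s-2}\Bigl(s(s-1)f(x)+2sxf'(x)+x^2 f''(x)\Bigr).\]
Dividing the bracket by $f(x)>0$ and applying \eqref{eq:L4} and \eqref{eq:L5}, it equals $s(s-1)+o(1)$ as $x\to\infty$. For $s=p\in(0,1)$ this is eventually negative, so $x\mapsto x^p f(x)$ is increasing and concave on some $[N_p',\infty)$; for $s=q>1$ it is eventually positive, so $x\mapsto x^q f(x)$ is increasing and convex on some $[N_q',\infty)$.

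To complete both items I would extend across $[0,N]$ (with $N\in\{N_p,N_q\}$ to be chosen $\geq N_p'$ or $N_q'$ respectively) by the \emph{line through the origin} matching the value at $N$, namely $x\mapsto (N^{s-1}f(N))\,x$. This automatically gives a positive, increasing, affine piece on $[0,N]$. The only delicate point is that the combined function remain concave (for \eqref{eq:gp}) or convex (for \eqref{eq:gq}) at the junction $N$, which amounts to comparing the left slope $N^{s-1}f(N)$ with the right slope $N^{s-1}f(N)\bigl(s+Nf'(N)/f(N)\bigr)$. For \eqref{eq:gp} concavity requires $p+Nf'(N)/f(N)\leq 1$, and for \eqref{eq:gq} convexity requires $q+Nf'(N)/f(N)\geq 1$; since $p<1<q$ and $Nf'(N)/f(N)=o(1)$ by \eqref{eq:L4}, both inequalities hold as soon as $N$ is large enough. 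The main obstacle is precisely this gluing step: choosing the linear extension through the origin (rather than, say, the tangent line at $N$) keeps positivity free and reduces the whole matching question to the single asymptotic estimate $xf'(x)/f(x)=o(1)$ already supplied by Lemma~\ref{l:conc}.
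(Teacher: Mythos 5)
Your overall route is the same as the paper's: apply Lemma~\ref{l:conc} to obtain a $C^2$ representative $f$ with $xf'(x)/f(x)\to 0$ and $x^2 f''(x)/f(x)\to 0$, read off \eqref{eq:h0}--\eqref{eq:h1} from \eqref{eq:L1}--\eqref{eq:L2}, and then derive \eqref{eq:hc}, \eqref{eq:gp}, \eqref{eq:gq} from the sign of $\frac{d^2}{dx^2}\bigl(x^s f(x)\bigr) = x^{s-2}f(x)\bigl(s(s-1)+o(1)\bigr)$ together with a monotone affine continuation of $x^s f(x)$ across $[0,N]$. Your derivative computations and your verification that the slopes match up correctly at the junction (left slope $\geq$ or $\leq$ right slope according to whether $s<1$ or $s>1$, which both reduce to $xf'(x)/f(x)=o(1)$) are exactly what is needed; this part is fine.

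There is one concrete flaw: you extend by the chord through the origin, giving $g_p(0)=g_q(0)=0$, whereas the statement requires $g_p,g_q\colon[0,\infty)\to\R^+$, i.e.\ strictly positive values, so your $g_p$ and $g_q$ miss the codomain at the single point $0$. The paper avoids this by extending $x^p f(x)$ with its \emph{tangent} at $N_p$ and, in the convex case, with a slope $\eps_q$ chosen strictly smaller than $f_q'(N_q)$; it then checks $g_p(0)=f_p(N_p)-N_p f_p'(N_p)>0$ via the estimate $f_p'(x)/f_p(x)<1/x$, and $g_q(0)=f_q(N_q)-\eps_q N_q>0$ by choice of $\eps_q$. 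Concavity (resp.\ convexity) at the junction is then automatic since the continuation is the tangent line (resp.\ lies below the tangent). So the fix is small: replace your chord through $(0,0)$ by the tangent for $g_p$, and by any line of slope $<f_q'(N_q)$ with positive intercept for $g_q$. (As a remark, $g_p(0)=0$ is actually what is used later in the proof of Theorem~\ref{t:arb1} to get subadditivity, and concave plus $g_p(0)\geq 0$ already suffices there; but as stated, the corollary asks for values in $\R^+$, and that is what your construction fails.)
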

  
  \begin{proof}
  	Let us choose a non-decreasing function $f\colon [0,\infty) \to \R^+$ for which properties \eqref{eq:L1}--\eqref{eq:L5} of Lemma~\ref{l:conc} hold.
  	Then clearly $f$ satisfies \eqref{eq:h0}, \eqref{eq:h0.5}, and \eqref{eq:h1}. First we prove property \eqref{eq:hc}. By \eqref{eq:L4} of Lemma~\ref{l:conc} we have
  	\[h'_c(x)=-cx^{-c-1}f(x)+x^{-c} f'(x)=x^{-c-1}f(x)(-c+o(1))<0\]
  	if $x\geq R_c$ with some constant $R_c>0$, which proves 
  	\eqref{eq:hc}.
  	
  	Now we show \eqref{eq:gp}. Let $f_p(x)=x^p f(x)$, using \eqref{eq:L4} and \eqref{eq:L5} of Lemma~\ref{l:conc} we obtain that
  	\begin{align} \label{eq:Kp}
  	\begin{split}
  	f''_p(x)&=p(p-1)x^{p-2} f(x)+2px^{p-1} f'(x)+x^p f''(x) \\
  	&=x^{p-2} f(x)(p(p-1)+o(1))<0
  	\end{split}
  	\end{align}
  	if $x\geq K_p$ with some $K_p>0$.
  	By \eqref{eq:L4} of Lemma~\ref{l:conc} we obtain that
  	\begin{equation} \label{eq:Lp} \frac{f'_p(x)}{f_p(x)}=\frac{px^{p-1} f(x)+x^p f'(x)}{x^p f(x)}=\frac{x^{p-1}f(x)(p+o(1))}{x^p f(x)}=\frac{p+o(1)}{x}<\frac{1}{x}
  	\end{equation}
  	and $f'_p(x)>0$ if $x\geq L_p$ with some $L_p>0$. Let $N_p=\max\{K_p,L_p\}$.
  	Define $g_p(x)=f_p(x)$ if $x\geq N_p$ and let $g_p$ be affine on
  	$[0,N_p]$ with slope $f_p'(N_p)>0$. By \eqref{eq:Kp} we have $f_p''(x)<0$ for $x\geq N_p$, so
  	$g_p$ is increasing and concave. We only need to show that $g_p(0)>0$. Indeed, by \eqref{eq:Lp} we obtain that
  	\[g_p(0)=f_p(N_p)-f_p'(N_p)N_p>0.\]
  	Thus \eqref{eq:gp} holds.
  	
  	Finally, we prove \eqref{eq:gq}. Let $f_q(x)=x^q f(x)$, similarly to \eqref{eq:Kp} we obtain that
  	\begin{equation*}
  	f''_q(x)=x^{q-2} f(x)(q(q-1)+o(1))>0
  	\end{equation*}
  	and $f'_q(x)>0$ if $x\geq N_q$ with some $N_q>0$. 
  	Choose $0<\eps_q<f'_q(N_q)$ such that 
  	$f_q(N_q)-\eps_q N_q>0$. Let $g_q(x)=f_q(x)$ if $x\geq N_q$ and let $g_q$ be affine on $[0,N_q]$ with slope $\eps_q$. 
  	Clearly $g_q$ is increasing and convex, so we only need to show that $g_q(0)>0$. Indeed, we have 
  	\[g_q(0)=f_q(N_q)-\eps_q N_q>0.\]
  	Hence \eqref{eq:gq} holds, and the proof is complete.
  \end{proof}

  \begin{lemma} \label{l:c} Let $g\colon [0,\infty)\to \R^+$ be a non-decreasing function such that
  	\[\sum_{n=1}^{\infty} \frac{1}{g(2^n)}<\infty.\]
  	Then there is a non-decreasing function $f\colon [0,\infty)\to \R^+$ such that
  	\begin{enumerate}[(i)]
  		\item \label{eq:f1} $\sum_{n=1}^{\infty} 1/f(2^n)<\infty$,
  		\item \label{eq:f1.5} $\lim_{n\to \infty} f(2^{n+1})/f(2^n)=1$, 
  		\item \label{eq:f2} $\limsup_{x\to \infty} f(x)/g(x)<\infty$,
  		\item \label{eq:f3} $h(x)=xf(x)$ is piecewise linear, increasing, and convex on $[0,\infty)$.
  	\end{enumerate}
  \end{lemma}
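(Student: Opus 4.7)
The plan is to first invoke Corollary~\ref{c:f} to produce a non-decreasing function $f_0\colon[0,\infty)\to\R^+$ satisfying \eqref{eq:f1}, \eqref{eq:f1.5}, and even $\limsup_{x\to\infty}f_0(x)/g(x)\leq 1$. The function $f_0$ supplies the correct order of magnitude; I would then build $f$ so that $h(x)=xf(x)$ is, by construction, the primitive of a non-decreasing positive step function, which forces it to be piecewise linear, increasing, and convex.

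Concretely, define $\phi\colon[0,\infty)\to\R^+$ by $\phi(x)=f_0(2)$ on $[0,2)$ and $\phi(x)=f_0(2^n)$ on $[2^n,2^{n+1})$ for $n\geq 1$, and set
\[
h(x)=\int_0^x\phi(t)\,dt,\qquad f(x)=h(x)/x\ \text{ for }x>0,\qquad f(0)=f_0(2).
\]
Since $\phi$ is positive and non-decreasing, $h$ is piecewise linear on the dyadic grid, strictly increasing, and convex, yielding property~\eqref{eq:f3} for free. That $f$ itself is non-decreasing is the standard fact that $h(x)/x$ is non-decreasing whenever $h$ is convex with $h(0)=0$.

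The heart of the argument is the asymptotic $f(2^n)\sim f_0(2^n)$, from which properties~\eqref{eq:f1}, \eqref{eq:f1.5}, and~\eqref{eq:f2} will follow by inheritance from $f_0$. Writing $a_k=2^kf_0(2^k)$ and $T_n=h(2^n)=2f_0(2)+\sum_{k=1}^{n-1}a_k$, one has $f(2^n)=T_n/2^n$, so the task reduces to proving $T_n\sim 2^nf_0(2^n)$. By \eqref{eq:c2} of Corollary~\ref{c:f}, $\alpha_n:=a_{n-1}/a_n\to 1/2$, and $L_n:=T_n/a_{n-1}$ obeys the linear recursion $L_{n+1}=1+\alpha_nL_n$. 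The formal fixed-point equation $L=1+L/2$ gives $L=2$, and the contraction estimate $|L_{n+1}-2|\leq\alpha_n|L_n-2|+|2\alpha_n-1|$, with $\alpha_n\leq 3/4$ eventually, forces $L_n\to 2$. This is the step I expect to be the main obstacle, since a crude monotonicity bound only places $T_n$ between $\frac12\cdot 2^nf_0(2^n)$ and $2^nf_0(2^n)$ with an unmatched constant, which would be too weak to imply property~\eqref{eq:f1.5}.

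Once $f(2^n)\sim f_0(2^n)$ is established the rest is routine. Property~\eqref{eq:f1.5} comes from $f(2^{n+1})/f(2^n)=T_{n+1}/(2T_n)=1/2+a_n/(2T_n)\to 1$; property~\eqref{eq:f1} follows from the summability of $1/f_0(2^n)$ together with the comparability of $f(2^n)$ and $f_0(2^n)$; and property~\eqref{eq:f2} is obtained by noting that the slow variation of $f_0$ makes both $f(x)$ and $f_0(x)$ comparable to $f_0(2^n)$ uniformly for $x\in[2^n,2^{n+1}]$, so the bound $\limsup f_0/g\leq 1$ transfers to $\limsup f/g<\infty$.
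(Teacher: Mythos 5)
Your proposal is correct, and it takes a genuinely different route from the paper's. The paper works with the sequence $a_n = g(2^n)$ directly and constructs a modified sequence $b_n$ by the recursion $b_{n+2}=\max\{a_{n+2},(3/2)b_{n+1}-(1/2)b_n\}$, then defines $f$ on each dyadic block by an explicit fractional-linear formula chosen precisely so that $h(x)=xf(x)$ becomes affine on $[2^n,2^{n+1}]$ with non-decreasing slopes $d_n=2b_{n+1}-b_n$; the recursion is engineered so that $d_{n+1}-d_n\ge 0$ is automatic, and a separate argument shows $b_n/a_n\le 2$ and $b_{n+1}/b_n\to 1$. You instead reverse the logic: build $h$ first as $h(x)=\int_0^x\phi$ for a non-decreasing positive step function $\phi$ with $\phi\equiv f_0(2^n)$ on $[2^n,2^{n+1})$, so that (iv) and the monotonicity of $f=h/x$ are structurally free, and then push the burden onto the asymptotic $f(2^n)\sim f_0(2^n)$, which you handle via the perturbed linear recursion $L_{n+1}=1+\alpha_n L_n$ with $\alpha_n\to 1/2$. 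The contraction step is sound (once one notes $L_n\ge1$ is eventually bounded above so $|L_N-2|<\infty$ for some $N$, the damped recursion $|L_{n+1}-2|\le\alpha_n|L_n-2|+|2\alpha_n-1|$ does force $L_n\to 2$), and the transfer of (i)--(iii) from $f_0$ to $f$ uses slow variation exactly as you say. The trade-off is clear: the paper's construction is more opaque but avoids any asymptotic estimate on partial sums, whereas your integral construction is conceptually cleaner at the price of a genuine (if elementary) limit computation $T_n\sim 2^nf_0(2^n)$. Both approaches rely on Corollary~\ref{c:f} as a preprocessing step to obtain the slow-variation property $f_0(2^{n+1})/f_0(2^n)\to 1$, which is essential in either route.
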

  
  \begin{proof} By Corollary~\ref{c:f} we may assume that 
  	\begin{equation} \label{glim} \lim_{n\to \infty} \frac{g(2^{n+1})}{g(2^n)}=1.
  	\end{equation}  
  	Define $a_n=g(2^n)$ for all $n\in \N^+$. Define the sequence $\{b_n\}_{n\geq 1}$ for all $n\in \N^+$ such that
  	$b_1=a_1$, $b_2=a_2$, and for all $n\in \N^+$ we recursively define
  	\begin{equation} \label{bdef} b_{n+2}=\max\left \{a_{n+2}, (3/2)b_{n+1}-(1/2)b_n\right\}.
  	\end{equation} 
  	The definition clearly implies that for all $n\in \N^+$ we have
  	\begin{equation} \label{eq:2bn} 2b_{n+2}-3b_{n+1}+b_{n}\geq 0.
  	\end{equation}
  	First we show that $b_n$ is non-decreasing. Indeed, $b_2\geq b_1$ and assume by induction that
  	$b_{n+1}\geq b_{n}$ for some $n\geq 1$, then $b_{n+2}\geq (3/2)b_{n+1}-(1/2)b_n\geq b_{n+1}$.
  	Now we prove that for all $n\in \N^+$ we have
  	\begin{equation} \label{eq:ab} \frac{b_n}{a_n}\leq 2.
  	\end{equation}
  	Fix an arbitrary integer $m\geq 3$ and let $k$ be the largest integer such that
  	$2\leq k \leq m$ and $b_{k}=a_{k}$. As $b_2=a_2$, we obtain that $k$ exists.
  	Let us define $\{c_n\}_{n\geq 0}$ such that $c_0=b_{k-1}$, $c_1=b_{k}$, and for all $n\in \N$ let
  	\[c_{n+2}=(3/2)c_{n+1}-(1/2)c_{n}.\]
  	Then clearly $b_{m}=c_{m-k+1}$. Solving the linear recursion for $c_n$
  	and using that $0<c_0\leq c_1$ we obtain for all $n\in \N$ that
  	\[c_n=2c_1-c_0+\frac{c_0-c_1}{2^{n-1}}<2c_1.\]
  	Thus $b_{m}=c_{m-k+1}<2c_1=2b_{k}=2a_{k}$. Therefore the monotonicity of the sequence $\{a_n\}_{n\geq 1}$ implies that
  	\begin{equation} \label{eq:bn} \frac{b_{m}}{a_{m}}\leq 2\frac{a_{k}}{a_{m}}\leq 2,
  	\end{equation}
  	so \eqref{eq:ab} holds.
  	
  	Let us define $f\colon [0,\infty) \to \R^+$ as follows.
  	Let $f(2^n)=b_{n}$ for all $n\in \N^+$, and let $f(x)=f(2)$ if $0\leq x\leq 2$. If $n\in \N^+$ and
  	$0\leq x\leq 2^n$ then let
  	\[f(x+2^n)=b_n+2(b_{n+1}-b_n)\frac{x}{x+2^n}.\]
  	Since $\{b_n\}_{n\geq 1}$ is non-decreasing, it is easy to see that $f$ is non-decreasing and continuous.
  	Then $b_n\geq a_n$ and $\sum_{n=1}^{\infty} 1/a_n<\infty$ yield that \eqref{eq:f1} holds. 
  	
  	Let us define $e_n=b_{n+1}/b_{n}$ for all $n\in \N^+$ and let $E=\limsup_{n\to \infty} e_n$. 
  	Clearly $e_n\geq 1$ for all $n$, so it is enough to show for \eqref{eq:f1.5} that $E\leq 1$. 
  	By $b_{n+1}\geq a_{n+1}$ and \eqref{glim} we obtain that
  	\begin{equation} \label{en} \limsup_{n\to \infty} \frac{a_{n+2}}{b_{n+1}}\leq \limsup_{n\to \infty} \frac{a_{n+1}}{b_{n+1}}\lim_{n\to \infty} \frac{a_{n+2}}{a_{n+1}}\leq 1.
  	\end{equation}
  	For all $n\in \N^+$ we have 
  	\begin{equation} \label{recn} \frac{ (3/2)b_{n+1}-(1/2)b_n}{b_{n+1}}=\frac 32- \frac {1}{2e_n}\geq 1.
  	\end{equation} 
  	Then \eqref{bdef}, \eqref{en}, and \eqref{recn} yield that 
  	\begin{equation} \label{eq:E} E=\limsup_{n\to \infty} e_{n+1}\leq \limsup_{n\to \infty} \left(\frac 32-\frac{1}{2e_n}\right)=\frac 32- \frac{1}{2\limsup_{n} e_n}=\frac 32- \frac{1}{2E}.
  	\end{equation} 
  	Solving the above inequality implies that $E\leq 1$, so \eqref{eq:f1.5} is satisfied. 
  	
  	Monotonicity, \eqref{eq:bn}, and \eqref{glim} imply
  	that for all $n\in \N^+$ and $2^n\leq z \leq 2^{n+1}$ we have
  	\[\frac{f(z)}{g(z)}\leq \frac{f(2^{n+1})}{g(2^n)}=\frac{b_{n+1}}{a_n }\leq \frac{2a_{n+1}}{a_n} \to 2\]
  	as $n\to \infty$, so \eqref{eq:f2} holds.
  	
  	Finally, let us define $h\colon [0,\infty)\to [0,\infty)$ as $h(x)=xf(x)$. Then $h(x)=b_1 x$ if $0\leq x\leq 2$, and for all $n\in \N^+$ and $0\leq x\leq 2^n$ we have
  	\[h(x+2^n)=(x+2^n)f(x+2^n)=b_n 2^n+(2b_{n+1}-b_{n})x.\]
  	Clearly $h$ is continuous and increasing. We obtain that 
  	$h$ is affine on $[0,2]$ with slope $d_0:=b_1$, and for each $n\in \N^+$ it is also affine on $[2^n,2^{n+1}]$ with slope $d_{n}:=2b_{n+1}-b_{n}$, so $h$ is 
  	piecewise linear. In order to prove that $h$ is convex, we need to prove that the sequence $\{d_{n}\}_{n\geq 0}$ is non-decreasing. Clearly $d_1\geq d_0$ and by 
  	\eqref{eq:2bn} for all $n\in \N^+$ we have
  	\[d_{n+1}-d_{n}=2b_{n+2}-3b_{n+1}+b_{n}\geq 0.\]
  	Thus $\{d_{n}\}_{n\geq 0}$ is non-decreasing, so \eqref{eq:f3} holds. The proof is complete.
  \end{proof}
    
  \begin{lemma} \label{l:sqrt} Let $g\colon [0,\infty)\to \R^+$ be a non-decreasing function such that
  	\[\sum_{n=1}^{\infty} \frac{1}{g(2^n)}<\infty.\]
  	Let $q\geq 1$. Then there is a non-decreasing function $f\colon [0,\infty)\to \R^+$ such that
  	\begin{enumerate}[(i)]
  		\item \label{eq:sq1} $\sum_{n=1}^{\infty} 1/f(2^n)<\infty$,
  		\item \label{eq:sq2} $\lim_{n\to \infty} f(2^{n+1})/f(2^n)=1$, 
  		\item \label{eq:sq3} $\limsup_{x\to \infty} f(x)/g(x)<\infty$,
  		\item \label{eq:sq4} there is an increasing convex function $f_q\colon [0,\infty)\to \R^+$ and $N_q>0$ such that $f_q$ is affine on $[0,N_q]$ and 
  		$f_q(x)=x^q f(\sqrt{x})$ for $x\geq N_q$. 
  	\end{enumerate}
  \end{lemma}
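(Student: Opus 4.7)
The plan is to adapt the construction of Lemma~\ref{l:c}, choosing the recursion and the interpolation so that convexity of $f_q(x) = x^q f(\sqrt{x})$ reduces to a three-term linear inequality on the values $f(2^n)$. By Corollary~\ref{c:f} I may assume $g(2^{n+1})/g(2^n) \to 1$. Setting $a_n = g(2^n)$, define $\{b_n\}_{n \geq 1}$ by $b_1 = a_1$, $b_2 = a_2$, and for $n \geq 1$,
\[ b_{n+2} = \max\Bigl\{a_{n+2},\ \tfrac{4^q+1}{4^q}\,b_{n+1} - \tfrac{1}{4^q}\,b_n \Bigr\}. \]
Since $\tfrac{4^q+1}{4^q}\,b_{n+1} - \tfrac{1}{4^q}\,b_n = b_{n+1} + \tfrac{1}{4^q}(b_{n+1} - b_n) \geq b_{n+1}$ whenever $b_{n+1} \geq b_n$, an easy induction shows that $b_n$ is non-decreasing. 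The associated linear recursion $4^q c_{n+2} = (4^q+1) c_{n+1} - c_n$ has characteristic polynomial $4^q(x-1)(x - 4^{-q})$ with roots $1$ and $4^{-q}$. Following the scheme of Lemma~\ref{l:c}---fix $m \geq 3$, let $k$ be the largest index with $2 \leq k \leq m$ and $b_k = a_k$, and solve the recursion with initial values $c_0 = b_{k-1}$, $c_1 = b_k$ to write $c_n = A + B\cdot 4^{-qn}$ with $B \leq 0$---one obtains $b_m = c_{m-k+1} \leq A = \tfrac{4^q b_k - b_{k-1}}{4^q-1} \leq \tfrac{4^q}{4^q-1}\,a_m$. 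Analogously to part~(ii) of Lemma~\ref{l:c}, $E := \limsup b_{n+1}/b_n$ satisfies $E^2 - \tfrac{4^q+1}{4^q}E + \tfrac{1}{4^q} \leq 0$, whose roots are $1$ and $4^{-q}$, forcing $E \leq 1$; combined with $b_n$ non-decreasing, $b_{n+1}/b_n \to 1$.

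Next, put $f(y) = b_1$ on $[0,2]$, and for $n \geq 1$, $y \in [2^n, 2^{n+1}]$, define
\[ f(y) = b_n + c_n\bigl(1 - (2^n/y)^{2q}\bigr), \qquad c_n = \tfrac{4^q}{4^q-1}(b_{n+1} - b_n). \]
Then $f$ is continuous with $f(2^n) = b_n$, and $f'(y) = 2q\,c_n\,(2^n)^{2q}/y^{2q+1} \geq 0$, so $f$ is non-decreasing. Properties (i)--(iii) follow from the corresponding properties of $b_n$. For (iv), observe that on $[2^n, 2^{n+1}]$,
\[ G(y) := y^{2q} f(y) = (b_n + c_n)\,y^{2q} - c_n \cdot 4^{nq}, \qquad \tfrac{G'(y)}{y} = 2q(b_n + c_n)\,y^{2q - 2}. \]
Since $q \geq 1$, $y^{2q-2}$ is non-decreasing, so $G'(y)/y$ is non-decreasing within each piece, and its jump at $y = 2^{n+1}$ equals
\[
2q(2^{n+1})^{2q-2}\bigl[(b_{n+1}+c_{n+1})-(b_n+c_n)\bigr] = 2q(2^{n+1})^{2q-2}\cdot\tfrac{4^q b_{n+2} - (4^q+1)b_{n+1} + b_n}{4^q - 1} \geq 0
\]
by the Step~1 recursion. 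Thus $f_q(x) := x^q f(\sqrt{x}) = G(\sqrt{x})$ satisfies $f_q'(x) = G'(\sqrt{x})/(2\sqrt{x})$, so $f_q$ is convex on $[0, \infty)$; as $f_q(0) = 0$, I replace it on $[0, 4]$ by the affine function of slope $\gamma := 4^{q-1} b_1/2$ (which is $\leq f_q'(4^+) = q(b_1 + c_1)\,4^{q-1}$) and intercept $4^q b_1/2 > 0$. Then $N_q := 4$ and this modified $f_q$ satisfy (iv).

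The main obstacle is matching the ansatz $f(y) = b_n + c_n(1 - (2^n/y)^{2q})$ to the desired convexity of $f_q$: this specific form renders $G(y)$ a sum of a $y^{2q}$ term and a constant, so the slope of $f_q$ on $[4^n, 4^{n+1}]$ is the single monomial $q(b_n + c_n) x^{q-1}$ and the jump condition at a breakpoint becomes the explicit sequence inequality of Step~1, whose Lemma~\ref{l:c}-style max-envelope stays within factor $\tfrac{4^q}{4^q-1}$ of $a_n$.
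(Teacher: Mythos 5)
Your proof is correct, but it takes a genuinely different route from the paper. The paper dispatches this lemma by a change of variables: it defines $g^{*}(x)=g(\sqrt{x})$, notes that $g^{*}(x)\leq g(x)$ for $x\geq 1$ so $\sum_{n}1/g^{*}(2^n)<\infty$ persists, applies Corollary~\ref{c:1}~\eqref{eq:gq} (and Lemma~\ref{l:c} for $q=1$) to $g^{*}$ to obtain $f^{*}$ with $x^q f^{*}(x)$ convexifiable for large $x$, and then sets $f(x)=f^{*}(x^2)$, so that $x^q f(\sqrt{x})=x^q f^{*}(x)$ inherits exactly the required shape while properties \eqref{eq:sq1}--\eqref{eq:sq3} transfer trivially under the substitution. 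You instead rebuild the function from scratch in the style of Lemma~\ref{l:c}: you replace that lemma's recursion coefficients $3/2,\,-1/2$ (roots $1,\,1/2$) by $\tfrac{4^q+1}{4^q},\,-\tfrac{1}{4^q}$ (roots $1,\,4^{-q}$), and you use the interpolation $f(y)=b_n+c_n\bigl(1-(2^n/y)^{2q}\bigr)$ so that $G(y)=y^{2q}f(y)$ becomes a single monomial plus a constant on each dyadic block; convexity of $f_q=G\circ\sqrt{\phantom{x}}$ then reduces to $G'(y)/y=2q(b_n+c_n)y^{2q-2}$ being non-decreasing, which is automatic within pieces because $q\geq 1$ and at breakpoints because $4^q b_{n+2}-(4^q+1)b_{n+1}+b_n\geq 0$ is exactly the defining recursion. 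The supporting calculations check out: $b_n\geq a_n$ gives \eqref{eq:sq1}, the quadratic in $E$ with roots $\{1,4^{-q}\}$ forces $E=1$ for \eqref{eq:sq2}, the bound $b_m\leq\tfrac{4^q}{4^q-1}a_m$ gives \eqref{eq:sq3}, and the affine patch on $[0,4]$ with slope $4^{q-1}b_1/2\leq q(b_1+c_1)4^{q-1}$ and positive intercept makes the modified $f_q$ an increasing, convex, $\R^+$-valued function agreeing with $x^q f(\sqrt{x})$ for $x\geq N_q=4$. The trade-off: the paper's reduction is shorter and reuses Corollary~\ref{c:1} and Lemma~\ref{l:c} verbatim, while your direct construction (modulo Corollary~\ref{c:f} for normalizing $g$) is self-contained, produces explicit constants, and for $q=1$ yields a piecewise-affine $f_q$ which is a pleasant structural fact the paper's argument does not surface.
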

  
  \begin{proof} Define $g^{*}\colon [0,\infty)\to \R^+$ as $g^{*}(x)=g(\sqrt{x})$. Clearly $g^*$ 
  	is a non-decreasing function such that $g^{*}(x)\leq g(x)$ for $x\geq 1$. Fact~\ref{f:1} yields that $\sum_{n=1}^{\infty} 1/g^{*}(2^n)<\infty$. 
  	Corollary~\ref{c:1} and Lemma~\ref{l:c} imply that there is a non-decreasing function $f^{*}\colon [0,\infty)\to \R^+$ such that 
  	\begin{enumerate}[(1)]
  		\item \label{eq:s1} $\sum_{n=1}^{\infty} 1/f^{*}(2^n)<\infty$, 
  		\item \label{eq:s2} $\lim_{n\to \infty} f^{*}(2^{n+1})/f^{*}(2^n)=1$,  	
  		\item \label{eq:s3} $\limsup_{x\to \infty} f^{*}(x)/g^{*}(x)<\infty$, 
  		\item \label{eq:s4} there is an increasing convex function $f^{*}_q \colon [0,\infty)\to \R^+$ and $N_q>0$ such that 
  		$f^*_{q}$ is affine on $[0,N_q]$ and $f^{*}_q(x)=x^q f^{*}(x)$ for all $x\geq N_q$.  
  	\end{enumerate}
  	Define $f\colon [0,\infty)\to \R^+$ as $f(x)=f^{*}(x^2)$. By \eqref{eq:s1} we have
  	\[\sum_{n=1}^{\infty} \frac{1}{f(2^n)}=\sum_{n=1}^{\infty} \frac{1}{f^*(4^n)}< \sum_{n=1}^{\infty} \frac{1}{f^*(2^n)}<\infty,\]
  	so \eqref{eq:sq1} holds. By \eqref{eq:s2} we have 
  	\[\lim_{n\to \infty}  \frac{f(2^{n+1})}{f(2^n)}=\lim_{n\to \infty} \frac{f^*(4^{n+1})}{f^{*}(4^{n})}=
  	\lim_{n\to \infty} \frac{f^*(2^{n+2})}{f^*(2^{n+1})} \cdot \lim_{n\to \infty} \frac{f^*(2^{n+1})}{f^*(2^n)}=1,\]
  	thus \eqref{eq:sq2} is satisfied. The definitions and \eqref{eq:s3} yield that 
  	\[\limsup_{x\to \infty} \frac{f(x)}{g(x)}=\limsup_{x\to \infty} \frac{f^*(x^2)}{g^*(x^2)}=\limsup_{x\to \infty} \frac{f^{*}(x)}{g^{*}(x)}<\infty,\]
  	so \eqref{eq:sq3} holds. Finally, define $f_q\colon [0,\infty)\to \R^+$ as $f_q(x)=f^{*}_q(x)$, then 
  	\eqref{eq:s4} yields that $f_q$ is an increasing convex function which is affine on $[0,N_q]$ and for all $x\geq N_q$ we have 
  	\[f_q(x)=f^{*}_q (x)=x^q f^*(x)=x^q f(\sqrt{x}),\] 
  	hence \eqref{eq:sq4} holds. The proof is complete.
  \end{proof}

\section{Sequences of arbitrary random variables} \label{s:arb}
The main goal of this section is to prove Theorems~\ref{t:arb1} and~\ref{t:arb2}. 
  
\begin{theorem} \label{t:arb1} Let $f\colon [0,\infty)\to \R^+$ be a non-decreasing function such that
	\[\sum_{n=1}^{\infty} \frac{1}{f(2^n)}<\infty.\]
	Let $0<r<1$ and let $r\leq p$, and define $q=q(r,p)=\max\{p, (p-r)/(1-r)\}$. 
	Assume that $\{X_n\}_{n\geq 1}$ is a sequence of random variables with $\sup_{n\geq 1} \E(|X_n|^q f(|X_n|))<\infty$. Then for all $\eps>0$ we have
	\[\sum_{n=1}^{\infty} n^{p/r-2} \P(M_n>\eps n^{1/r})<\infty.\]
\end{theorem}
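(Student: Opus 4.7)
The plan is to combine the trivial bound $M_n \leq \sum_{j=1}^n |X_j|$, valid for any sequence of random variables, with a Markov estimate based on an auxiliary function extracted from $f$. Before beginning, I would invoke Corollary~\ref{c:f} (and, for the borderline case $p = 1$, Lemma~\ref{l:c}) together with Fact~\ref{f:2} to replace $f$ by a regularized version that is slowly varying and, in the regime $p \geq 1$, has $xf(x)$ convex; the summability of $\sum 1/f(2^n)$ and the moment bound on the $X_n$ are preserved. The proof then divides naturally according to whether $p < 1$ (where $q = p$) or $p \geq 1$ (where $q = (p-r)/(1-r)$).

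For $r \leq p < 1$, I would apply Corollary~\ref{c:1}\eqref{eq:gp} to obtain a concave, increasing $g_p \colon [0,\infty) \to \R^+$ with $g_p(x) = x^p f(x)$ for $x \geq N_p$. Since $g_p$ is concave and $g_p(0) > 0$, it is subadditive; hence $g_p(M_n) \leq g_p(\sum_j |X_j|) \leq \sum_j g_p(|X_j|)$. The moment hypothesis gives $\sup_j \E g_p(|X_j|) < \infty$, so $\E g_p(M_n) \lesssim n$. Markov's inequality therefore yields
\[
\P(M_n > \eps n^{1/r}) \lesssim \frac{n}{g_p(\eps n^{1/r})} \asymp \frac{n^{1-p/r}}{f(\eps n^{1/r})},
\]
and multiplying by $n^{p/r - 2}$ produces the series $\sum_n 1/(n f(\eps n^{1/r}))$, which converges by Fact~\ref{f:1}.

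For $p \geq 1$, I would select a convex, increasing $\Phi \colon [0,\infty) \to \R^+$ with $\Phi(x) = x^q f(x)$ for $x \geq N$ (via Corollary~\ref{c:1}\eqref{eq:gq} when $p > 1$, and via Lemma~\ref{l:c} which makes $\Phi(x) = xf(x)$ convex when $p = 1$). Convex Jensen gives $\Phi\bigl(\tfrac{1}{n}\sum_j |X_j|\bigr) \leq \tfrac{1}{n}\sum_j \Phi(|X_j|)$; taking expectation and using the moment bound, the left-hand side is $O(1)$. Writing $\eps n^{1/r} = n \cdot \eps n^{(1-r)/r}$ and using $M_n \leq \sum_j |X_j|$, Markov gives
\[
\P(M_n > \eps n^{1/r}) \lesssim \frac{1}{\Phi(\eps n^{(1-r)/r})} \asymp \frac{1}{n^{(p-r)/r} f(n^{(1-r)/r})},
\]
via the defining identity $q(1-r) = p - r$. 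Multiplying by $n^{p/r - 2}$ yields $\sum_n 1/(n f(n^{(1-r)/r}))$, which converges by Fact~\ref{f:1} because $(1-r)/r > 0$. The main technical obstacle is to arrange for the precise convexity or concavity of the auxiliary function with the prescribed asymptotic $x^p f(x)$ or $x^q f(x)$; this is exactly what Section~\ref{s:tl} provides. Once these auxiliary functions are in hand, the entire computation is essentially forced by the identity $q(1-r) = p - r$, which makes the weight $n^{p/r - 2}$ combine with the Markov tail to produce precisely the critical $\sum 1/(n f(\cdot))$ series.
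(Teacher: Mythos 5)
Your proposal is correct and follows essentially the same route as the paper's own proof: reduce to the nonnegative case via $M_n \leq \sum_j |X_j|$, then in the regime $p<1$ use the concave subadditive regularization of $x^p f(x)$ (Corollary~\ref{c:1}\eqref{eq:gp}) with Markov, and in the regime $p\geq 1$ use the convex regularization of $x^q f(x)$ (Corollary~\ref{c:1}\eqref{eq:gq}, or Lemma~\ref{l:c} for $q=1$) with Jensen applied to $\frac{1}{n}\sum_j |X_j|$, closing each case with Fact~\ref{f:1}. The only cosmetic difference is that you observe $g_p(0)>0$ (which is what the construction actually gives and still yields subadditivity), whereas the paper misstates $g_p(0)=0$; the substance is identical.
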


\begin{proof} Let $\eps>0$ be arbitrarily fixed. We may assume that $X_i\geq 0$ for all $i$, otherwise we can replace $X_i$ by $|X_i|$. Thus $M_n=S_n$ for all $n\in \N^+$. First suppose that $p<1$, then $q=p$. 
We may assume by Corollary~\ref{c:1}~\eqref{eq:gp} and Fact~\ref{f:2} that there exists an $N_p>0$ and a concave increasing function $g_p\colon [0,\infty)\to \R^+$ such that $g_p(x)=x^p f(x)$ for all $x\geq N_p$. By Fact~\ref{f:2} we have
\begin{equation} \label{eq:pp}
\sup_{n\geq 1} \E g_p(|X_n|)=C<\infty.
\end{equation}
As $g_p$ is concave with $g_p(0)=0$, it is subadditive.
Fix an integer $n_0\geq (N_p/\eps)^r$. Markov's inequality, the fact that $g_p$ is increasing and subadditive, and \eqref{eq:pp} imply that for each
$\eps>0$ and $n\geq n_{0}$ we have
	\begin{align*} 
	\P(M_n>\eps n^{1/r})&=\P(|S_n|>\eps n^{1/r}) \\
	&= \P(g_p(|S_n|)>g_p(\eps n^{1/r})) \\
	&\leq \frac{\E g_p(|S_n|)}{g_p(\eps n^{1/r})} \\
	&= \frac{\E g_p(|S_n|)}{(\eps n^{1/r})^p f(\eps n^{1/r})} \\
	&\leq  \frac{\sum_{i=1}^{n} \E g_p(|X_i|)}{\eps^p n^{p/r} f(\eps n^{1/r})}  \\
	&\leq \frac{Cn}{\eps^p n^{p/r} f(\eps n^{1/r})} \\
	&\lesssim \frac{n^{1-p/r}}{f(\eps n^{1/r})}.
	\end{align*}
	Therefore by Fact~\ref{f:1} we have
	\[\sum_{n\geq n_{0}} n^{p/r-2} \P(M_n>\eps n^{1/r})\lesssim \sum_{n\geq n_{0}} \frac{1}{n f(\eps n^{1/r})}<\infty,\]
	which completes the proof for $p<1$. 
	
	Now assume that $p\geq 1$, then $q=(p-r)/(1-r)\geq 1$. We may assume by Corollary~\ref{c:1}~\eqref{eq:gq}, Lemma~\ref{l:c}, and Fact~\ref{f:2} that there is an
	$N_q>0$ and an increasing convex function $g_q\colon [0,\infty)\to \R^+$ such that $g_q(x)=x^q f(x)$ for all $x\geq N_q$. By Fact~\ref{f:2} we have
	\begin{equation} \label{eq:ssup} \sup_{n\geq 1} \E g_q(|X_n|)=C<\infty.
	\end{equation}
	Fix an integer $n_1\geq (N_q/\eps)^{r/(1-r)}$. Markov's inequality, the fact that $g_q$ is increasing and Jensen's inequality holds for the convex $g_q$, and \eqref{eq:ssup} imply that
	for each $\eps>0$ and $n\geq n_1 $ we have
	\begin{align*} \P(M_n>\eps n^{1/r})&=\P(|S_n|>\eps n^{1/r}) \\
	&\leq \P(g_q(|S_n|/n)>g_q(\eps n^{1/r-1})) \\
	&\leq \frac{\E g_q(|S_n|/n)}{g_q(\eps n^{1/r-1})} \\
	&=\frac{\E g_q(|S_n|/n)}{\eps^q n^{q(1/r-1)} f(\eps n^{1/r-1})} \\
	&\leq \frac{(1/n)\sum_{i=1}^{n} \E g_q(|X_i|)}{\eps^q n^{p/r-1} f(\eps n^{1/r-1})} \\
	&\leq \frac{C}{\eps^q n^{p/r-1} f(\eps n^{1/r-1})} \\
	&\lesssim \frac{n^{1-p/r}}{f(\eps n^{1/r-1})}.
	\end{align*} 
	Thus the above inequality and Fact~\ref{f:1} yields that
	\[\sum_{n\geq n_{1}} n^{p/r-2} \P(M_n>\eps n^{1/r})\lesssim \sum_{n\geq n_{1}} \frac{1}{n f(\eps n^{1/r-1})}<\infty.\]
	This concludes the proof.
\end{proof}

\begin{theorem} \label{t:arb2} Let $f\colon [0,\infty)\to \R^+$ be a non-decreasing function such that
\[\sum_{n=1}^{\infty} \frac{1}{f(2^n)}=\infty.\]
Let $0<r<1\leq p$ and let $q=q(r,p)=(p-r)/(1-r)$. Then there is a sequence of random variables $\{X_n\}_{n\geq 1}$ such that $\sup_{n\geq 1} \E(|X_n|^q f(|X_n|))<\infty$ and
\[\sum_{n=1}^{\infty} n^{p/r-2} \P(|S_n|>n^{1/r})=\infty.\]
\end{theorem}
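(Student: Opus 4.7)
My plan is to build $\{X_n\}_{n\ge 1}$ as a \emph{block-constant} sequence. Partition $\N^+$ into the dyadic blocks $B_k = [2^k, 2^{k+1})$ for $k\ge 1$, and set $X_n = W_k$ for $n \in B_k$, where the $\{W_k\}_{k\ge 1}$ are independent random variables with $\P(W_k = A_k) = \pi_k$ and $\P(W_k = 0) = 1 - \pi_k$. The scaling dictated by the problem is $A_k = C \cdot 2^{k(1-r)/r}$ with $C > 2^{1+1/r}$, chosen so that a single block-wide activation already pushes $S_n$ past $n^{1/r}$; I then set $\pi_k = c/(A_k^q f(A_k))$ with $c$ small enough that $\pi_k \le 1$ for every $k$, which is possible since $A_k\ge A_1 > 0$ and $f\ge f(0) > 0$.

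The moment side is immediate: for each $n\in B_k$ we have $\E(|X_n|^q f(|X_n|)) = \pi_k A_k^q f(A_k) = c$, so $\sup_{n\ge 1}\E(|X_n|^q f(|X_n|)) \le c < \infty$. For the lower bound on the tail sum, since $X_n\ge 0$ we have $|S_n| = S_n = \sum_{j<k} 2^j W_j + (n-2^k+1)W_k$ whenever $n\in B_k$. On the event $\{W_k=A_k\}$ and for $n$ in the upper half of $B_k$, i.e.\ $n \ge 3\cdot 2^{k-1}$, one has $S_n \ge 2^{k-1}A_k = (C/2)\,2^{k/r}$, which exceeds $2^{(k+1)/r} = 2^{1/r}\cdot 2^{k/r} \ge n^{1/r}$ by our choice of $C$. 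Hence $\P(|S_n|>n^{1/r}) \ge \pi_k$ for all such $n$ in $B_k$.

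The exponent arithmetic driving the critical threshold $q=(p-r)/(1-r)$ is the identity $q(1-r)/r = p/r - 1$, equivalently $A_k^q \asymp 2^{k(p/r-1)}$. Summing $n^{p/r-2}$ over the top half of $B_k$ gives a quantity $\asymp 2^{k(p/r-1)}$, and after multiplying by $\pi_k$ this cancels the $A_k^q$ factor in the denominator of $\pi_k$, leaving a per-block contribution of order $1/f(A_k) = 1/f(C\cdot 2^{k(1-r)/r})$. Invoking Fact~\ref{f:1} with parameters $\eps=C$ and $c=(1-r)/r > 0$, the hypothesis $\sum_n 1/f(2^n) = \infty$ yields $\sum_k 1/f(C\cdot 2^{k(1-r)/r}) = \infty$, as required. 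The only potential pitfall is constant-bookkeeping -- ensuring $C$ is large enough for the threshold comparison and $c$ small enough for $\pi_k\le 1$ -- but no real obstacle arises, since the exponent identity $q(1-r)/r = p/r-1$ is precisely what makes the two powers of $2^k$ cancel, so the critical exponent $q=(p-r)/(1-r)$ emerges naturally from the construction.
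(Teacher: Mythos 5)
Your proof is correct and follows essentially the same construction as the paper. Both arguments use a block-constant nonnegative sequence whose jump height grows like $2^{k(1/r-1)}$ and whose activation probability is $\asymp 1/(A_k^q f(A_k))$ so that the uniform moment $\E(|X_n|^q f(|X_n|))$ is constant, then restrict to the upper half of each block and invoke Fact~\ref{f:1} to conclude divergence; the only superficial differences are that you use base-$2$ blocks with a tunable scale $C$ and shrink $\pi_k$ by a constant while the paper uses base-$4$ blocks and simply zeros out the initial terms, and the independence you impose on the $W_k$'s is unnecessary (as in the paper, only the marginal of the current block matters because $X_n\ge 0$).
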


\begin{proof} For all $k\in \N^+$ let
	\[p_k=\frac{4^{k(1-p/r)}}{f(4^{1+k(1/r-1)})}.\]
	Fix $k_0\in \N^{+}$ such that
	for all $k\geq k_0$ we have $p_k<1$.
	Let $X_n\equiv 0$ for all $n<4^{k_0}$. Let $k>k_0$ be fixed, then for all $m,n\in \{4^{k-1},\dots, 4^k-1\}$ let $X_n=X_m$ and let
	\[\P(X_n=4^{1+k(1/r-1)})=p_k \quad \textrm{and} \quad \P(X_n=0)=1-p_k.\]
	Then $\sup_{n\geq 1} \E(|X_n|^q f(|X_n|))=4^{q}<\infty$, and for all
	$2\cdot 4^{k-1}\leq n <4^k$ we have
	\begin{equation*} \label{eq:Sn} \P(|S_n|>n^{1/r})\geq \P(4^{k-1} X_n\geq 4^{k/r})=\P(X_n\geq 4^{1+k(1/r-1)})=p_k.
	\end{equation*}
	The above inequality and Fact~\ref{f:1} imply that
	\begin{align*} \sum_{n=1}^{\infty} n^{p/r-2} \P(|S_n|> n^{1/r}) &\geq \sum_{k>k_0} \sum_{2\cdot 4^{k-1}\leq n< 4^k} n^{p/r-2} \P(|S_n|>n^{1/r}) \\
	&\geq \sum_{k>k_0} (2\cdot 4^{k-1}) (4^{(k-1)(p/r)} 4^{-2k}) p_k \\
	&=2^{-2p/r-1} \sum_{k>k_0}  \frac{1}{f(4^{1+k(1/r-1)})}=\infty.
	\end{align*}
	The proof is complete.
\end{proof}

\section{Martingale difference sequences} \label{s:MDS}

\subsection{The cases $1<p<2$ and $r=p=1$}
 
The main goal of this subsection is to prove Theorem~\ref{t:MDS}.

 \begin{theorem} \label{t:MDS} Let $f\colon [0,\infty)\to \R^+$ be a non-decreasing function such that
 	\[\sum_{n=1}^{\infty} \frac{1}{f(2^n)}<\infty.\]
 	Let $1<p<2$ and $0<r\leq p$ or let $r=p=1$. Let $\{X_n\}_{n\geq 1}$ be a martingale difference sequence such that $\sup_{n\geq 1} \E(|X_n|^p f(|X_n|))<\infty$. Then for all $\eps>0$ we have \[\sum_{n=1}^{\infty} n^{p/r-2} \P(M_n>\eps n^{1/r})<\infty.\]
 \end{theorem}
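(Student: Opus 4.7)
The plan is to follow the classical Baum--Katz truncation strategy, adapted to a martingale difference sequence, with the crucial refinement that the slowly varying factor $f$ must be fully exploited in the moment estimates. I begin by using Corollary \ref{c:1} together with Fact \ref{f:2} to replace $f$ by a non-decreasing function enjoying the additional property that $x \mapsto x^{2-p}/f(x)$ is non-decreasing for sufficiently large $x$; the moment hypothesis $\sup_n \E(|X_n|^p f(|X_n|)) < \infty$ is preserved under this replacement.

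Fix $\eps > 0$ and $n \in \N^+$, and set $T = n^{1/r}$. Truncate by writing $X_k = X_k' + X_k''$ with $X_k' = X_k I(|X_k| \leq T)$, and set $Y_k = X_k' - \E(X_k' \mid \iF_{k-1})$. Then $\{Y_k\}_{k\geq 1}$ is a martingale difference sequence with $|Y_k| \leq 2T$, and, since $\E(X_k \mid \iF_{k-1}) = 0$, the decomposition
\[ M_n \leq \max_{j \leq n} \Bigl| \sum_{k=1}^j Y_k \Bigr| + \sum_{k=1}^n |X_k''| + \sum_{k=1}^n \E(|X_k''| \mid \iF_{k-1}) \]
holds pointwise. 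It thus suffices to show that each of the three tail probabilities at level $\eps T/3$, when summed with weight $n^{p/r-2}$, is finite.

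For the martingale piece $T_j := \sum_{k \leq j} Y_k$, Doob's $L^2$ inequality and martingale orthogonality yield
\[ \P\bigl( \max_{j \leq n} |T_j| > \eps T/3 \bigr) \lesssim T^{-2} \sum_{k=1}^n \E(Y_k^2) \leq T^{-2} \sum_{k=1}^n \E\bigl(X_k^2 I(|X_k| \leq T)\bigr). \]
The crucial bound is $\E(X_k^2 I(|X_k| \leq T)) \lesssim T^{2-p}/f(T)$, which follows from the identity $X_k^2 = \bigl(|X_k|^{2-p}/f(|X_k|)\bigr) \cdot |X_k|^p f(|X_k|)$ combined with monotonicity of $x^{2-p}/f(x)$ (a constant boundary contribution from $\{|X_k| < R\}$ being negligible). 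Multiplying by $n$, dividing by $T^2$, and summing against $n^{p/r-2}$ leaves $\sum 1/(n f(n^{1/r})) < \infty$ by Fact \ref{f:1}.

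For the two remaining sums, Markov's inequality at first-moment level is applied in combination with the analogous estimate
\[ \E\bigl(|X_k| I(|X_k| > T)\bigr) \lesssim 1/(T^{p-1} f(T)), \]
obtained from the identity $|X_k| = |X_k|^p f(|X_k|)/\bigl(|X_k|^{p-1} f(|X_k|)\bigr)$ and monotonicity of $x^{p-1} f(x)$, with the convention $T^0 = 1$ in the case $r = p = 1$. Both $\E(\sum_k |X_k''|)$ and $\E(\sum_k \E(|X_k''| \mid \iF_{k-1}))$ are thereby bounded by $Cn/(T^{p-1} f(T))$, and Markov yields a probability bound of order $n^{1-p/r}/f(n^{1/r})$, producing the same convergent series. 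The main difficulty is obtaining the sharp estimate for $\E(X_k^2 I(|X_k| \leq T))$ with the factor $1/f(T)$: the naive bound $\lesssim T^{2-p}$ would give $\sum 1/n$, which diverges. Extracting the extra $1/f(T)$ factor is precisely what forces the use of the delicate regularity of $f$ developed in Section \ref{s:tl}.
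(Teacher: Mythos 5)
Your proof is correct, and it follows the same high-level truncation-at-$n^{1/r}$ strategy as the paper's Theorem~\ref{t:MDS}, but two of the mechanical steps are genuinely different and somewhat simpler. For the second-moment piece, the paper sums over $n$ first and then interchanges with the sum over $k$ and a distribution-function integral (a Fubini/Abel-summation argument leading to the three-way split $A_k+B_k+C_k$), whereas you instead prove the uniform-in-$k$ bound $\E(X_k^2 I(|X_k|\leq T))\lesssim T^{2-p}/f(T)$ directly from Corollary~\ref{c:1}~\eqref{eq:hc} and substitute it before summing; this avoids the exchange entirely and, since the moment hypothesis is already uniform in $k$, yields the same convergent series $\sum_n 1/(nf(n^{1/r}))$. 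For the tail, the paper keeps $Z_{k,n}=X_kI(|X_k|>T)-\E(X_kI(|X_k|>T)\mid\iF_{k-1})$ as a compensated martingale and applies Doob's weak-type $(1,1)$ inequality, then triangle and conditional Jensen; you instead absorb $\E(X_k'\mid\iF_{k-1})=-\E(X_k''\mid\iF_{k-1})$ into the decomposition, giving two monotone sums $\sum_k|X_k''|$ and $\sum_k\E(|X_k''|\mid\iF_{k-1})$ on which plain Markov at first-moment level (plus the law of total expectation) applies directly, with no maximal inequality needed for these terms. Both routes use exactly the same input — the monotonicity of $x^{2-p}/f(x)$ and of $x^{p-1}f(x)$ supplied by Section~\ref{s:tl} — and produce the same $n^{1-p/r}/f(n^{1/r})$ tail estimate, so the content is equivalent; your version trades the Fubini computation and the $L^1$-Doob step for more elementary pointwise bounds. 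One small bookkeeping remark worth keeping explicit: the boundary contribution $\E(X_k^2 I(|X_k|<R))\leq R^2$ produces, after weighting, the separately summable series $R^2\sum_n n^{(p-2)/r-1}$ (using $p<2$), so it need not be absorbed into $T^{2-p}/f(T)$ at all.
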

 
 \begin{proof} Let $\eps>0$ be arbitrarily fixed and assume $\sup_{n\geq 1} \E(|X_n|^p f(|X_n|))=C<\infty$.
 	For all $k\in \N^+$ let 
 	\[F_k(t)=\P(|X_k|\leq t)\] 
 	be the cumulative distribution function of $|X_k|$.
 	Define $q=(2-p)/r>0$ and $c=2-p>0$. By Corollary~\ref{c:1}~\eqref{eq:hc} and Fact~\ref{f:2} we may assume that there exists an $R_c>0$ such that the function
 	$x\mapsto x^{-c} f(x)$ is decreasing for $x\geq R_c$. For all $n\in \N^+$ and for all $k\in \{1,\dots, n\}$ define
 	\begin{align*}
 	Y_{k,n}&=X_k I(|X_k|\leq n^{1/r})-\E(X_k I(|X_k|\leq n^{1/r}) \, |\, \iF_{k-1}), \\
 	Z_{k,n}&=X_k I(|X_k|> n^{1/r})-\E(X_k I(|X_k|> n^{1/r}) \, |\, \iF_{k-1}).
 	\end{align*}
 	For all $n\in \N^+$ define
 	\begin{align*} 
 	S_{k,n}^{*}&=\sum_{i=1}^{k} Y_{i,n} \quad \textrm{and} \quad
 	M_{n}^{*}=\max_{1\leq k\leq n} |S_{k,n}^{*}|, \\
 	S_{k,n}^{**}&=\sum_{i=1}^{k} Z_{i,n} \quad \textrm{and} \quad M_n^{**}=\max_{1\leq k\leq n} |S_{k,n}^{**}|.
 	\end{align*}  
 	Clearly for all $n\in \N^+$ and $1\leq k\leq n$ we have
 	\begin{equation} \label{e:YZ} 
 	\E(Y_{k,n} \, | \, \iF_{k-1})=\E(Z_{k,n}\, | \, \iF_{k-1})=0,
 	\end{equation}
 	so $\{S_{k,n}^{*}\}_{1\leq k\leq n}$ and $\{S_{k,n}^{**}\}_{1\leq k\leq n}$ are martingales. For all $n\in \N^+$ and $1\leq k\leq n$ we have 
 	\begin{equation*} 
 	S_k=S_{k,n}^{*}+S_{k,n}^{**}, 
 	\end{equation*}
 	so for all $n\in \N^+$ we have 
 	\begin{equation} \label{e:M} 
 	M_n\leq M_n^{*}+M_n^{**}.
 	\end{equation} 
 	By \eqref{e:YZ} we have $\E(Y_{k,n}Y_{\ell,n})=0$ for all $1\leq k<\ell\leq n$, so applying Doob's inequality for the martingale $\{S_{k,n}^{*}\}_{1\leq k\leq n}$ and the identity $\E(X-\E(X \, | \, \iF))^2=\E (X^2)-\E(\E(X \, | \, \iF))^2\leq \E (X^2)$ implies that
 	\begin{align*} \label{e:1}
 	\begin{split}
 	\P(M_n^{*}>\eps n^{1/r})&\leq \frac{1}{\eps^2} n^{-2/r} \E \left((S_{n,n}^{*})^2\right)  \\
 	&=\frac{1}{\eps^2} n^{-2/r} \sum_{k=1}^{n} \E (Y_{k,n}^2) \\
 	&\leq \frac{1}{\eps^2} n^{-2/r} \sum_{k=1}^{n} \E(X_k^2 I(|X_k|\leq n^{1/r})).
 	\end{split}
 	\end{align*}
 	Therefore
 	\begin{align*}
 	\sum_{n=1}^{\infty}   n^{p/r-2} \P(M_n^{*}>\eps n^{1/r}) &\lesssim
 	\sum_{n=1}^{\infty} n^{p/r-2/r-2}\sum_{k=1}^{n} \E(X_k^2 I(|X_k|\leq n^{1/r})) \\
 	&= \sum_{n=1}^{\infty} n^{-q-2} \sum_{k=1}^{n} \int_{0}^{\infty} t^2 I(t \leq n^{1/r})) \, \mathrm{d} F_k(t) \\
 	&=\sum_{k=1}^{\infty} \int_{0}^{\infty} t^2 \sum_{n\geq \max\{k,t^r\}} n^{-q-2}  \, \mathrm{d} F_k(t) \\
 	&\lesssim \sum_{k=1}^{\infty} \int_{0}^{\infty} t^2 (\max\{k,t^r\})^{-q-1}  \, \mathrm{d} F_k(t) \\
 	&=\sum_{k=1}^{\infty} (A_k+B_k+C_k),
 	\end{align*}
 	where
 	\[A_k=\int_{0}^{R_c} t^2 k^{-q-1} \, \mathrm{d} F_k(t),~
 	B_k=\int_{R_c}^{k^{1/r}} t^2 k^{-q-1} \, \mathrm{d} F_k(t),~ C_k=\int_{k^{1/r}}^{\infty} t^{p-r} \, \mathrm{d} F_k(t).\]
 	Clearly
 	\begin{equation} \label{e:A} \sum_{k=1}^{\infty} A_k\leq \sum_{k=1}^{\infty} R_c^2 k^{-q-1} <\infty.
 	\end{equation}
 	Let $k\geq R_c^r$. Using that  $x^{-c} f(x)$ is non-increasing if $x\geq R_c$, and $x^{r} f(x)$ is non-decreasing, we obtain that
 	\begin{align*} \label{kq1}
 	B_k+C_k&\leq k^{-q-1} \int_{R_c}^{k^{1/r}} t^2 \frac{t^{p-2} f(t)}{k^{(p-2)/r} f(k^{1/r})} \, \mathrm{d} F_k(t)+
 	\int_{k^{1/r}}^{\infty} t^{p-r} \frac{t^r f(t)}{k f(k^{1/r})} \, \mathrm{d} F_k(t) \notag \\
 	&\leq \frac{1}{k f(k^{1/r})} \int_{R_c}^{\infty} t^{p} f(t) \, \mathrm{d} F_k(t) \leq  \frac{\E(|X_k|^p f(|X_k|))}{k f(k^{1/r})}\leq \frac{C}{kf(k^{1/r})}.
 	\end{align*}
 	
 	By Fact~\ref{f:2} we have $A_k+B_k+C_k\leq \E(|X_k|^{p-r})<\infty$ for all $k$, so the above inequality with \eqref{e:A} and Fact~\ref{f:1} imply that
 	\begin{equation} \label{e:Sn1} \sum_{n=1}^{\infty}   n^{p/r-2} \P(M_n^{*}>\eps n^{1/r})\lesssim \sum_{k=1}^{\infty} (A_k+B_k+C_k)<\infty.
 	\end{equation}
 	Applying Doob's inequality for the martingale $\{S_{k,n}^{**}\}_{1\leq k\leq n}$, the triangle inequality, Jensen's inequality (for the conditional expectation as well), and the law of total expectation in this order implies that 
 	\begin{align*} 
 	\P(M_n^{**}>\eps n^{1/r})&\lesssim n^{-1/r} \E(|S_{n,n}^{**}|)  \\
 	&\leq n^{-1/r} \sum_{k=1}^{n} \E(|Z_{k,n}|) \\
 	&\leq 2n^{-1/r}\sum_{k=1}^{n} \E(|X_k| I(|X_k|> n^{1/r})) \\
 	&\leq 2n^{-1/r}\sum_{k=1}^{n} \frac{\E(|X_k| |X_k|^{p-1} f(|X_k|) I(|X_k|> n^{1/r}))}{n^{(p-1)/r}f(n^{1/r})} \\
 	&=2\frac{n^{-p/r}}{f(n^{1/r})} \sum_{k=1}^n 
 	\E(|X_k|^p f(|X_k|)) \\
 	&\leq 2C\frac{n^{1-p/r}} {f(n^{1/r})}.
 	\end{align*}
 	The above inequality and Fact~\ref{f:1} yield that 
 	\begin{equation} \label{e:Sn2} 
 	\sum_{n=1}^{\infty}   n^{p/r-2} \P(M_n^{**}>\eps n^{1/r})\lesssim \sum_{n=1}^{\infty} \frac{1}{nf(n^{1/r})}<\infty.
 	\end{equation}
 	
 	Finally, \eqref{e:M}, \eqref{e:Sn1}, and \eqref{e:Sn2} 
 	imply that
 	\begin{align*} \sum_{n=1}^{\infty}   n^{p/r-2} \P(M_n>\eps n^{1/r})&\leq \sum_{n=1}^{\infty}   n^{p/r-2} \P(M_n^{*}>(\eps/2) n^{1/r}) \\ &+\sum_{n=1}^{\infty}   n^{p/r-2} \P(M_n^{**}>(\eps/2)n^{1/r})<\infty.
 	\end{align*}
 	The proof is complete.
 \end{proof}
 
\subsection{The case $p\geq 2$}
 
The goal of the subsection is to prove Theorems~\ref{t:MDS2} and~\ref{t:cMDS}. 
 
 \begin{theorem} \label{t:MDS2} Let $f\colon [0,\infty)\to \R^+$ be a non-decreasing function such that
 	\[\sum_{n=1}^{\infty} \frac{1}{f(2^n)}<\infty.\]
 	Let $0<r<2\leq p$ and let $q=q(r,p)=2(p-r)/(2-r)$. Let $\{X_n\}_{n\geq 1}$ be a martingale difference sequence such that $\sup_{n\geq 1} \E(|X_n|^q f(|X_n|))<\infty$. Then for all $\eps>0$ we have \[\sum_{n=1}^{\infty} n^{p/r-2} \P(M_n>\eps n^{1/r})<\infty.\]
 \end{theorem}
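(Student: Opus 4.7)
My plan is to adapt the strategy of Miao--Yang--Stoica behind Theorem~\ref{t:MYS}, refining it so that it works at the critical exponent $q=q(r,p)=2(p-r)/(2-r)$. Fix $\eps>0$ and set $C=\sup_n\E(|X_n|^qf(|X_n|))$. Using the reduction results of Section~\ref{s:tl} (in particular Lemma~\ref{l:sqrt}), I may assume without loss of generality that $f$ is slowly varying in the sense $\lim_{n}f(2^{n+1})/f(2^n)=1$ and that $f_q(x)=x^qf(\sqrt x)$ agrees on $[N_q,\infty)$ with an increasing convex function. I then perform the martingale-difference preserving truncation at level $n^{1/r}$: set $Y_{k,n}=X_k\,I(|X_k|\leq n^{1/r})-\E(X_k\,I(|X_k|\leq n^{1/r})\mid\iF_{k-1})$ and $Z_{k,n}=X_k-Y_{k,n}$. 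Both sequences are martingale differences, and with $M_n^*$, $M_n^{**}$ denoting the maximal functions of their partial sums, $M_n\leq M_n^*+M_n^{**}$.

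For $M_n^{**}$, I would apply Doob's $L^1$ maximal inequality. Since $q\geq 1$ and $x\mapsto x^{q-1}f(x)$ is non-decreasing, for $|X_k|>n^{1/r}$ one has $|X_k|\leq n^{(1-q)/r}|X_k|^qf(|X_k|)/f(n^{1/r})$, so $\E|X_k\,I(|X_k|>n^{1/r})|\leq Cn^{(1-q)/r}/f(n^{1/r})$. This gives $\P(M_n^{**}>\eps n^{1/r}/2)\lesssim n^{1-q/r}/f(n^{1/r})$. Because $p\geq 2$ forces $q\geq p$, Fact~\ref{f:1} yields $\sum_n n^{p/r-2}\cdot n^{1-q/r}/f(n^{1/r})=\sum_n n^{(p-q)/r-1}/f(n^{1/r})<\infty$.

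The main step is the estimate of $M_n^*$. I would apply a Burkholder-type square-function inequality $\E(M_n^*)^q\lesssim \E(\sum_kY_{k,n}^2)^{q/2}$, valid since $q\geq 2$, and then bound $\E(\sum_kY_{k,n}^2)^{q/2}$ sharply enough to give a convergent sum. Since $p/r-2-(p-r)/r=-1$, the quantitative target is $\P(M_n^*>\eps n^{1/r})\lesssim n^{-(p-r)/r}/f(n^{1/r})$, which by Fact~\ref{f:1} produces the convergent series $\sum_n 1/(nf(n^{1/r}))$. Achieving this requires a careful decomposition of each $Y_{k,n}^2$ based on the magnitude of $|X_k|$: when $|X_k|$ is comparable to $n^{1/r}$, the monotonicity $f(|X_k|)\gtrsim f(n^{1/r})$ together with the moment hypothesis supply the required $1/f(n^{1/r})$ factor, while the convexity of $f_q$ from Lemma~\ref{l:sqrt} controls the interaction of the pieces and the contribution of small $|X_k|$.

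The principal obstacle is the extraction of the precise $1/f(n^{1/r})$ factor in the bound on $\E(\sum_kY_{k,n}^2)^{q/2}$: the straightforward Burkholder estimate yields only $n^{q/2}$, producing the divergent harmonic series at the critical exponent. The improvement must come from a simultaneous use of the deterministic bound $|Y_{k,n}|\leq 2n^{1/r}$, the variance bound $\sum_k\E Y_{k,n}^2\lesssim n$, the $f$-weighted moment condition, and the convexity properties provided by the technical lemmas of Section~\ref{s:tl}. This refinement is the technical advance over Miao--Yang--Stoica, who assumed $q>q(r,p)$ strict and hence had an extra moment margin that made such delicate balancing unnecessary.
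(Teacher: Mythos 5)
Your estimate for $M_n^{**}$ via Doob's $L^1$ inequality is correct and the arithmetic checks out (using $q\geq p$, which holds since $q-p=r(p-2)/(2-r)\geq 0$). However, the proposal does not actually prove the theorem: for the dominant term $M_n^*$ you only state the quantitative target $\P(M_n^*>\eps n^{1/r})\lesssim n^{-(p-r)/r}/f(n^{1/r})$ and then explicitly acknowledge that the "straightforward Burkholder estimate yields only $n^{q/2}$" and that the "improvement must come from" an unspecified balancing of the truncation bound, the variance bound, the $f$-weighted moment, and convexity. That balancing \emph{is} the theorem; leaving it as an acknowledged obstacle means the argument is not completed. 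There is no reason offered why such a decomposition would close the factor-of-$f$ gap at the critical exponent, and in fact the whole truncation route is a detour.

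The paper proceeds quite differently, and more cleanly: there is no truncation at all. Using Lemma~\ref{l:sqrt} one builds an increasing convex $f_{q/2}$ with $f_{q/2}(0)=0$, $f_{q/2}$ linear near $0$, and $f_{q/2}(x)=x^{q/2}f(\sqrt{x})-a$ for large $x$, then sets $g_q(x)=f_{q/2}(x^2)$, which is convex, increasing, vanishes at $0$, satisfies the doubling condition \eqref{eq:gc}, and equals $x^qf(x)-a$ for large $x$. Applying Markov's inequality with $g_q$, then the Burkholder--Davis--Gundy inequality (Theorem~\ref{t:BDG}) to the rescaled martingale $\{S_i/\sqrt{n}\}_{1\leq i\leq n}$, gives
\[
\P(M_n>\eps n^{1/r})\leq \frac{C\,\E f_{q/2}\!\left(\tfrac1n\sum_{i=1}^n X_i^2\right)}{g_q(\eps n^{1/r-1/2})},
\]
and Jensen's inequality for the convex $f_{q/2}$ applied to the arithmetic mean $\frac1n\sum X_i^2$ gives $\E f_{q/2}(\frac1n\sum X_i^2)\leq\frac1n\sum\E g_q(|X_i|)\leq K$. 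The normalization by $\sqrt{n}$ combined with the structure $g_q(x)=f_{q/2}(x^2)$ is precisely what converts the BDG square function into an arithmetic mean to which Jensen applies, and this is what extracts the factor $f(\eps n^{1/r-1/2})$ in the denominator at the critical exponent. Your proposal never achieves that mechanism. If you want to repair your version, the key idea to import is: do not truncate, but apply BDG to $g_q$ with the $\sqrt{n}$-scaling so that convexity of $f_{q/2}$ can be leveraged through Jensen.
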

 
Before proving the above theorem we need the next inequality due to Burkholder, Davis, and Gundy, see \cite[Theorem~1.1]{BDG} or \cite[Theorem~15.1]{B}.

\begin{theorem}[Burkholder--Davis--Gundy Inequality] \label{t:BDG} Let $g\colon [0,\infty)\to [0,\infty)$ be a convex function such that $g(0)=0$ and there is a constant $c\in \R^+$ such that
	\begin{equation} \label{eq:gc} g(2x)\leq cg(x) \quad \textrm{for all } x>0.
	\end{equation} 
	Then there exists a constant $C\in \R^+$ depending only on $c$ such that for every martingale difference sequence $\{X_i\}_{i\geq 1}$ for all $n\in \N^+$ we have 
	\[\E g(M_n)\leq C \E g\left(\sqrt{X_1^2+\dots+ X_n^2}\right).\]
\end{theorem}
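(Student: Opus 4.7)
I would begin by truncating each $X_k$ at level $n^{1/r}$, writing $X_k = Y_{k,n} + Z_{k,n}$ with conditional centering so that both $\{Y_{k,n}\}$ and $\{Z_{k,n}\}$ remain martingale difference sequences. Letting $M_n^*$ and $M_n^{**}$ denote the corresponding maximal partial sums, we have $M_n \leq M_n^* + M_n^{**}$, so it suffices to bound $\sum n^{p/r-2}\P(M_n^*>(\eps/2)n^{1/r})$ and $\sum n^{p/r-2}\P(M_n^{**}>(\eps/2)n^{1/r})$ separately.

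The tail piece $M_n^{**}$ is handled by Doob's $L^1$ inequality: $\P(M_n^{**}>\eps n^{1/r}) \lesssim n^{-1/r}\sum_k \E|X_k|I(|X_k|>n^{1/r})$. On $\{|X_k|>n^{1/r}\}$ the pointwise bound $|X_k|\leq |X_k|^q f(|X_k|)/(n^{(q-1)/r}f(n^{1/r}))$ holds (since $q\geq 1$ and $f$ is non-decreasing), which yields $\P(M_n^{**}>\eps n^{1/r})\lesssim n^{1-q/r}/f(n^{1/r})$. Since $p/r-1-q/r = (r-p)/(2-r)\leq -1$ with equality only at $p=2$, the corresponding weighted sum converges by Fact~\ref{f:1}.

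The truncated piece $M_n^*$ is the main work, and my plan is to use the Burkholder--Davis--Gundy inequality with a weight function supplied by Lemma~\ref{l:sqrt}. By Fact~\ref{f:2}, one may replace $f$ by a regularized version so that $f_q(x)=x^q f(\sqrt{x})$ admits a convex, non-decreasing extension $g\colon[0,\infty)\to[0,\infty)$ with $g(0)=0$ and the doubling condition $g(2x)\leq C g(x)$ (the latter following from $f(2^{n+1})/f(2^n)\to 1$, which gives $f(\sqrt{2x})/f(\sqrt{x})\to 1$). BDG then yields $\E g(M_n^*)\leq C\,\E g(\sqrt{V_n}) \leq C\,\E V_n^{q/2}\,f(V_n^{1/4})+O(1)$ for $V_n=\sum_k Y_{k,n}^2$, and Markov's inequality gives
\[
\P(M_n^*>\eps n^{1/r})\;\lesssim\; \frac{\E V_n^{q/2}\,f(V_n^{1/4})}{n^{q/r}\,f(n^{1/(2r)})}.
\]
Using a Burkholder--Rosenthal-type decomposition of $\E V_n^{q/2}\,f(V_n^{1/4})$, with the contribution of $\sum\E|Y_{k,n}|^q f(|Y_{k,n}|)$ bounded by $Cn$ directly from the moment hypothesis and the remaining $(\sum\E Y_{k,n}^2)^{q/2}$-piece handled via the slow variation of $f$, one arrives at $\P(M_n^*>\eps n^{1/r})\lesssim n^{-(p-r)/r}/f(n^{1/(2r)})$. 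The algebraic identity $p/r-2+q(1/2-1/r)=-1$ at the critical exponent $q=q(r,p)$ then turns $\sum n^{p/r-2}\P(M_n^*>\eps n^{1/r})$ into $\sum 1/(n\,f(n^{1/(2r)}))$, which converges by Fact~\ref{f:1}.

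The hard part will be the estimate of $\E V_n^{q/2}\,f(V_n^{1/4})$ at the critical exponent. The more natural choice $g(x)=x^q f(x)$ (from Corollary~\ref{c:1}(\eqref{eq:gq})) produces a factor $f(\sqrt{V_n})$, bounded only by the crude deterministic $f(cn^{1/2+1/r})$, and the resulting series $\sum n^{-1}\,f(n^{1/2+1/r})/f(n^{1/r})$ is borderline-divergent. The decisive improvement is the use of $g(x)=x^q f(\sqrt{x})$ from Lemma~\ref{l:sqrt}: Markov applied to this $g$ introduces the strictly smaller exponent $1/(2r)$ inside $f$ in the denominator, creating precisely the margin that Fact~\ref{f:1} converts into summability. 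The residual $f(V_n^{1/4})$ in the numerator is then absorbed using the monotonicity of $x^{-c}f(x)$ for $c=q-2\geq 0$ from Corollary~\ref{c:1}(\eqref{eq:hc}) (reminiscent of the $B_k$-estimate in the proof of Theorem~\ref{t:MDS}), with $p>2$ and the boundary $p=2$ requiring slightly separate treatment.
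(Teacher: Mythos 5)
Your proposal does not prove the stated theorem. The statement to be proved here is the Burkholder--Davis--Gundy inequality itself: a single moment comparison $\E g(M_n)\leq C\,\E g\bigl(\sqrt{X_1^2+\dots+X_n^2}\bigr)$ for a convex $g$ with $g(0)=0$ satisfying the doubling condition \eqref{eq:gc}, with $C$ depending only on the doubling constant $c$. What you have written instead is a proof sketch of Theorem~\ref{t:MDS2} (the Baum--Katz type series estimate for martingale difference sequences with $p\geq 2$), and in your third paragraph you explicitly invoke ``the Burkholder--Davis--Gundy inequality'' as a tool --- that is, you assume the very statement you were asked to prove. None of the quantities $r$, $p$, $\eps$, $f$, or the weighted series $\sum_n n^{p/r-2}\P(\cdot)$ appear in Theorem~\ref{t:BDG}, so the truncation $X_k=Y_{k,n}+Z_{k,n}$, the splitting $M_n\leq M_n^{*}+M_n^{**}$, and the entire summability analysis are aimed at a different target.

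For the record, the paper does not prove Theorem~\ref{t:BDG} either; it is quoted as a classical result from Burkholder--Davis--Gundy (1972) and Burkholder (1973). A genuine proof would run along entirely different lines: one establishes a good-$\lambda$ (distribution function) inequality of the form $\P\bigl(M_n>\beta\lambda,\ \sqrt{X_1^2+\dots+X_n^2}\leq\delta\lambda\bigr)\leq\eps(\beta,\delta)\,\P(M_n>\lambda)$ via stopping times, and then integrates both sides against the measure induced by $g$, using the growth condition \eqref{eq:gc} to absorb the resulting constants. None of that machinery appears in your sketch, so as a proof of the stated theorem the proposal is circular and has to be redone from scratch.
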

 
 \begin{proof}[Proof of Theorem~\ref{t:MDS2}]
Let $\eps>0$ be arbitrarily fixed. 
As $q\geq 2$, by Lemma~\ref{l:sqrt} and Fact~\ref{f:2} we may assume that 
\begin{equation} \label{eq:f2n} \lim_{n\to \infty} \frac{f(2^{n+1})}{f(2^n)}=1
\end{equation} and there is an increasing convex function 
$f_{q/2}\colon [0,\infty)\to [0,\infty)$ and $N,a\in \R^+$ 
such that $f_{q/2}(0)=0$ and $f_{q/2}$ is linear on $[0,N]$, and for all $x\geq N$ we have 
\[f_{q/2}(x)=x^{q/2} f(\sqrt{x})-a.\] 
Define $g_q\colon [0,\infty)\to [0,\infty)$ as $g_q(x)=f_{q/2}(x^2)$. Since $g_q$ is a composition of convex increasing functions, it is increasing and convex. Clearly we have $g_q(0)=0$. Moreover, $g_q(x)=bx^2$ for $x\in [0,\sqrt{N}]$ with some constant $b>0$, 
and for $x\geq N$ we obtain 
\[g_q(x)=x^q f(x)-a.\] 
Fact~\ref{f:2} yields that 
\begin{equation} \label{eq:Cgq} 
\sup_{n\geq 1} \E g_q(|X_n|)=K<\infty.
\end{equation} 
Let $h_q\colon (0,\infty)\to \R^+$ be defined as 
\[h_q(x)=\frac{g_q(2x)}{g_q(x)}.\] 
As $g_q(x)>0$ for all $x>0$, the function $h_q$ is well defined, and the continuity of $g_q$ implies that $h_q$ is continuous, too. 
Since $g_q(x)=bx^2$ for $x\in [0,\sqrt{N}]$, we have $h_q(x)=4$ for $x\in (0,\sqrt{N}/2)$. By \eqref{eq:f2n} we obtain that $\limsup_{x\to \infty} h_q(x)<\infty$, 
so the continuity of $h_q$ implies that $h_q$ is bounded. Therefore $g_q$ satisfies the growth condition \eqref{eq:gc}. 

Let us choose $K\in \R^+$ such that $\eps^{q} f(\eps n^{1/r-1/2})>2a$ for each $n\geq K$. Define \[L=\max\left\{K,\left(\sqrt{N}/\eps\right)^{2r/(2-r)}\right\}.\] 
Applying that $g_q$ is increasing, Markov's inequality, Theorem~\ref{t:BDG} for $g_q$ and the finite martingale $\{S_i/\sqrt{n}\}_{1\leq i\leq n}$, 
Jensen's inequality for $f_{q/2}$, and \eqref{eq:Cgq} implies that for all $n\geq L$ we have   	
\begin{align*} \P(M_n>\eps n^{1/r})&=\P(g_q(M_n/\sqrt{n})> g_q(\eps n^{1/r-1/2})) \\
&\leq \frac{\E g_q(M_n/\sqrt{n})}{g_q(\eps n^{1/r-1/2})} \\
&\leq \frac{C\E\left(g_q\left(\sqrt{(1/n)\sum_{i=1}^{n} X_i^2}\right)\right)}{g_q(\eps n^{1/r-1/2})} \\
&=\frac{C\E\left(f_{q/2}\left((1/n)\sum_{i=1}^{n} X_i^2\right)\right)}{g_q(\eps n^{1/r-1/2})} \\
&\leq \frac{C(1/n)\sum_{i=1}^{n} \E f_{q/2}(X_i^2)}{g_q(\eps n^{1/r-1/2})} \\
&= \frac{C(1/n)\sum_{i=1}^{n} \E g_q(|X_i|)}{g_q(\eps n^{1/r-1/2})} \\
&\leq \frac{CK}{\eps^q n^{q(1/r-1/2)} f(\eps n^{1/r-1/2})-a} \\
&\leq \frac{2CK\eps^{-q} n^{1-p/r}}{f(\eps n^{1/r-1/2})} \\
&\lesssim  \frac{n^{1-p/r}}{f(\eps n^{1/r-1/2})}.
\end{align*}
Therefore the above inequality and Fact~\ref{f:1} imply that
\[\sum_{n\geq L} n^{p/r-2} \P(M_n>\eps n^{1/r})\lesssim \sum_{n\geq L}\frac{1}{n f(\eps n^{1/r-1/2})}<\infty.\]
The proof is complete. 
\end{proof}

\begin{theorem} \label{t:cMDS} Let $f\colon [0,\infty)\to \R^+$ be a non-decreasing function such that
	\[\sum_{n=1}^{\infty} \frac{1}{f(2^n)}=\infty.\]
Let $0<r<2\leq p$ and let $q=q(r,p)=2(p-r)/(2-r)$. Then there is a martingale difference sequence $\{X_n\}_{n\geq 1}$ such that 
$\sup_{n\geq 1} \E(|X_n|^q f(|X_n|))<\infty$ and 
\[\sum_{n=1}^{\infty} n^{p/r-2} \P(|S_n|>n^{1/r})=\infty.\]
\end{theorem}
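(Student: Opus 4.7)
The plan is to construct the counterexample explicitly in dyadic blocks $I_k=[2^{k-1}+1,2^k]$. Let $\{B_k\}_{k\geq 1}$ be independent Bernoullis with $\P(B_k=1)=p_k$, and let $\{\eta_{k,j}\}_{k,j\geq 1}$ be an independent family of symmetric $\pm 1$ Rademacher random variables, also independent of the $B_k$'s. For $n\in I_k$ define
\[X_n=a_k\,B_k\,\eta_{k,\,n-2^{k-1}},\qquad a_k:=2\sqrt{2}\cdot 2^{k(1/r-1/2)},\qquad p_k:=\min\!\bigl\{\tfrac12,\ \tfrac{c}{2^{k(p-r)/r}f(a_k)}\bigr\},\]
for any $c>0$. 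Since $0<r<2\leq p$, both $1/r-1/2$ and $(p-r)/r$ are strictly positive, so $a_k\to\infty$ and $\sum_k p_k<\infty$.

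First I would verify that $\{X_n\}_{n\geq 1}$ is a martingale difference sequence with respect to the enriched filtration $\iF_n=\sigma(B_j:j\geq 1)\vee\sigma(\eta_{k',j'}:\text{the global index of }\eta_{k',j'}\text{ is at most }n)$. Indeed, $\eta_{k,n-2^{k-1}}$ is independent of $\iF_{n-1}$ and centered, giving $\E(X_n\mid\iF_{n-1})=a_k B_k\cdot 0=0$. Next I would check the moment bound: $|X_n|$ equals $a_k$ with probability $p_k$ and vanishes otherwise, and the identity $q(1/r-1/2)=(p-r)/r$ gives $a_k^q\asymp 2^{k(p-r)/r}$, so the calibration of $p_k$ yields $\E(|X_n|^q f(|X_n|))=p_k a_k^q f(a_k)\lesssim 1$ uniformly in $n$.

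The heart of the argument is the lower bound on the target series. Because $\sum_k p_k<\infty$, the product $P_\infty:=\prod_{j\geq 1}(1-p_j)$ is strictly positive. Consider the events $A_k=\{B_k=1,\,B_j=0\text{ for all }j<k\}$, which satisfy $\P(A_k)\geq p_k P_\infty$. On $A_k$ the contributions from blocks $I_1,\dots,I_{k-1}$ vanish, so for $n\in I_k$ one has $S_n=a_k R_m^{(k)}$ where $R_m^{(k)}:=\sum_{j=1}^m\eta_{k,j}$ and $m=n-2^{k-1}$. Restricting to $n\in J_k:=[3\cdot 2^{k-2},2^k]$ one verifies $m\geq 2^{k-2}$ and $2n^{2/r}/a_k^2\leq 2^{k-2}\leq m$, hence $n^{1/r}/a_k\leq\sqrt{m/2}$. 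Paley--Zygmund applied to $(R_m^{(k)})^2$ with $\E(R_m^{(k)})^2=m$ and $\E(R_m^{(k)})^4\leq 3m^2$ gives $\P(|R_m^{(k)}|>\sqrt{m/2})\geq 1/12$, uniformly in $k$. Using the independence of $A_k$ from $R_m^{(k)}$, we obtain $\P(|S_n|>n^{1/r})\geq (P_\infty/12)\,p_k$ for every $n\in J_k$.

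Summing over $n\in J_k$, which has cardinality of order $2^k$ and on which $n^{p/r-2}\asymp 2^{k(p/r-2)}$, gives a block contribution of $\gtrsim p_k\cdot 2^{k(p/r-1)}$. Saturation of the moment bound collapses this to $\asymp 1/f(a_k)$, and since $1/r-1/2>0$, Fact~\ref{f:1} converts the hypothesis $\sum_n 1/f(2^n)=\infty$ into $\sum_k 1/f(a_k)=\infty$, which is the desired divergence. The only delicate point will be the MDS bookkeeping caused by the within-block coupling through $B_k$; the enriched filtration resolves it cleanly, after which the Paley--Zygmund estimate and the appeal to Fact~\ref{f:1} are routine.
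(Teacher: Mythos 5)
Your proposal is correct and reaches the same conclusion as the paper, but by a genuinely different route. The paper also partitions into blocks (powers of $4$ rather than $2$) and also sets $X_n$ equal to a common block magnitude times an independent Rademacher sign; but it establishes the lower bound $\P(|S_n|>n^{1/r})\gtrsim p_k$ by conditioning on the fixed value $S=x$ of the history \emph{preceding} the relevant partial block and observing that either $x+4^{k/r}>n^{1/r}$ or $x-4^{k/r}<-n^{1/r}$, then invoking the central limit theorem to get a uniform lower bound on the Rademacher sum exceeding its standard deviation. You instead isolate the ``first firing'' events $A_k=\{B_k=1,\,B_j=0\ \forall j<k\}$, which makes the history vanish outright; this is legitimate precisely because $p>r$ forces $p_k$ to decay geometrically and thus $\prod_j(1-p_j)>0$, and you then replace the CLT by the Paley--Zygmund inequality applied to $R_m^2$ (with $\E R_m^4\leq 3m^2$) to get the uniform constant $1/12$. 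Your route is a bit more elementary and quantitative (no appeal to the CLT or uniformity over $m$ of a normal approximation); the paper's symmetry argument is more robust in that it never needs the firing probabilities to be summable. The algebraic calibration ($q(1/r-1/2)=(p-r)/r$, hence $p_k a_k^q f(a_k)\asymp 1$, and the block sum collapsing to $1/f(a_k)$ with $a_k\asymp 2^{k(1/r-1/2)}$ feeding into Fact~\ref{f:1}) checks out. One small point: your enriched filtration has $\iF_0=\sigma(B_j:j\geq 1)\neq\{\emptyset,\Omega\}$, which technically does not match the paper's definition of a martingale difference sequence; this is harmless since $\E(X_n\mid\iF_{n-1})=0$ for the enriched filtration implies the same for the natural filtration $\sigma(X_1,\dots,X_{n-1})$ by the tower property, but it is cleaner to state the MDS property directly for the natural filtration (conditioning through $\sigma(B_k,X_1,\dots,X_{n-1})$, under which $\eta_{k,m}$ is still independent).
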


\begin{proof} 
	Let $\{Y_n,Z_k\}_{n,k\geq 1}$ be independent random variables such that for all $n\in \N^+$ we have 
	\[\P(Y_n=1)=\P(Y_n=-1)=\frac 12.\] 
	For all $k\in \N^+$ let 
	\[p_k=\frac{4^{k(1-p/r)}}{f(4^{k(1/r-1/2)})}.\] 
	Fix $k_0\geq 2$ such that
	for all $k\geq k_0$ we have $p_k<1$. We define $Z_k\equiv 0$ if $k\leq k_0$ and for $k>k_0$ let 
	\[\P(Z_k=4^{k(1/r-1/2)})=p_k \quad \textrm{and} \quad \P(Z_k=0)=1-p_k.\]
	For all $k\in \N^+$ and $4^{k-1}\leq n<4^k$ let us define $X_n=Y_nZ_k$. Clearly we have 
	$\sup_{n\geq 1} \E(|X_n|^q f(|X_n|)=1$. Assume that $X_i\colon \Omega \to \R$ are random variables on 
	the probability space $(\Omega, \iF, \P)$. Let $\iF_0=\{\emptyset, \Omega\}$ and let $\iF_n=\sigma(X_1,\dots, X_n)$ for all $n\in \N^+$. 
	We show that $\{X_n\}_{n\geq 1}$ is a martingale difference sequence with respect to the natural filtration $\{\iF_n\}_{n\geq 1}$. 
	Fix $n,k\in \N^+$ with $4^{k-1}\leq n<4^k$. Indeed, as $Y_n$ is independent of $\{Z_1,\dots,Z_k,Y_1,\dots Y_{n-1}\}$, 
	it is independent of $\sigma(Z_k,\iF_{n-1})$, so a property of conditional expectation implies that for all $n\in \N^+$ we have
	\[\E(X_n \, | \, \iF_{n-1})=\E(Y_n Z_k \, | \, \iF_{n-1})=\E(Y_n) \E(Z_k \, |\, \iF_{n-1})=0,\]
so $\{X_n\}_{n\geq 1}$ is really a martingale difference sequence. By the central limit theorem there is an absolute constant $c>0$ such that for all $k>k_0$ and $2\cdot 4^{k-1} \leq n<4^{k}$ we have 
 \begin{equation} \label{eq:kisc} \P(Y_{4^{k-1}}+\dots+Y_{n}\geq 2^k)=\P(Y_{4^{k-1}}+\dots +Y_{n}\leq -2^k)\geq c.
 \end{equation} 
 We will prove that for all fixed $k>k_0$ and $2\cdot 4^{k-1} \leq n<4^{k}$ we have 
	\begin{equation} \label{eq:cpk} 
	\P(|S_n|>n^{1/r}) \geq c p_k.
	\end{equation} 
Let us use the notation $S=S_{4^{k-1}-1}$ and fix an arbitrary $x\in \R$ with $\P(S=x)>0$. By the law of total probability in order to prove \eqref{eq:cpk} 
it is enough to show that 
\begin{equation} \label{eq:cpk2} 
\P(|S_n|>n^{1/r} \, | \, S=x)\geq c p_k.
\end{equation}  
	As $4^{k/r}>n^{1/r}$, either $x+4^{k/r}>n^{1/r}$ or $x-4^{k/r}<-n^{1/r}$. We may assume by symmetry that $x+4^{k/r}>n^{1/r}$. It is clear from the definition that $S$ and $S_n-S$ are independent, and the independence of $Z_k$ and $\{Y_{4^{k-1}},\dots ,Y_n\}$, and \eqref{eq:kisc} yield that 
	\begin{align*} 	
	\P(|S_n|>n^{1/r} \, | \, S=x)&\geq \P(S_n-S\geq 4^{k/r} \, | \, S=x)\\
	&=\P(S_n-S\geq 4^{k/r}) \\
	&\geq \P(Y_{4^{k-1}}+\dots +Y_n\geq 2^k,~Z_k=4^{k(1/r-1/2)} )\\
	&=\P(Y_{4^{k-1}}+\dots +Y_{n}\geq 2^k) \P(Z_k=4^{k(1/r-1/2)})\\
	&\geq cp_k.
	\end{align*}
This implies \eqref{eq:cpk2}, so \eqref{eq:cpk} holds. Inequality \eqref{eq:cpk} and Fact~\ref{f:1} yield that
	\begin{align*} \sum_{n=1}^{\infty} n^{p/r-2} \P(|S_n|> n^{1/r}) &\geq \sum_{k>k_0} \sum_{2\cdot 4^{k-1}\leq n< 4^k} n^{p/r-2} \P(|S_n|>n^{1/r}) \\
	&\geq \sum_{k>k_0} (2\cdot 4^{k-1}) (4^{(k-1)(p/r)} 4^{-2k})c p_k \\
	&=c2^{-2p/r-1} \sum_{k>k_0}  \frac{1}{f(4^{k(1/r-1/2)})}=\infty.
	\end{align*}
	The proof is complete.
	\end{proof} 
	
 \section{Independent, negatively associated, and pairwise NQD random variables} \label{s:ind} 
 
 The main goal of this section is to prove 
 Theorems~\ref{t:indep} and~\ref{t:counter}.

 \begin{theorem} \label{t:indep} Let $f\colon [0,\infty)\to \R^+$ be a non-decreasing function such that 
 	\[\sum_{n=1}^{\infty} \frac{1}{f(2^n)}<\infty.\]
 	Let $0<r<2\leq p$ and let $\{X_n\}_{n\geq 1}$ be a sequence of negatively associated, centered random variables such that 
 	$\sup_{n\geq 1} \E(|X_n|^p f(|X_n|))<\infty$. For all $\eps>0$ we have \[\sum_{n=1}^{\infty} n^{p/r-2} \P(M_n>\eps n^{1/r})<\infty.\]
 \end{theorem}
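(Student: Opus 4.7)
My plan is to follow the same general strategy as the proof of Theorem~\ref{t:MDS2}, with Shao's moment inequality for negatively associated sequences~\cite{Sh} playing the role of the Burkholder--Davis--Gundy inequality. Shao's comparison theorem states that for any convex $\phi$ and any centered NA sequence $\{X_i\}$, $\E\phi(S_n)$ is dominated by the corresponding quantity for independent variables with the same marginals; in particular, for $p\geq 2$ this yields the Rosenthal--type maximal inequality
\[\E\max_{k\le n}|S_k|^{p}\leq C_p\Bigl(\sum_{i=1}^{n}\E|X_i|^{p}+\Bigl(\sum_{i=1}^{n}\E X_i^{2}\Bigr)^{p/2}\Bigr).\]

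To exploit this, I would first truncate $X_i$ via the order-preserving cap $Y_i=(-n^{1/r})\vee X_i\wedge n^{1/r}$. Because $x\mapsto(-n^{1/r})\vee x\wedge n^{1/r}$ is non-decreasing, $\{Y_i\}_{i\leq n}$ remains NA, as does the centered family $\tilde Y_i := Y_i-\E Y_i$, which moreover satisfies $|\tilde Y_i|\leq 2n^{1/r}$. Decomposing $S_n=\sum\tilde Y_i+\sum(X_i-Y_i)+\sum\E Y_i$, the residual $\sum(X_i-Y_i)$ is supported on $\bigcup_{i\le n}\{|X_i|>n^{1/r}\}$, and the $f$-weighted Markov bound $\P(|X_i|>n^{1/r})\leq\E(|X_i|^pf(|X_i|))/(n^{p/r}f(n^{1/r}))$ gives a contribution of order $n^{1-p/r}/f(n^{1/r})$, which is summable against $n^{p/r-2}$ by Fact~\ref{f:1}. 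The mean correction $|\sum\E Y_i|$ is $o(n^{1/r})$ for $p>r$ and can be absorbed into $\eps n^{1/r}$.

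For the main piece I would apply Shao's inequality to $\{\tilde Y_i\}$, combine with Markov's inequality, and estimate each $\E|\tilde Y_i|^p$ and $\E\tilde Y_i^2$ by the truncated integrals $\int_0^{n^{1/r}} t^p\,dF_i(t)$ and $\int_0^{n^{1/r}} t^2\,dF_i(t)$ (plus small tail corrections controlled as above). Summing in $n$ with weight $n^{p/r-2}$ and switching the order of summation by Fubini reduces everything to sums of the form $\sum_i\E(|X_i|^p f(|X_i|))/(if(i^{1/r}))$ and analogous variance expressions, which converge by the $f$-weighted moment hypothesis together with Fact~\ref{f:1}, just as in the corresponding computations in the proof of Theorem~\ref{t:MDS}.

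The main obstacle I anticipate is the second term $(\sum_i\E\tilde Y_i^{2})^{p/2}$ in Shao's inequality, since the naive bound $\E\tilde Y_i^{2}\leq\sup_i\E X_i^{2}=O(1)$ yields $(\sum_i\E\tilde Y_i^{2})^{p/2}=O(n^{p/2})$ and a divergent weighted sum $\sum_n n^{p/2-2}$ for $p\geq 2$. Closing this gap requires using the truncation bound $|\tilde Y_i|\leq 2n^{1/r}$ to reduce the variance sum to a $p$-th moment sum and then invoking the $f$-weighted moment via the Fubini switch; this is precisely what makes the NA critical exponent equal to $p$ rather than the larger MDS critical exponent $q(r,p)=2(p-r)/(2-r)$ of Theorem~\ref{t:MDS2}.
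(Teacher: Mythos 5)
Your route is genuinely different from the paper's. The paper applies Shao's \emph{exponential} (Bernstein-type) inequality for NA sequences (Theorem~\ref{t:Shao}) directly to $M_n$: with $x=\eps n^{1/r}$, $a=x/N$ ($N$ large), $\alpha=1/2$, the exponential term decays like $n^{-N(2-r)/(8r)}$, which can be made $\leq n^{-p/r}$ by choosing $N=8p/(2-r)$; the remaining term $\P(\max_i|X_i|>\eps n^{1/r}/N)$ is bounded by a union bound and the $f$-weighted Markov inequality. No truncation, no Rosenthal, no Fubini. Your plan --- truncate, center, and apply Shao's \emph{moment} comparison (i.e., a Rosenthal-type maximal inequality) --- is a legitimate alternative, closer in spirit to the proof of Theorem~\ref{t:MDS}, and it is precisely how Kuczmaszewska's Theorem~\ref{t:K} is obtained under the weak mean domination hypothesis.

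However, there is a genuine gap in your handling of the variance term, which you correctly identify as the crux. Applying the Rosenthal inequality at exponent $p$ cannot work, and your proposed fix --- \emph{``using the truncation bound $|\tilde Y_i|\leq 2n^{1/r}$ to reduce the variance sum to a $p$-th moment sum''} --- goes the wrong way for $p\geq 2$: when $p\geq 2$ the inequality $t^2\leq(2n^{1/r})^{2-p}t^p$ fails on $[0,2n^{1/r}]$ (the exponent $2-p$ is nonpositive), so you cannot convert $\E\tilde Y_i^2$ into a small multiple of $\E|\tilde Y_i|^p$. The only available bound for $\sum_i\E\tilde Y_i^2$ is $O(n)$, and $(Cn)^{p/2}n^{-p/r}=n^{-p(2-r)/(2r)}$, giving $n^{p/r-2}\cdot n^{-p(2-r)/(2r)}=n^{p/2-2}$, which diverges for all $p\geq 2$. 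The correct repair is to raise the Rosenthal exponent: fix $s>q(r,p)=2(p-r)/(2-r)$, apply Shao's moment inequality at level $s$, keep the crude $(\sum_i\E\tilde Y_i^2)^{s/2}\leq(Cn)^{s/2}$ for the variance term (which is now summable precisely because $s>q(r,p)$), and handle the term $\sum_i\E|\tilde Y_i|^s$ via the truncation $|\tilde Y_i|\leq 2n^{1/r}$ together with the Fubini switch and the eventual monotonicity of $x\mapsto x^{s-p}/f(x)$ (Corollary~\ref{c:1}~\eqref{eq:hc}), which is what actually extracts the $1/f(n^{1/r})$ factor needed for summability. With that modification your argument goes through; as written, the variance step is incorrect and the claim that truncation alone accounts for the drop from $q(r,p)$ to $p$ misattributes the mechanism --- what does the work is the freedom to take the Rosenthal exponent arbitrarily large (or, in the paper's proof, the strength of Shao's exponential inequality), not the truncation per se.
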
 
 
First we need the following inequality of Shao~\cite[Theorem~3]{Sh}.
 
 \begin{theorem}[Shao] \label{t:Shao}
 	Let $\{X_i: 1\leq i\leq n\}$ be a centered, negatively associated sequence of random variables with finite second moments. Let $M_n=\max_{1\leq k\leq n} |S_k|$ and $B_n=\sum_{i=1}^{n}\E(X_i^2)$. Then for all $x>0$, $a>0$, and $0<\alpha<1$ we have 
 \begin{align*} 
 \P(M_n\geq x)&\leq 2\P\left(\max_{1\leq i\leq n} |X_i|>a\right) \\
 &\quad+\frac{2}{1-\alpha} \exp\left(-\frac{x^2\alpha}{2(ax+B_n)} 
 \left(1+ \frac 23 \log \left(1+\frac{ax}{B_n}\right)\right)\right).
 \end{align*}
 \end{theorem}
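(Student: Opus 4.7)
The plan is to deduce Theorem~\ref{t:indep} as a direct consequence of Shao's maximal inequality (Theorem~\ref{t:Shao}), applied to the prefix $\{X_1,\dots,X_n\}$ with the tuning parameters $x=\eps n^{1/r}$, $\alpha=1/2$, and truncation level $a=\eps n^{1/r}/K$, where $K=K(p,r)>0$ is a large constant to be chosen at the end. Before running Shao's bound I would first reduce the second-moment data: since $f$ is non-decreasing and positive, $f(x)\geq f(0)>0$, so the hypothesis $\sup_{n\geq 1}\E(|X_n|^p f(|X_n|))\leq M$ gives $\sup_{n\geq 1}\E(|X_n|^p)\leq M/f(0)$, and since $p\geq 2$, H\"older (equivalently $\|\cdot\|_2\leq \|\cdot\|_p$) yields $\sup_{n\geq 1}\E(X_n^2)\leq C$ for a constant $C$ depending only on $M,f(0),p$. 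Consequently $B_n=\sum_{i=1}^n\E(X_i^2)\leq Cn$ for every $n$.

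For the first term in Shao's bound, $2\P(\max_{i\leq n}|X_i|>a)$, I would apply the union bound and Markov's inequality to the random variable $|X_i|^p f(|X_i|)$, using that $t\mapsto t^p f(t)$ is non-decreasing: $\P(|X_i|>a)\leq \E(|X_i|^p f(|X_i|))/(a^p f(a))\leq M/(a^p f(a))$. Hence $\P(\max_{i\leq n}|X_i|>a)\lesssim n/(n^{p/r}f(\eps n^{1/r}/K))$, and multiplying by $n^{p/r-2}$ yields a term $\lesssim 1/(nf(\eps n^{1/r}/K))$. The resulting series converges by Fact~\ref{f:1}, regardless of $K$.

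For the exponential term I would exploit the crucial inequality $2/r-1>0$. With the chosen $a$ one has $ax/B_n\gtrsim n^{2/r-1}/K\to\infty$, so for all large $n$: $ax+B_n\leq 2ax$ and $\log(1+ax/B_n)\geq \tfrac{1}{2}(2/r-1)\log n$. Inserting these bounds into Shao's exponent gives $\frac{x^2\alpha}{2(ax+B_n)}\geq \frac{x\alpha}{4a}=\frac{K\alpha}{4}$ and $1+\frac{2}{3}\log(1+ax/B_n)\gtrsim (2/r-1)\log n$, so the exponential factor is bounded above by $n^{-c(K)}$ with $c(K)=\alpha K(2/r-1)/12$ (up to lower-order corrections from absorbing constants into large $n$). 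The decisive step is now to choose $K$ so large that $c(K)>p/r-1$; then $\sum_n n^{p/r-2}\cdot n^{-c(K)}<\infty$. Adding the two summability estimates and discarding finitely many initial terms finishes the proof.

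The only delicate point, and what I would identify as the main obstacle, is to verify rigorously that the logarithmic enhancement inside Shao's exponent translates into a polynomial decay of arbitrarily large order as $K\to\infty$ — that is, that the regime $B_n\ll ax$ is genuinely the relevant one for $r<2$ and that the resulting exponent $c(K)$ is indeed linear in $K$. Once this asymptotic bookkeeping is carried out, the rest of the argument is a routine summation controlled by Fact~\ref{f:1}; in particular we never need to invoke any truncation machinery or any feature of negative association beyond what is already packaged inside Theorem~\ref{t:Shao}.
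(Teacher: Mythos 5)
Your proposal does not actually prove the statement in question. The statement is Shao's maximal inequality itself (Theorem~\ref{t:Shao}); you treat it as a black box and use it to derive Theorem~\ref{t:indep}. A proof of Theorem~\ref{t:Shao} would require the machinery of Shao's comparison theorem between negatively associated and independent random variables, leading to a Fuk--Nagaev/Bernstein type exponential bound for the maximum of NA partial sums; none of that appears in your sketch. In fairness, the paper does not prove this inequality either --- it is imported verbatim from \cite[Theorem~3]{Sh} --- so there is no internal proof to compare against, but you should be explicit that you are citing the result rather than establishing it.

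Read as a derivation of Theorem~\ref{t:indep}, your argument is correct and essentially coincides with the paper's proof of that theorem: the same parameters $x=\eps n^{1/r}$, $\alpha=1/2$, $a=x/N$ (the paper fixes $N=8p/(2-r)$ up front, whereas you keep $K$ free and choose it at the end so that $c(K)>p/r-1$; both work), the same union bound plus Markov's inequality applied to $|X_i|^p f(|X_i|)$ for the truncation term with summability supplied by Fact~\ref{f:1}, and the same observation that $ax/B_n\gtrsim n^{2/r-1}\to\infty$ (using $r<2$ and $B_n\leq Cn$) converts the exponential term into $n^{-c(K)}$ with $c(K)$ linear in $K$. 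The \emph{delicate point} you flag is handled correctly: your exponent $c(K)=\alpha K(2/r-1)/12$ is the same asymptotic bookkeeping as the paper's $\bigl(-\frac{N}{6}(2/r-1)+o(1)\bigr)\log n$, weakened slightly to absorb the lower-order corrections.
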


 \begin{proof}[Proof of Theorem~\ref{t:indep}]
Fix $\eps>0$. By Fact~\ref{f:2} we have $\sup_{n\geq 1} \E(X_n^2)=C<\infty$. Thus $B_n=\sum_{i=1}^{n} \E(X_i^2)\leq Cn$ for all $n$. Let $N=8p/(2-r)$. 
Applying Theorem~\ref{t:Shao} for $n\in \N^+$, $x=\eps n^{1/r}$, $a=x/N$, and $\alpha=1/2$ we obtain that 
\begin{equation} \label{eq:AB} 
\P(M_n>\eps n^{1/r})\leq a_n+b_n,
\end{equation}
where 
\begin{align*} 
a_n&=2\P\left(\max_{1\leq i\leq n} |X_i|>\eps n^{1/r}/N\right), \\
b_n&=4 \exp \left(-\frac{\eps^2 n^{2/r}}{4(\eps^2 n^{2/r}/N+Cn)} \left(1+\frac{2}{3}
 \log \left( 1+\frac{\eps^2 n^{2/r}}{NCn}\right) \right) \right)
\end{align*}
Let $c=\eps/N$, by Markov's inequality we have 
\begin{align*} 
a_n&\leq 2\sum_{i=1}^{n} \P(|X_i|>cn^{1/r}) \\
&\leq  2\sum_{i=1}^{n}  \P(|X_i|^p f(|X_i|)\geq c^p n^{p/r}f(cn^{1/r})) \\
&\leq  2\sum_{i=1}^{n} \frac{\E(|X_i|^p f(|X_i|))}{c^p n^{p/r} f(cn^{1/r})} \\
&\lesssim \frac{n^{1-p/r}}{f(c n^{1/r})},
\end{align*}
thus Fact~\ref{f:1} implies that 
\begin{equation} \label{eq:A} 
\sum_{n=1}^{\infty} n^{p/r-2} a_n\lesssim 
\sum_{n=1}^{\infty} \frac{1}{n  f(c n^{1/r})}<\infty.
\end{equation}
Since $2/r>1$, easy calculation shows that 
\begin{equation*}
b_n=4\exp\left(\left(-\frac N6\left(\frac 2r -1\right)+o(1)\right)\log n \right) \quad \textrm{as } n\to \infty,
\end{equation*} 
so for all large enough $n$ we have 
\begin{equation*}
b_n\leq \exp(-(N/8)(2/r-1) \log n)=n^{-N(2-r)/(8r)}=n^{-p/r}.
\end{equation*}
Therefore 
\begin{equation} \label{eq:B} \sum_{n=1}^{\infty} n^{p/r-2} b_n<\infty.  
\end{equation} 
Clearly \eqref{eq:AB}, \eqref{eq:A}, and \eqref{eq:B} complete the proof. 
\end{proof} 
 
 \begin{corollary} \label{c:c} 
 	Let $f\colon [0,\infty)\to \R^+$ be a non-decreasing function such that 
 	\[\sum_{n=1}^{\infty} \frac{1}{f(2^n)}<\infty.\]
 	Let $0<r<1\leq p$, and let $\{X_n\}_{n\geq 1}$ be a sequence of 
 	\begin{enumerate}
 		\item pairwise NQD random variables if $1\leq p<2$,
 		\item negatively associated random variables if $p\geq 2$.
 	\end{enumerate} 
 	Assume that $\sup_{n\geq 1} \E(|X_n|^pf(|X_n|))<\infty$. Then for all $\eps>0$ we have \[\sum_{n=1}^{\infty} n^{p/r-2} \P(M_n>\eps n^{1/r})<\infty.\]
 \end{corollary}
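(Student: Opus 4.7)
The plan is to reduce to the centered case treated in Theorem~\ref{t:pind} (for $1\leq p<2$, pairwise NQD) and Theorem~\ref{t:indep} (for $p\geq 2$, negatively associated) by translating by the means. The hypothesis $r<1$ is what makes the deterministic shift $\sum_{i=1}^n \E(X_i)$ negligible at scale $n^{1/r}$.

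First I would observe that the moment bound automatically controls first moments. Since $f$ is non-decreasing with $f(0)>0$, we have $\sup_{n\geq 1}\E(|X_n|^p)\leq \sup_{n\geq 1}\E(|X_n|^p f(|X_n|))/f(0)<\infty$, and Jensen's inequality (using $p\geq 1$) gives a uniform bound $|\E(X_n)|\leq A$ for some $A\in \R^+$. Define $Y_n=X_n-\E(X_n)$, which is centered. The key structural point is that both pairwise NQD and negative association are preserved under coordinatewise translation: if $\varphi_1,\varphi_2$ are coordinatewise non-decreasing then so are $\psi_j(y_i)=\varphi_j(y_i+\E(X_i))$, and the covariance condition (resp.\ the NQD inequality) transfers verbatim. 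Hence $\{Y_n\}_{n\geq 1}$ is a centered sequence of the same dependence type as $\{X_n\}_{n\geq 1}$.

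Next I would verify that $\sup_{n\geq 1} \E(|Y_n|^p f(|Y_n|))<\infty$. By Corollary~\ref{c:f} and Fact~\ref{f:2} we may assume $\lim_{n\to\infty} f(2^{n+1})/f(2^n)=1$, which easily yields a constant $K\in \R^+$ with $f(2x)\leq K f(x)$ for all $x$ sufficiently large. Using $|Y_n|\leq |X_n|+A$, on the set $\{|X_n|\geq A\}$ we have $|Y_n|\leq 2|X_n|$, hence $|Y_n|^p f(|Y_n|)\leq 2^p K |X_n|^p f(|X_n|)$ once $|X_n|$ is large enough. On the complementary event $|Y_n|\leq 2A$, so $|Y_n|^p f(|Y_n|)$ is bounded by a constant. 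Taking expectations yields $\sup_{n\geq 1}\E(|Y_n|^p f(|Y_n|))<\infty$.

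Now let $T_k=\sum_{i=1}^k Y_i$ and $M_n^T=\max_{1\leq k\leq n}|T_k|$. Since $r<1\leq p$, in the range $1\leq p<2$ we have $r\neq p$, so Theorem~\ref{t:pind} applies and gives
\[\sum_{n=1}^{\infty} n^{p/r-2}\P(M_n^T>\delta n^{1/r})<\infty\]
for every $\delta>0$; the same conclusion follows from Theorem~\ref{t:indep} when $p\geq 2$. Finally, $S_k=T_k+\sum_{i=1}^k \E(X_i)$ implies $M_n\leq M_n^T+nA$, and since $r<1$ we have $nA=o(n^{1/r})$, so there exists $n_0$ such that $nA\leq (\eps/2)n^{1/r}$ for all $n\geq n_0$. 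Thus for these $n$,
\[\{M_n>\eps n^{1/r}\}\subset \{M_n^T>(\eps/2) n^{1/r}\},\]
and the desired summability of $n^{p/r-2}\P(M_n>\eps n^{1/r})$ follows. There is no serious obstacle here; the only technical point is verifying that the moment condition transfers from $\{X_n\}$ to $\{Y_n\}$, which is why the slow-growth of $f$ guaranteed by Corollary~\ref{c:f} is invoked.
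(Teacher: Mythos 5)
Your proof is correct, and while it reaches the same end-state as the paper's --- centering via $Y_n=X_n-\E(X_n)$ and then invoking Theorem~\ref{t:pind} or Theorem~\ref{t:indep} --- the route differs in one substantive way. The paper first replaces $X_n$ by $|X_n|$, so that $M_n=S_n$; with the summands non-negative and $0\leq\E(X_n)\leq K$, one gets $|Y_n|\leq\max\{K,X_n\}$, and the moment transfer follows from monotonicity of $x\mapsto x^p f(x)$ alone, with no slow-growth input needed. You instead keep the signs, bound $M_n\leq M_n^T+nA$, and absorb the deterministic drift by using $r<1$, so that $nA=o(n^{1/r})$; the price is that your moment-transfer step needs Corollary~\ref{c:f}'s slow growth of $f$ to control $f(|X_n|+A)$ by $f(|X_n|)$. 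The trade is favourable: the paper's opening replacement implicitly requires that $\{|X_n|\}$ inherit the pairwise NQD or NA hypothesis, which fails in general (for instance $X_2=-X_1$ is NA, yet $|X_1|$ and $|X_2|$ coincide, hence are positively dependent), whereas your shift-by-means argument sidesteps this, since NA and pairwise NQD are preserved under deterministic coordinatewise translations. A minor bookkeeping difference: at $p=1$ the paper invokes Theorem~\ref{t:main}~\eqref{b1}, while you use Theorem~\ref{t:pind} with $r<1=p$, hence $r\neq p$; both are valid.
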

 
 \begin{proof} Fix $\eps>0$ arbitrarily. We may assume that $X_n\geq 0$ almost surely for all $n$, otherwise we replace $X_n$ by $|X_n|$. Thus $M_n=S_n$. Fact~\ref{f:2} and $p\geq 1$ imply that $\sup_{n\geq 1} \E(X_n)=K<\infty$. For all $n\in \N^+$ define  
 	\[Y_n=X_n-\E(X_n).\]
 	Clearly if $\{X_n\}_{n\geq 1}$ is pairwise NQD/negatively associated then $\{Y_n\}_{n\geq 1}$ is also pairwise NQD/negatively associated. As $|Y_{n}|\leq \max\{K,X_n\}$, the monotonicity of the function $x\mapsto x^p f(x)$ implies that almost surely for all $n\in \N^+$ we have
 	\[|Y_n|^p f(|Y_n|)\leq K^p f(K)+X_n^pf(X_n),\]
 	so 
 	\[\sup_{n\geq 1} \E(|Y_n|^p f(|Y_n|))\leq 
 	K^p f(K)+\sup_{n\geq 1} \E(X_n^p f(X_n))<\infty.\]
 	
 	Define 
 	$T_n=\sum_{i=1}^{n} Y_i$. For all $n\geq (2K/\eps)^{r/(1-r)}$ we have 
 	\begin{align}
 	\begin{split} \label{s:st}  
 	\P(M_n>\eps n^{1/r})&=\P(S_n>\eps n^{1/r})\\ 
 	&\leq \P(T_n>\eps n^{1/r}-Kn) \\
 	&\leq \P(T_n>(\eps/2) n^{1/r}) \\
 	&\leq \P(|T_n|>(\eps/2) n^{1/r}).
 	\end{split}
 	\end{align}
 Applying Theorem~\ref{t:main}~\eqref{b1} if $p=1$, Theorem~\ref{t:pind} if $1< p<2$, and Theorem~\ref{t:indep} if $p\geq 2$ for $\{Y_n\}_{n\geq 1}$ yields that \[\sum_{n=1}^{\infty} n^{p/r-2} \P(|T_n|>(\eps/2) n^{1/r})<\infty,\] so \eqref{s:st} finishes the proof.  
 \end{proof}
 
 The following lemma is due to Nash~\cite{N}, 
 which gives a necessary and sufficient condition for $\P(\limsup_{n\to \infty} A_n)=1$ in terms of conditional probabilities.

 \begin{lemma}[Nash] \label{l:BC} Let $\{A_n\}_{n\geq 1}$ be events and define $\iH\subset \{0,1\}^{\N^+}$ such that 
 	\begin{align*} 
 	\iH=\{&(\alpha_1,\alpha_2,\dots ):\, \alpha_n=1 \textrm{ only for finitely many } n \\
 	&\textrm{and } \P(I(A_1)=\alpha_1,\dots, I(A_{n})=\alpha_{n})>0 \textrm{ for all } n\}.
 	\end{align*} 
 	Then $\P(\limsup_{n\to \infty} A_n)=1$ if and only if for all $(\alpha_1,\alpha_2,\dots)\in \iH$ we have 
 	\[\sum_{n=2}^{\infty} \P(A_n \, |\, I(A_1)=\alpha_1,\dots, I(A_{n-1})=\alpha_{n-1})=\infty.\]  
 \end{lemma}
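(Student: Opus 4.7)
The idea is to reformulate the limsup condition in terms of complements and apply the chain rule for conditional probability, reducing the problem to a classical Borel--Cantelli style infinite product. Writing $\P(\limsup_n A_n) = 1$ as $\P(\bigcap_{n \geq N} A_n^c) = 0$ for every $N \in \N^+$, I would decompose each such tail intersection by conditioning on the initial indicator pattern $(I(A_1), \ldots, I(A_{N-1}))$ and summing over those $\vec{\alpha} = (\alpha_1, \ldots, \alpha_{N-1}) \in \{0,1\}^{N-1}$ for which $\P(B_{\vec{\alpha}}) > 0$, where $B_{\vec{\alpha}} = \{I(A_1) = \alpha_1, \ldots, I(A_{N-1}) = \alpha_{N-1}\}$.

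For the forward direction, suppose toward contradiction that some $(\alpha_n) \in \iH$ satisfies $\sum_{n=2}^{\infty} p_n < \infty$, where $p_n = \P(A_n \mid I(A_1) = \alpha_1, \ldots, I(A_{n-1}) = \alpha_{n-1})$. Choose $N$ so large that $\alpha_n = 0$ for all $n \geq N$. The positivity condition built into $\iH$ forces $p_n < 1$ for every $n \geq N$, since $p_n = 1$ would imply $\P(I(A_1) = \alpha_1, \ldots, I(A_n) = \alpha_n) = 0$. The chain rule then yields
\[
\P\!\left(B_{\vec{\alpha}} \cap \bigcap_{n=N}^{M} A_n^c\right) = \P(B_{\vec{\alpha}}) \prod_{n=N}^{M} (1 - p_n),
\]
and for $p_n \in [0,1)$ the convergence of $\sum p_n$ is equivalent to $\prod (1-p_n) > 0$. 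Letting $M \to \infty$ produces $\P(\bigcap_{n \geq N} A_n^c) > 0$, contradicting the hypothesis.

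For the converse, fix $N$ and $\vec{\alpha}$ with $\P(B_{\vec{\alpha}}) > 0$, and define $q_n = \P(A_n \mid B_{\vec{\alpha}} \cap A_N^c \cap \cdots \cap A_{n-1}^c)$ for as long as the conditioning event has positive probability. Either some $q_n$ equals $1$, in which case $\P(B_{\vec{\alpha}} \cap A_N^c \cap \cdots \cap A_n^c) = 0$ and hence $\P(B_{\vec{\alpha}} \cap \bigcap_{n \geq N} A_n^c) = 0$ trivially; or every $q_n$ is defined and strictly less than $1$, in which case the sequence $(\alpha_1, \ldots, \alpha_{N-1}, 0, 0, \ldots)$ lies in $\iH$. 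Applying the hypothesis to this sequence gives $\sum_n q_n = \infty$, so $\prod_n (1-q_n) = 0$ and again the tail intersection vanishes. Summing over the finitely many $\vec{\alpha}$ produces $\P(\bigcap_{n \geq N} A_n^c) = 0$ for every $N$, hence $\P(\limsup_n A_n) = 1$.

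The main obstacle is careful bookkeeping around the well-definedness of the conditional probabilities and the characterization of $\iH$. One must verify that the finite histories excluded from $\iH$ by failure of positivity contribute zero automatically, while those surviving can be extended by zeros to elements of $\iH$ where the hypothesis applies. The case split on whether some intermediate $q_n$ equals $1$ is the delicate point, but both sub-cases force the tail intersection to have measure zero, cleanly completing the partition argument.
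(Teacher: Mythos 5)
Your argument is correct. The paper itself does not prove Lemma~\ref{l:BC} but simply cites Nash's original 1954 paper, so there is no in-paper argument to compare against; your proof supplies the missing derivation.

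A couple of remarks confirming the delicate points you flagged. In the forward direction, your observation that $p_n<1$ for $n\geq N$ is exactly what guarantees the partial products $\prod_{n=N}^{M}(1-p_n)$ are strictly positive, and then $\sum p_n<\infty$ together with $p_n\to 0$ gives $\sum\log(1-p_n)>-\infty$, hence a positive limiting product; combined with $\P(B_{\vec\alpha})>0$ (part of membership in $\iH$) this yields $\P\bigl(\bigcap_{n\geq N}A_n^c\bigr)>0$, contradicting $\P(\limsup_n A_n)=1$. In the converse, the decomposition $\bigcap_{n\geq N}A_n^c=\bigsqcup_{\vec\alpha\in\{0,1\}^{N-1}}\bigl(B_{\vec\alpha}\cap\bigcap_{n\geq N}A_n^c\bigr)$ is a finite partition, patterns with $\P(B_{\vec\alpha})=0$ contribute nothing, and for the remaining patterns your dichotomy (some $q_n=1$ kills the tail outright, versus all $q_n<1$ so the extension by zeros lies in $\iH$ and the hypothesis forces $\prod(1-q_n)=0$) is exhaustive and correct. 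The one small thing worth stating explicitly is that for $2\leq n<N$ the intermediate cylinder events $\{I(A_1)=\alpha_1,\dots,I(A_n)=\alpha_n\}$ contain $B_{\vec\alpha}$ and hence automatically have positive probability, which is needed to verify the $\iH$-membership of $(\alpha_1,\dots,\alpha_{N-1},0,0,\dots)$; you implicitly use this but it deserves a sentence. With that noted, the proof is complete and is the natural chain-rule argument for this extension of the second Borel--Cantelli lemma.
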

 
The construction in the following theorem dates back to Chung~\cite[Theorem~2]{Ch}, but our proof is more involved. 

 \begin{theorem} \label{t:counter} Let $f\colon [0,\infty)\to \R^+$ be a non-decreasing function such that \[\sum_{n=1}^{\infty} \frac{1}{f(2^n)}=\infty.\]
 	Let $0<r<2$ and let $p\geq r$. Then there exists a sequence of independent, centered random variables $\{X_n\}_{n\geq 1}$
 	such that $\sup_{n\geq 1} \E(|X_n|^p f(|X_n|))<\infty$ and
 	\[\sum_{n=1}^{\infty} n^{p/r-2} \P(|S_n|> n^{1/r})=\infty.\]
 	Moreover, if $r=p$ then $\limsup_{n\to \infty} n^{-1/p}S_n\geq 1$ almost surely. 
 \end{theorem}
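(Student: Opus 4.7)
The plan is to construct $\{X_n\}_{n\ge 1}$ in dyadic blocks. For $k\in\N^+$ let $B_k=\{n:2^k\le n<2^{k+1}\}$, fix a constant $c>2^{1/r}$, and set $a_k=c\cdot 2^{k/r}$ (so that $a_k>n^{1/r}$ for every $n\in B_k$) and $p_k=1/(a_k^p f(a_k))$. Since $a_k\to\infty$ and $f\ge f(0)>0$, we may pick $k_0$ so that $p_k<1$ for $k\ge k_0$. Put $X_n\equiv 0$ for $n<2^{k_0}$, and for $k\ge k_0$ and $n\in B_k$ take the $X_n$ mutually independent with $\P(X_n=a_k)=\P(X_n=-a_k)=p_k/2$ and $\P(X_n=0)=1-p_k$. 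The resulting sequence is independent and symmetric, hence centered, and $\E(|X_n|^p f(|X_n|))=p_k a_k^p f(a_k)\le 1$.

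For the divergence of $\sum n^{p/r-2}\P(|S_n|>n^{1/r})$ I would use the L\'evy-type inequality
\[
\P(|S_n|>t)\ge\tfrac12\,\P\bigl(\max_{1\le i\le n}|X_i|>t\bigr),
\]
valid for independent symmetric summands: letting $\tau$ be the first index with $|X_i|>t$, the remainder $R_\tau=S_n-X_\tau$ is symmetric and independent of $X_\tau$ conditional on $\tau$, so its sign matches that of $X_\tau$ with conditional probability at least $1/2$, giving $|S_n|\ge|X_\tau|>t$. Apply this with $t=n^{1/r}$ to $n\in B_k$ with $n\ge 3\cdot 2^{k-1}$: then $|B_k\cap[1,n]|\ge 2^{k-1}$, so
\[
\P(|S_n|>n^{1/r})\gtrsim 1-(1-p_k)^{2^{k-1}}\gtrsim\min\{2^k p_k,1\}.
\]
Summing $n^{p/r-2}$ over these $\asymp 2^{k-1}$ values of $n$, and using that $p\ge r$ and $f\ge f(0)$ force $2^k p_k=2^{k(1-p/r)}/(c^p f(c\cdot 2^{k/r}))$ to be bounded, the contribution from block $B_k$ is of order $2^{k(p/r-1)}\cdot 2^k p_k=2^{kp/r}p_k=1/(c^p f(c\cdot 2^{k/r}))$, and Fact~\ref{f:1} together with $\sum 1/f(2^n)=\infty$ gives $\sum_k 1/f(c\cdot 2^{k/r})=\infty$, proving the claim.

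For the moreover part assume $r=p$ and set $n_k=2^{k+1}-1$, $T_k=\sum_{i\in B_k}X_i$, $E_k=\{\text{exactly one }i\in B_k\text{ has }X_i\ne 0\}$, and $A_k=\{|S_{n_k}|>n_k^{1/r}\}$. The events $E_k$ are measurable with respect to the disjoint $\sigma$-algebras $\sigma(X_i:i\in B_k)$ and therefore independent, with $\P(E_k)=2^k p_k(1-p_k)^{2^k-1}\asymp 2^k p_k\asymp 1/f(c\cdot 2^{k/r})$, so $\sum_k\P(E_k)=\infty$ by Fact~\ref{f:1}. On $E_k$ the block sum $T_k$ equals $\pm a_k$ each with conditional probability $1/2$, independently of $\iF_{k-1}=\sigma(X_i:i\le n_{k-1})$; the event that $\operatorname{sign}(T_k)=\operatorname{sign}(S_{n_{k-1}})$ has conditional probability at least $1/2$, and on it $|S_{n_k}|\ge a_k>n_k^{1/r}$. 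Therefore $\P(A_k\mid\iF_{k-1})\ge\P(E_k)/2$, and since $\{I(A_j)=\alpha_j:j<k\}\in\iF_{k-1}$, taking conditional expectations gives $\P(A_k\mid I(A_1)=\alpha_1,\dots,I(A_{k-1})=\alpha_{k-1})\ge\P(E_k)/2$ for every $(\alpha_1,\alpha_2,\dots)\in\iH$. Nash's lemma (Lemma~\ref{l:BC}) then yields $\P(A_k\text{ i.o.})=1$ and hence $\limsup_{n\to\infty}|S_n|/n^{1/p}\ge 1$ almost surely; the symmetry of $\{X_n\}$ and Kolmogorov's $0$--$1$ law applied to the tail event $\{\limsup S_n/n^{1/p}\ge 1\}$ then upgrade this to $\limsup_{n\to\infty}n^{-1/p}S_n\ge 1$ almost surely.

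The main obstacle is the moreover claim: the events $A_k$ are not independent because they share the prefix $S_{n_{k-1}}$, so unconditional Borel--Cantelli does not apply. The key trick is the sign-matching step, which absorbs the $\iF_{k-1}$-dependence into the conditioning and allows the independent events $E_k$ (whose probabilities sum to infinity) to drive the divergence of the conditional probabilities required by Nash's lemma.
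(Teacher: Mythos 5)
Your proof is correct, and it follows the same broad strategy as the paper — a sparse symmetric atom sequence built in geometric blocks, with the ``moreover'' part driven by Nash's conditional Borel--Cantelli lemma — but it differs in two places worth noting. For the divergence of $\sum n^{p/r-2}\P(|S_n|>n^{1/r})$, the paper computes directly the probability that \emph{exactly one} summand in the half-block is nonzero and carries the right sign, absorbing a $(1-2p_k)^{4^{k-1}}$ factor into a constant; you instead invoke a L\'evy-type maximal inequality $\P(|S_n|>t)\ge\frac12\P(\max_i|X_i|>t)$ for independent symmetric summands, which packages the same mechanism more cleanly. For the ``moreover'' part, the paper works with one-sided events $A_k=\{S_{4^k-1}\ge 4^{k/p}\}$ and asserts the conditional lower bound by ``repeating the argument'' from the two-sided estimate \eqref{eq:scond}; this step implicitly requires the prefix $S_{4^{k-1}-1}$ to be nonnegative under the conditioning, which is not obvious — on the event that $\alpha_{k-1}=0$ (which is forced infinitely often on $\iH$) the prefix is only bounded above, not below, so the single-atom jump of $+4^{k/p}$ need not produce $S_{4^k-1}\ge 4^{k/p}$. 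Your route avoids this issue: you use the two-sided events $A_k=\{|S_{n_k}|>n_k^{1/r}\}$, for which the sign-matching step genuinely gives $\P(A_k\mid\iF_{k-1})\ge\frac12\P(E_k)$ regardless of the sign of the prefix, obtain $\limsup_n|S_n|/n^{1/p}\ge 1$ a.s.\ from Nash's lemma, and then upgrade to $\limsup_n S_n/n^{1/p}\ge 1$ via the symmetry of the law and the Kolmogorov zero--one law. This extra step makes your argument both cleaner and more watertight than the paper's.
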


 \begin{proof} For all $k\in \N^+$ let
 	\[p_k=\frac{4^{-kp/r}}{f(4^{k/r})}.\]
 	Since $4^{-kp/r}\leq 4^{-k}$, for $c=\exp(-3/f(4^{1/r}))$ we can fix $k_0\in \N^{+}$ such that
 	for all $k\geq k_0$ we have $p_k<1/2$ and
 	\begin{equation} \label{eq:pk} (1-2p_k)^{4^k}\geq c.
 	\end{equation}
 	
 	We define a sequence of independent random variables $\{X_n\}_{n\geq 1}$
 	as follows. Let $X_n\equiv 0$ for all $n<4^{k_0}$. If $4^{k-1}\leq n<4^{k}$ for some integer $k>k_0$ then let
 	\[\P(X_n=4^{k/r})=\P(X_n=-4^{k/r})=p_k \quad \textrm{and} \quad \P(X_n=0)=1-2p_k.\]
 	Then $\{X_n\}_{n\geq 1}$ is a sequence of independent, centered random variables such that $\sup_{n\geq 1} \E(|X_n|^p f(|X_n|))=2$.
 	Fix $k>k_0$ and $2\cdot 4^{k-1} \leq n<4^{k}$. 
 	We will prove that 
 	\begin{equation} \label{eq:sn4} 
 	\P(|S_n|>n^{1/r}) \geq c4^{k-1} p_k.
 	\end{equation} 
 	Let us use the notation $S=S_{n-4^{k-1}}$ and fix an arbitrary $x\in \R$ with $\P(S=x)>0$. By the law of total probability in order to prove \eqref{eq:sn4} it is enough to show that 
 	\begin{equation} \label{eq:scond} 
 	\P(|S_n|>n^{1/r} \, | \, S=x)\geq c 4^{k-1} p_k.
 	\end{equation}  
 	As $4^{k/r}>n^{1/r}$, we have either $x+4^{k/r}>n^{1/r}$ or $x-4^{k/r}<-n^{1/r}$. By symmetry we may assume that $x+4^{k/r}>n^{1/r}$.  Thus the independence of $S$ and $S_n-S$, and \eqref{eq:pk} yield that 
 	\begin{align*} 	
 	\P(|S_n|>n^{1/r} \, | \, S=x)&\geq \P(S_n-S=4^{k/r} \, | \, S=x)=\P(S_n-S=4^{k/r}) \\
 	&\geq 4^{k-1} p_k (1-2p_k)^{4^{k-1}-1}\geq c4^{k-1}p_k,
 	\end{align*}
 	where only the sequences with $4^{k-1}-1$ zeros were taken into account.
 	This implies \eqref{eq:scond}, hence \eqref{eq:sn4} holds. 
 	Inequality~\eqref{eq:sn4} and Fact~\ref{f:1} yield that
 	\begin{align*} \sum_{n=1}^{\infty} n^{p/r-2} \P(|S_n|> n^{1/r}) &\geq \sum_{k>k_0} \sum_{2\cdot 4^{k-1}\leq n< 4^k} n^{p/r-2} \P(|S_n|>n^{1/r}) \\
 	&\geq \sum_{k>k_0} (2\cdot 4^{k-1}) (4^{(k-1)(p/r)} 4^{-2k})c4^{k-1} p_k \\
 	&=c2^{-2p/r-3} \sum_{k>k_0}  \frac{1}{f(4^{k/r})}=\infty.
 	\end{align*}
 	This proves the first claim. 
 	
 	Now assume that $p=r$. For all $k\in \N^+$ let us define the event 
 	\[A_k=\{S_{4^{k}-1}\geq 4^{k/p}\}.\]
 	Fix arbitrary $k>k_0$ and $(\alpha_1,\dots, \alpha_{k-1})\in \{0,1\}^{k-1}$ such that 
 	\[\P(I(A_1)=\alpha_1,\dots,I(A_{k-1})=\alpha_{k-1})>0.\]Repeating the argument of the proof of \eqref{eq:scond} for fixed values of $X_1,\dots, X_{4^{k-1}-1}$ and using the law of total probability for conditional probabilities we obtain that  
 	\begin{align}
 	\begin{split} \label{sp:I} 
 	\P(A_k \, | \, I(A_1)=\alpha_1,\dots, I(A_{k-1})=\alpha_{k-1})&\geq 3\cdot 4^{k-1}p_k (1-2p_k)^{3\cdot 4^{k-1}-1} \\
 	&\geq 3c4^{k-1} p_k\gtrsim \frac{1}{f(4^{k/p})}.
 	\end{split}
 	\end{align} 	
 	Fact~\ref{f:1} implies that $\sum_{k> k_0} 1/f(4^{k/p})=\infty$, so \eqref{sp:I} and Lemma~\ref{l:BC} yield that 
 	$\P(\limsup_{k\to \infty} A_k)=1$. Thus $\limsup_{n\to \infty} n^{-1/p} S_n\geq 1$ almost surely. The proof is complete.   
 \end{proof}

\end{document}